\def\er{{\mathbb R}}
\def\Ex{{\mathbb E}}
\def\N{{\mathbb N}}
\def\IC{\mathrm{IC}}
\def\CI{\mathrm{CI}}
\def\CWSM{\mathrm{CWSM}}
\def\Med{\mathrm{Med}}
\def\R{{\er}}
\def\mi{{\mu}}
\def\ra{{\rightarrow}}
\def\is#1#2{\left\langle #1 , #2 \right\rangle}
\def\BB{\mathcal{B}}
\def\cpn{{r_{p,n}}}
\def\1{{\mathbf{1}}}
\def\LL{\mathcal{L}}
\def\npn{{\nu_p^n}}
\def\n2n{{\nu_2^n}}
\def\nn{{\nu^n}}
\def\mpn{{\mu_{p,n}}}
\def\np1{{\nu_p}}
\def\cppb2{{C_5}}
\newtheorem{thm}{Theorem}[section]
\newtheorem{prop}[thm]{Proposition}
\newtheorem{lem}[thm]{Lemma}
\newtheorem{cor}[thm]{Corollary}
\newtheorem{remark}[thm]{Remark}
\newtheorem{conj}{Conjecture}
\newtheorem{Def}[thm]{Definition} %%\numberwithin{equation}{subsection}
\title{On the infimum convolution inequality
\thanks{Partially supported by the Foundation for Polish Science and MEiN Grant 1 PO3A 012 29}
\footnote{Keywords: infimum convolution, concetration, log--concave measure, isoperimetry, $\ell_p^n$ ball}
\footnote{2000 Mathematical Subject Classification: 52A20 (52A40, 60E15)}}
\author{R.\ Lata{\l}a and J.\ O.\ Wojtaszczyk}
\date{}
\begin{document}

\maketitle

\begin{abstract}
In the paper we study the {\em infimum convolution} inequalites. Such an inequality was
first introduced by B. Maurey to give the optimal concentration of measure behaviour for
the product exponential measure. We show how {\em IC} inequalities are tied to concentration and
study the optimal cost functions for an arbitrary probability measure $\mi$. In particular,
we show the optimal {\em IC} inequality for product log--concave measures and for uniform
measures on the $\ell_p^n$ balls. Such an optimal inequality implies, for a given measure,
in particular the Central Limit Theorem of Klartag and the tail estimates of Paouris.
\end{abstract}

\section{Introduction and Notation}

In the seminal paper \cite{Ma}, B.\ Maurey introduced the so called
property $(\tau)$ for a probability measure $\mi$ with a cost function $\varphi$ 
(see Definition \ref{propertyTauDefinition} below)
and established a very elegant and simple proof of Talagrand's
two level concentration for the product exponential distribution $\nu^n$ using $(\tau)$ for this
distribution and an appropriate cost function $w$.

It is natural to ask what other pairs $(\mu,\varphi)$ have 
property $(\tau)$? As any $\mu$ satisfies $(\tau)$ with $\varphi \equiv 0$,
one will rather ask how big a cost function can one take.
 In this paper we study the probability measures $\mu$
that have property $(\tau)$ with respect  to the largest
(up to a multiplicative factor) possible convex cost function $\Lambda^\star_\mu$.
This bound comes from checking property $(\tau)$ for linear
functions. We say a measure satisfies the {\em infimum convolution inequality}
($IC$ for short) if the pair $(\mu, \Lambda^\star_\mu)$ satisfies $\tau$.

It turns out that such an optimal infimum convolution inequality has very strong consequences.
It gives the best possible concentration behaviour, governed by the so--called $L_p$-centroid bodies (Corollary \ref{IC_and_CI}).
This, in turn, implies in particular a
weak--strong moment comparison (Proposition \ref{weakstr}), the Central Limit Theorem of Klartag \cite{Kl} and the tail estimates
estimates of Paouris \cite{Pa} (Proposition \ref{conseq}). We believe that $IC$ holds for
any log--concave probability measure, which is the main motivation for this paper.

Maurey's inequality for the exponential measure is of this optimal type.
We transport this to any log--concave measure on the real line, and as the inequality
tensorizes, any product log--concave measure satisfies $IC$ (Corollary \ref{SIClogprod}). 
However, the main challenge is to provide non--product examples of measures satisfying $IC$. 
We show how such an optimal result can be obtained from concentration inequalites, and follow
on to prove $IC$ for the uniform measure on any $\ell_p^n$ ball for $p \geq 1$ (Theorem \ref{glownyWynik}).

With the techniques developed we also prove a few other results. We give a proof
of the log--Sobolev inequality for $\ell_p^n$ balls, where $p \geq 2$ (Theorem \ref{logSobolev}) 
and provide a new concentration inequality for the exponential measure for sets lying far away from 
the origin (Theorem \ref{PushAndPop}).

{\bf Organization of the paper.}
This section, apart from the above introduction, defines the notation used
throughout the paper. The second section is devoted to studying the general
properties of the inequality $IC$. In subsection 2.1 we recall the definition
of property $(\tau)$ and its ties to concentration from \cite{Ma}. In subsection
2.2 we study the opposite implication --- what additional assumptions one needs
to infer $(\tau)$ from concentration inequalities. In subsection 2.3 we show
that $\Lambda^\star_\mi$ is indeed the largest possible cost function and 
define the inequality $IC$. In subsection 2.4 we show that product log--concave
measures satisfy $IC$.

In the third section we give more attention to the concentration inequalities
tied to $IC$. In subsection 3.1 we show the connection to $\mathcal{Z}_p$ bodies.
In subsection 3.2 we continue in this vein with the additional assumption our
measure is $\alpha$--regular. In subsection 3.3 we show how $IC$ implies a comparison
of weak and strong moments and the results of \cite{Kl} and \cite{Pa}.

In the fourth section we give a modification of the two--level concentration for the exponential measure, in which
for sets lying far away from the origin only an enlargement by $t B_1^n$ is used. This
will be used in the fifth section, which focuses on the uniform measure on the $B_p^n$ ball. In 
subsection 5.1 we define and study two rather standard transports of measure used further on.
In subsection 5.2 we use these transports along with the concentration from section 4 and
a Cheeger inequality from \cite{So} to give a proof of $IC$ for $p \leq 2$. In section 5.3
we show a proof of $IC$ for $p \geq 2$ and a proof of the log--Sobolev inequality for $p \geq 2$.

We conclude with a few possible extensions of the results of the paper in the sixth section.
\medskip

{\bf Notation.} By $\langle\cdot,\cdot\rangle$ we denote the standard scalar
product on $\er^n$. For $x\in \er^n$ we put
$\|x\|_p=(\sum_{i=1}^n|x_i|^p)^{1/p}$ for $1\leq p< \infty$ and
$\|x\|_{\infty}=\max_{i}|x_i|$, we also use $|x|$ for $\|x\|_2$.
We set $B_p^n$ for a unit ball in $l^n_p$, i.e..\
$B_p^n=\{x\in\er^n\colon \|x\|_p\leq 1\}$.

By $\nu$ we denote the symmetric exponential distribution on $\er$,
i.e.\ the probability measure with the density $\frac{1}{2}\exp(-|x|)$.
For $p\geq 1$, $\nu_p$ is the probability distribution on $\er$ with 
the density $(2\gamma_p)^{-1}\exp(-|x|^p)$, where $\gamma_p=\Gamma(1+1/p)$, 
in particular $\nu_1=\nu$. For a probability measure $\mu$ we write $\mu^n$
for a product measure $\mu^{\otimes n}$, thus $\nu_p^n$ has the 
density $(2\gamma_p)^{-n}\exp(-\|x\|_p^p)$.

For a Borel set $A$ in $\er^n$ by $|A|$ or $\lambda_n(A)$ we mean the Lebesgue measure of $A$.
We choose numbers $\cpn$ in such a way that $|\cpn B_p^n|=1$ and
by $\mu_{p,n}$ denote the uniform distribution on $B_p^n$.

The letters $c, C$ denote absolute numerical constants, which may change from line to line. By $c(p), C(p)$ we mean constants dependent on $p$ (or, formally, a family of absolute constants indexed by $p$), these also may change from line to line. Other letters, in particular greek letters, denote constants fixed for a given proof or section. For any sets of positive real numbers $a_i$ and $b_i$, $i \in I$, by $a_i \sim b_i$ we mean there exist absolute numerical constants $c, C > 0$ such that $ca_i < b_i < Ca_i$ for any $i \in I$. Similarly for collections of sets $A_i$ and $B_i$ by $A_i \sim B_i$ we mean $c A_i \subset B_i \subset C A_i$ for any $i \in I$, where again $c, C > 0$ are absolute numerical constants. By $\sim_p$ we mean the constants above can depend on $p$.

\section{Infimum convolution inequality}

\subsection{Property $(\tau)$}

\begin{Def} 
\label{propertyTauDefinition}
Let $\mu$ be a probability measure on $\er^n$ and
$\varphi\colon\er^n\rightarrow [0,\infty]$ be a measurable
function. We say that the pair
$(\mu,\varphi)$ has {\em property $(\tau)$} if
for any bounded  measurable function $f\colon\er^n\rightarrow \er$,
\begin{equation}
\label{in_infconv}
\int_{\er^n}e^{f\Box \varphi}d\mu\int_{\er^{n}}e^{-f}d\mu\leq 1, 
\end{equation}
where for two functions $f$ and $g$ on $\er^n$,
$$ 
f\Box g(x):=\inf\{f(x-y)+g(y)\colon y\in\er^n\} 
$$
denotes the {\em infimum convolution} of $f$ and $g$.
\end{Def}

The following two easy observations are almost immediate (c.f.\ \cite{Ma}):

\begin{prop}[Tensorization]
\label{tensMaur}
If pairs $(\mu_i,\varphi_i)$, $i=1,\ldots,k$ have property $(\tau)$  and
$\varphi(x_1,\ldots,x_k)=\varphi_1(x_1)+\ldots+\varphi_k(x_k)$,
then the couple $(\otimes_{i=1}^{k}\mu_i,\varphi)$ also has property $(\tau)$.
\end{prop}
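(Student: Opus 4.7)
\textbf{Proof plan for Proposition \ref{tensMaur}.} By induction it suffices to treat $k=2$; say $\mu_1$ lives on $\R^{n_1}$, $\mu_2$ on $\R^{n_2}$, and $\varphi(x_1,x_2)=\varphi_1(x_1)+\varphi_2(x_2)$. Fix a bounded measurable $f\colon \R^{n_1+n_2}\to\R$. The plan is to integrate out the $x_2$-variable first by invoking property $(\tau)$ for $(\mu_2,\varphi_2)$ on each slice, reduce the problem to a one-variable $(\tau)$-statement on $\R^{n_1}$, and then finish with property $(\tau)$ for $(\mu_1,\varphi_1)$.

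Concretely, define the partial infimum convolution
\[
g(x_1,x_2):=\inf_{y_2\in\R^{n_2}}\bigl[f(x_1,x_2-y_2)+\varphi_2(y_2)\bigr],
\]
and the log-integrals $\tilde g(x_1):=\log\int e^{g(x_1,x_2)}d\mu_2(x_2)$ and $\tilde f(x_1):=-\log\int e^{-f(x_1,x_2)}d\mu_2(x_2)$. Applying $(\tau)$ for $(\mu_2,\varphi_2)$ to the slice $f(x_1,\cdot)$ yields
\[
\tilde g(x_1)\leq \tilde f(x_1)\qquad \text{for every } x_1.
\]
The next observation is the associativity of inf convolution in coordinates: splitting the double infimum in two stages gives
\[
(f\Box\varphi)(x_1,x_2)=\inf_{y_1}\bigl[g(x_1-y_1,x_2)+\varphi_1(y_1)\bigr].
\]
Since $e^{\inf}=\inf e^{(\cdot)}$ and $\int(\inf_y h_y)\,d\mu_2\leq\inf_y\int h_y\,d\mu_2$, integrating in $x_2$ first yields, for every $x_1$,
\[
\int e^{(f\Box\varphi)(x_1,x_2)}d\mu_2(x_2)\leq\inf_{y_1}\Bigl[e^{\varphi_1(y_1)}\int e^{g(x_1-y_1,x_2)}d\mu_2(x_2)\Bigr]=e^{(\tilde g\Box\varphi_1)(x_1)}.
\]

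Now I would integrate the last display against $\mu_1$, apply property $(\tau)$ for $(\mu_1,\varphi_1)$ to $\tilde g$, and use $\tilde g\leq\tilde f$ (hence $\int e^{-\tilde g}d\mu_1\geq \int e^{-\tilde f}d\mu_1=\int e^{-f}d(\mu_1\otimes\mu_2)$) to conclude
\[
\int e^{f\Box\varphi}d(\mu_1\otimes\mu_2)\leq \int e^{\tilde g\Box\varphi_1}d\mu_1\leq \Bigl(\int e^{-\tilde g}d\mu_1\Bigr)^{-1}\leq\Bigl(\int e^{-f}d(\mu_1\otimes\mu_2)\Bigr)^{-1},
\]
which is exactly inequality \eqref{in_infconv} for $(\mu_1\otimes\mu_2,\varphi)$.

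The only nontrivial step is identifying the right order of operations: integrate against $\mu_2$ first, apply $(\tau)_2$ to produce $\tilde g\leq\tilde f$, then apply $(\tau)_1$. Everything else is Fubini and the elementary inequality $\int \inf\leq\inf\int$. Measurability of $g$ and $\tilde g$ is handled standardly by a soft approximation of $f$ by simple functions (or by replacing the infimum over $\R^{n_2}$ with an infimum over a countable dense subset, relying on mild regularity of $\varphi_2$); since the proposition is quoted as ``almost immediate,'' I would treat this point briefly and refer to \cite{Ma} for the technical details.
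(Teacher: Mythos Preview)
Your argument is correct and is precisely the standard tensorization proof from \cite{Ma} that the paper defers to (the paper gives no proof of its own, calling the result ``almost immediate''). The key steps---slicewise application of $(\tau)$ for $\mu_2$, the elementary $\int\inf\leq\inf\int$, and a final application of $(\tau)$ for $\mu_1$---are exactly Maurey's.
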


\begin{prop}[Transport of measure]
\label{tau_trans}
Suppose that $(\mu,\varphi)$ has  property $(\tau)$ and
$T\colon \er^n\rightarrow\er^m$ is such that
$$ \psi(Tx-Ty)\leq \varphi(x-y) \mbox{ for all } x,y\in \er^n. $$ 
Then the pair $(\mu\circ T^{-1},\psi)$ has property $(\tau)$.
\end{prop}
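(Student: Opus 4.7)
The plan is to pull the target inequality for $(\mu\circ T^{-1},\psi)$ back to $\R^n$ through $T$ and then apply the assumed property $(\tau)$ for $(\mu,\varphi)$ to the composed test function. Concretely, I would fix a bounded measurable $g\colon\R^m\to\R$ and use the change of variables identity $\int_{\R^m} h\, d(\mu\circ T^{-1}) = \int_{\R^n}(h\circ T)\, d\mu$ to rewrite \eqref{in_infconv} for the pair $(\mu\circ T^{-1},\psi)$ as
\begin{equation*}
\int_{\R^n} e^{(g\Box\psi)\circ T}\, d\mu \cdot \int_{\R^n} e^{-g\circ T}\, d\mu \leq 1.
\end{equation*}
Setting $f:=g\circ T$, which is bounded and measurable on $\R^n$, the right-hand factor already matches the form appearing in property $(\tau)$ for $(\mu,\varphi)$ applied to $f$.

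The heart of the argument is the pointwise inequality
\begin{equation*}
(g\Box\psi)(Tx) \leq (f\Box\varphi)(x) \qquad \text{for every } x\in\R^n.
\end{equation*}
I would verify it by, for an arbitrary $y\in\R^n$, plugging $z:=Tx-T(x-y)$ into the infimum defining the left side. Then $g(Tx-z) = g(T(x-y)) = f(x-y)$, while the transport hypothesis gives $\psi(z) = \psi(Tx-T(x-y)) \leq \varphi(x-(x-y)) = \varphi(y)$. Summing and taking the infimum over $y\in\R^n$ yields the claim.

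Integrating the exponentiated pointwise bound against $\mu$ and multiplying by $\int e^{-f}\, d\mu$ then gives
\begin{equation*}
\int e^{(g\Box\psi)\circ T}\, d\mu \cdot \int e^{-f}\, d\mu \leq \int e^{f\Box\varphi}\, d\mu \cdot \int e^{-f}\, d\mu \leq 1,
\end{equation*}
where the final step is property $(\tau)$ for $(\mu,\varphi)$ applied to $f$. This is precisely the pulled-back form of the desired inequality for $(\mu\circ T^{-1},\psi)$. The argument is essentially bookkeeping; the only point requiring care is the choice of the witness $z$ above, which is designed so that the transport hypothesis applies with the correct arguments. I expect no real analytic obstacle.
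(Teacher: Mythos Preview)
Your argument is correct and is exactly the standard route: pull back via change of variables, establish the pointwise comparison $(g\Box\psi)(Tx)\le (f\Box\varphi)(x)$ using the transport hypothesis, and then invoke property $(\tau)$ for $(\mu,\varphi)$. The paper does not give its own proof of this proposition (it is stated as an ``almost immediate'' observation with a reference to Maurey), so there is nothing substantive to compare against; your write-up supplies precisely the details one would expect.
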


Maurey noticed that property $(\tau)$ implies
$\mu(A+B_{\varphi}(t))\geq 1-\mu(A)^{-1}e^{-t},$
where
$$
B_{\varphi}(t):=\{x\in \er^n\colon \varphi(x)\leq t\}.
$$
We will need a slight modification of this estimate.

\begin{prop}
\label{ICconc}
Property $(\tau)$ for $(\varphi,\mu)$ implies for any Borel set
$A$ and $t\geq 0$,
\begin{equation}
\label{conc} 
\mu(A+B_{\varphi}(t))\geq \frac{e^t\mu(A)}{(e^t-1)\mu(A)+1}.
\end{equation}
In particular for all $t>0$,
\begin{align}
\label{concsmall}
\mu(A)>0&\ \Rightarrow\
\mu(A+B_{\varphi}(t))> \min\{e^{t/2}\mu(A),1/2\},
\\
\label{conclarge}
\mu(A)\geq 1/2 &\ \Rightarrow\
1-\mu(A+B_{\varphi}(t))<e^{-t/2}(1-\mu(A))
\end{align}
and 
\begin{equation}
\label{concall}
\mu(A)=\nu(-\infty,x]\ \Rightarrow\
\mu(A+B_{\varphi}(t))\geq \nu(-\infty,x+t/2].
\end{equation}
\end{prop}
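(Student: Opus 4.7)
The plan is to derive the main inequality (\ref{conc}) from a single application of property $(\tau)$ to a well-chosen two-valued test function, and then to extract (\ref{concsmall})--(\ref{concall}) as one-variable algebraic consequences.

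Write $B := A + B_{\varphi}(t)$, $X := \mu(A)$, $Y := \mu(B\setminus A)$, $Z := \mu(B^c)$, so that $X+Y+Z=1$. I apply $(\tau)$ with
\[
f(x) := t \cdot \1_{A^c}(x),
\]
i.e.\ $f$ equal to $0$ on $A$ and $t$ on its complement. The key point is that $f \Box \varphi$ can be bounded in opposite directions on $B$ and on $B^c$. On $B$ the defining decomposition $x=a+y$ with $a\in A$ and $\varphi(y)\leq t$ gives $f\Box\varphi(x)\leq t$, and on $A$ the choice $y=0$ refines this to $f\Box\varphi(x)\leq 0$, so $e^{f\Box\varphi}\geq 1$ throughout $B$. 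On $B^c$, any $y$ with $x-y\in A$ satisfies $\varphi(y)>t$ by the definition of $B$, while any $y$ with $x-y\notin A$ contributes $f(x-y)+\varphi(y)\geq t$; combined with the reverse bound from $y=0$ this yields $f \Box \varphi \equiv t$ on $B^c$. Consequently,
\[
\int e^{f\Box\varphi}\,d\mu \geq 1 + (e^t-1)Z, \qquad \int e^{-f}\,d\mu = X + e^{-t}(1-X),
\]
and property $(\tau)$ gives $[1+(e^t-1)Z][e^tX+1-X]\leq e^t$, which expands using $X+Y+Z=1$ into $(e^t-1)XZ\leq Y$. This is the rearrangement $\mu(B)/[1-\mu(B)]\geq e^t\mu(A)/[1-\mu(A)]$, i.e.\ precisely (\ref{conc}).

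The three remaining inequalities follow by studying the minorant $G(p):=e^tp/[(e^t-1)p+1]$ that (\ref{conc}) delivers. The critical value $p_*:=1/(e^{t/2}+1)$ is the common solution of $G(p)=e^{t/2}p$ and $G(p)=1/2$; a short calculation gives $G(p)>e^{t/2}p$ for $p<p_*$ and $G(p)>1/2$ for $p\geq p_*$, which together yield (\ref{concsmall}) with strict inequality whenever $\mu(A)>0$. Since $\mu(A)\geq 1/2>p_*$ automatically sits above the threshold, the identity $1-G(p)=(1-p)/[1+(e^t-1)p]$ immediately gives (\ref{conclarge}). For (\ref{concall}) one writes $F(p):=\nu((-\infty,\nu^{-1}(p)+t/2])$ piecewise using the explicit CDF of the symmetric exponential (it equals $e^{t/2}p$ on $p\leq e^{-t/2}/2$, $1-1/(4pe^{t/2})$ on the intermediate range, and $1-(1-p)e^{-t/2}$ on $p\geq 1/2$) and checks $G\geq F$ in each region. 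The outer two regions recover (\ref{concsmall}) and (\ref{conclarge}); the middle one reduces to nonnegativity of a quadratic in $p$ whose discriminant factors as $(e^{t/2}-1)^2(e^t-6e^{t/2}+1)$, and this is settled by a short sign analysis in $s:=e^{t/2}$.

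The main step requiring care is the pointwise identity $f\Box\varphi\equiv t$ on $B^c$, since it uses the \emph{strict} inequality $\varphi(y)>t$ for $x\notin A+B_\varphi(t)$; once this pointwise statement is secured, everything else is algebraic bookkeeping. (Throughout I tacitly take $\varphi(0)=0$, which is forced by applying (\ref{in_infconv}) to $f\equiv 0$ and can otherwise be arranged by translating the minimizer of $\varphi$ to the origin.)
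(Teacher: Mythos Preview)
Your derivation of (\ref{conc}) is the paper's: the same test function $f=t\,\1_{A^c}$, the same pointwise bounds on $f\Box\varphi$, the same algebra. Two small wrinkles in the write-up: the clause ``on $A$ the choice $y=0$ refines this to $f\Box\varphi(x)\leq 0$, so $e^{f\Box\varphi}\geq 1$'' has the implication backwards---what you actually need is the \emph{lower} bound $f\Box\varphi\geq 0$, which is immediate from $f,\varphi\geq 0$ and makes the $\varphi(0)=0$ assumption unnecessary; and $p_*=1/(e^{t/2}+1)$ solves $G(p)=e^{t/2}p$ but \emph{not} $G(p)=1/2$ (that one is $1/(e^t+1)$), though your subsequent claims $G(p)>e^{t/2}p$ for $p<p_*$ and $G(p)>1/2$ for $p\geq p_*$ are correct and suffice. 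Your treatment of (\ref{concsmall}) and (\ref{conclarge}) is likewise the paper's.

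The genuine difference is in (\ref{concall}). You do a direct three-region comparison of $G$ against the shifted exponential CDF, which in the middle region $e^{-t/2}/2<p<1/2$ leads to a quadratic in $p$ and a discriminant computation. This works (when the discriminant $(s-1)^2(s^2-6s+1)$ is positive one has $s>3+2\sqrt{2}$, and then the product of the roots is $1/(4s)>0$ while their sum $(4s+1-s^2)/(4s)$ is negative, so both roots are negative and the quadratic stays nonnegative on $(0,\infty)$), but it is heavier than necessary. The paper instead exploits the semigroup identities $f_{t+s}=f_t\circ f_s$ and $g_{t+s}=g_t\circ g_s$, where $g_t(p)=F(F^{-1}(p)+t)$: since (\ref{concsmall}) and (\ref{conclarge}) already give $f_t\geq g_{t/2}$ whenever the trajectory stays on one side of $p=1/2$, one simply splits $t$ at the crossing time and composes, using the monotonicity of $f_t$ and $g_t$. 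This bypasses the middle-region calculation entirely and is worth knowing.
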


\begin{proof}
Take $f(x) = t \1_{\R^n \setminus A}$. Then $f(x)$ is non--negative on
$\R^n$, so $f\Box \varphi$ is non--negative (recall that by definition we
consider only nonnegative cost functions). For $x \not\in A + B_\varphi(t)$
we have $f \Box \varphi (x) = \inf_y(f(y) + \varphi(x-y))\geq t$, 
for either
$y \not \in A$, and then $f(y) = t$, or $y \in A$, and then $\varphi(x-y)
\geq t$ as $x \not \in A + B_\varphi(t)$.

Thus from property $(\tau)$ for $f$ we have
\begin{align*} 
1 & \geq \int e^{f\Box \varphi (x)} d\mi(x) \int e^{-f(x)}
d\mi(x) 
\\ 
& \geq
\Big[\mi\big(A + B_\varphi(t)\big) + 
e^t\big(1 - \mi(A + B_\varphi(t))\big)\Big]\big[\mi(A)
+ e^{-t} (1-\mi(A))\big],
\end{align*}
from which, extracting the condition upon $\mi(A +B_\varphi(t))$ by direct
calculation, we get (\ref{conc}).

Let $f_t(p):=e^tp/((e^t-1)p+1)$, notice that
$f_t$ is increasing in $p$ and for $p\leq e^{-t/2}/2$,
$$
(e^{t}-1)p+1\leq e^{t/2}+1-\frac{1}{2}(e^{t/2}+e^{-t/2})<e^{t/2},
$$
hence $f_t(p)>\min(e^{t/2}p,1/2)$ and (\ref{concsmall}) follows.
Moreover for $p\geq 1/2$
$$
1-f_t(p)=\frac{1-p}{(e^t-1)p+1}\leq
\frac{1-p}{(e^t+1)/2}<e^{-t/2}(1-p)
$$
and we get (\ref{conclarge}).

Let $F(x)=\nu(-\infty,x]$ and $g_t(p)=F(F^{-1}(p)+t)$.
Previous calculations show that for $t,p>0$, 
$f_t(p)\geq g_{t/2}(p)$ if $F^{-1}(p)+t/2\leq 0$ or 
$F^{-1}(p)\geq 0$. Since $g_{t+s}=g_t\circ g_s$ and
$f_{t+s}=f_t\circ f_s$, we get that $f_t(p)\geq g_{t/2}(p)$ for
all $t,p>0$, hence (\ref{conc}) implies (\ref{concall}).  
\end{proof}

The main theorem of \cite{Ma} states that $\nu$ satisfies  ($\tau$) with
a sufficiently chosen cost function.

\begin{thm}
\label{inf_nu}
Let $w(x)=\frac{1}{36}x^2$ for $|x|\leq 4$ and $w(x)=\frac{2}{9}(|x|-2)$
otherwise. Then the pair $(\nu^n,\sum_{i=1}^{n}w(x_i))$ has  property 
$(\tau)$.
\end{thm}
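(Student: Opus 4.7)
The first move is a straightforward reduction to dimension one. Since $\nu^n = \nu^{\otimes n}$ is a product measure and $\sum_{i=1}^n w(x_i)$ splits as a sum of one-variable functions, iterating Proposition~\ref{tensMaur} $n-1$ times shows it suffices to prove that the pair $(\nu, w)$ has property $(\tau)$ on $\R$: for every bounded measurable $f: \R \to \R$,
\begin{equation*}
\int_\R e^{f \Box w}\, d\nu \cdot \int_\R e^{-f}\, d\nu \le 1.
\end{equation*}

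For the one-dimensional inequality I would follow Maurey's argument from \cite{Ma}. Writing $g = f \Box w$, the naive pointwise bound $g(x) - f(y) \le w(x-y)$ only yields a double-integral bound $\iint e^{w(x-y)}\, d\nu(x)\, d\nu(y)$, which turns out to be an absolute constant strictly larger than $1$, so finer information is needed. The right approach exploits the explicit density $\tfrac12 e^{-|x|}$ together with the fact that the infimum $g(x) = \inf_y (f(y) + w(x-y))$ forces $g$ well below $f$ wherever $f$ oscillates significantly. After reducing $f$ by approximation and a rearrangement step to a simple (essentially piecewise monotone) form, one analyses the product of integrals by splitting the infimum convolution according to the two regimes of $w$: in the quadratic regime $|x-y|\le 4$, the contribution is controlled by a Gaussian-type estimate tied to the constant $1/36$; in the linear regime $|x-y|>4$, one uses that the slope $2/9$ of $w$ is strictly below the decay rate $1$ of the exponential density, producing an absolutely convergent tail contribution.

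The main obstacle is the precise numerical calibration of the constants $1/36$, $2/9$, and the break-point $|x|=4$. Testing property $(\tau)$ against linear $f(x)=\lambda x$ and using $\int e^{\lambda x}\,d\nu(x) = (1-\lambda^2)^{-1}$ for $|\lambda|<1$ already forces the asymptotic slope of $w$ to be strictly below $1$; the choice $2/9$ is tuned so that the quadratic piece $x^2/36$ and the linear piece $\frac{2}{9}(|x|-2)$ match to $C^1$ at $|x|=4$ (indeed $16/36=4/9=(2/9)(4-2)$ and $w'(4^-)=8/36=2/9=w'(4^+)$), and so that the explicit integrals produced by the case split sum to exactly $1$. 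Carrying out this explicit optimization is the substance of Maurey's proof in \cite{Ma}, and I would invoke that computation directly rather than redo the numerical bookkeeping; the conceptual content of my own plan is confined to the tensorization reduction and the setup above.
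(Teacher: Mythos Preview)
The paper does not give its own proof of this theorem: it is quoted verbatim as ``the main theorem of \cite{Ma}'' and used as a black box. Your proposal ultimately does the same thing --- the tensorization reduction to $n=1$ via Proposition~\ref{tensMaur} is correct, and then you explicitly defer the one-dimensional inequality to Maurey's paper. So at the level of what is actually being \emph{proved}, you and the paper agree.

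That said, your sketch of how the one-dimensional case goes in \cite{Ma} is not accurate, and if you tried to fill it in as written you would get stuck. Maurey does \emph{not} argue by rearranging $f$ into a piecewise-monotone form and then splitting the product of integrals according to the two regimes of $w$. His route is indirect: he first proves property $(\tau)$ for the standard Gaussian $\gamma$ with the quadratic cost $x\mapsto x^2/4$ (this is a Pr\'ekopa--Leindler / semigroup argument), and then pushes it forward to $\nu$ via the increasing transport $T$ with $\nu=\gamma\circ T^{-1}$, exactly in the spirit of Proposition~\ref{tau_trans}. The specific constants $1/36$, $2/9$ and the break-point $4$ arise from a pointwise estimate of the form $w(Tx-Ty)\le (x-y)^2/4$, i.e.\ from controlling how much $T$ can stretch intervals, not from any explicit integral calibration of the type you describe. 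Your remark that the slope $2/9$ is ``tuned so that the explicit integrals sum to exactly $1$'' is therefore misleading; there is no such identity in Maurey's proof.
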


Theorem \ref{inf_nu} together with Proposition \ref{ICconc} immediately
gives the following two-level concentration:
\begin{equation}
\label{conc_nu}
\nu^{n}(A)=\nu(-\infty,x]\ \Rightarrow\ 
\forall_{t\geq 0}\ \nu^{n}(A+6\sqrt{2t}B_2^n+18tB_1^n)
\geq \nu(-\infty,x+t],
\end{equation}
that was first established (with different universal, rather large
constants) by Talagrand \cite{Ta}.

\subsection{From concentration to property $(\tau)$}

Proposition \ref{ICconc} shows that property $(\tau)$ implies
concentration, the next result presents the first approach to the converse
implication.

\begin{cor}
\label{firstappr}
Suppose that the cost function $\varphi$ is radius-wise nondecreasing,
$\mu$ is a Borel probability measure on $\er^n$ and $\beta>0$ is such that
for any $t>0$ and $A\in {\cal B}(\er^n)$,
\begin{equation}
\label{mu_and_nu}
\mu(A)=\nu(-\infty,x] \Rightarrow
\mu(A+\beta B_{\varphi}(t))\geq \nu(-\infty,x+\max\{t,\sqrt{t}\}].
\end{equation}
Then the pair $(\mu,\frac{1}{36}\varphi(\frac{\cdot}{\beta}))$ 
has property $(\tau)$.
In particular if $\varphi$ is convex, symmetric and $\varphi(0)=0$
then (\ref{mu_and_nu}) implies property $(\tau)$
for $(\mu,\varphi(\frac{\cdot}{36\beta}))$.
\end{cor}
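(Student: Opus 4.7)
The plan is to reduce the $n$-dimensional inequality to the one-dimensional exponential case, for which property $(\tau)$ is already available (Theorem \ref{inf_nu} with $n = 1$). Given a bounded measurable $f\colon \er^n \to \er$, I would introduce its monotone rearrangement $g\colon \er \to \er$ with respect to $\mu$ and $\nu$, i.e.\ the unique non-decreasing function satisfying $\mu(f \leq s) = \nu(g \leq s)$ for every $s \in \er$; this is well defined since the cdf of $\nu$ is continuous. By construction $\int e^{-f}\,d\mu = \int e^{-g}\,d\nu$, so it suffices to prove the companion bound $\int e^{f \Box \varphi'}\,d\mu \leq \int e^{g \Box w}\,d\nu$, where $\varphi' := \tfrac{1}{36}\varphi(\,\cdot\,/\beta)$; combining with property $(\tau)$ for $(\nu, w)$ then closes the loop.

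To obtain the companion bound I would establish the stochastic dominance $\mu(f \Box \varphi' \leq s) \geq \nu(g \Box w \leq s)$ for every $s \in \er$. The monotonicity of $g$, together with the symmetry and radial monotonicity of $w$, gives an explicit form for the lower level sets of $g \Box w$:
\[
\{g \Box w \leq s\} = \bigl(-\infty,\ \sup_{s' \leq s}(g^{-1}(s') + r_{s-s'})\bigr],
\]
where $r_t$ denotes the radius of $B_w(t)$. On the $\mu$-side, the trivial inclusion $\{f \leq s'\} + B_{\varphi'}(s-s') \subseteq \{f \Box \varphi' \leq s\}$, combined with the identity $B_{\varphi'}(t) = \beta B_\varphi(36t)$ and the hypothesis (\ref{mu_and_nu}) applied to $A = \{f \leq s'\}$ (which satisfies $\mu(A) = \nu(-\infty, g^{-1}(s')]$), yields
\[
\mu(f \Box \varphi' \leq s) \geq \nu\bigl(-\infty,\ g^{-1}(s') + \max\{36(s-s'),\ 6\sqrt{s-s'}\}\bigr].
\]
A short piecewise computation verifies $r_t \leq \max\{36 t,\ 6\sqrt{t}\}$ for all $t \geq 0$ (the quadratic regime $r_t = 6\sqrt{t}$ of $w$ matches the $\sqrt{\,\cdot\,}$ branch, while the linear regime $r_t = \tfrac{9}{2}t + 2$ is absorbed by $36 t$). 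Taking the supremum over $s' \leq s$ and using continuity of the cdf of $\nu$ produces the stochastic domination, and an application of property $(\tau)$ for $(\nu, w)$ gives
\[
\int e^{f \Box \varphi'}\,d\mu \cdot \int e^{-f}\,d\mu \leq \int e^{g \Box w}\,d\nu \cdot \int e^{-g}\,d\nu \leq 1.
\]

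The main obstacle is essentially that numerical matching of regimes: pinning down both the quadratic and linear pieces of $w$ simultaneously is what forces the factor $\tfrac{1}{36}$ in $\varphi'$. For the final \emph{in particular} assertion I would use that when $\varphi$ is convex with $\varphi(0) = 0$, the inequality $\varphi(c\,\cdot) \leq c\,\varphi(\cdot)$ holds for every $c \in [0,1]$, and hence $\varphi(x/(36\beta)) \leq \tfrac{1}{36}\varphi(x/\beta)$. Since $f \Box \varphi$ is monotone in $\varphi$, property $(\tau)$ is inherited by any pointwise smaller cost function, so the first part immediately yields $(\tau)$ for $(\mu, \varphi(\,\cdot\,/(36\beta)))$.
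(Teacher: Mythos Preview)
Your proposal is correct and follows essentially the same route as the paper: reduce to the one-dimensional exponential case via a monotone rearrangement $g$ of $f$, establish stochastic domination of the sublevel sets of $f\Box\varphi'$ by those of $g\Box w$ using the inclusion $\{f\le s'\}+B_{\varphi'}(s-s')\subset\{f\Box\varphi'\le s\}$ together with hypothesis (\ref{mu_and_nu}) and the piecewise bound on $w$, and then invoke Theorem~\ref{inf_nu}. The paper carries out the same argument with strict sublevel sets $\{f<s\}$ and the slightly sharper estimate $r_t\le\max\{6\sqrt{t},9t\}$, but these are cosmetic differences; your handling of the ``in particular'' clause via $\varphi(c\,\cdot)\le c\,\varphi$ for $c\in[0,1]$ is also exactly what the paper does.
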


\begin{proof}
Let us fix $f\colon \er^n \rightarrow \er$. For any
measurable function $h$ on $\er^k$ and $t\in\er$ we put
$$
A(h,t):=\{x\in \er^k\colon h(x)< t\}.
$$
Let $g$ be a nondecreasing right-continuous 
function on $\er$ such that $\mu(A(f,t))=\nu(A(g,t))$. Then the distribution 
of $g$ with respect to $\nu$ is the same as the distribution of $f$ with 
respect to $\mu$ and thus
$$
\int_{\er^n}e^{-f(x)}d\mu(x)=\int_{\er}e^{-g(x)}d\nu(x).
$$
To finish the proof of the first assertion, 
by Theorem \ref{inf_nu} it is enough to show
that
$$ 
\int_{\er^n}e^{f\Box \frac{1}{36}\varphi(\frac{\cdot}{\beta})}d\mu\leq
\int_\er e^{g \Box w}d\nu.
$$
We will establish stronger property:
$$
\forall_{u}\ \mu\bigg(A\Big(f\Box
\frac{1}{36}\varphi\Big(\frac{\cdot}{\beta}\Big),u\Big)\bigg)
\geq \nu(A(g\Box w,u)).
$$
Since the set $A(g\Box w,u)$ is a halfline, it is enough to prove that
\begin{equation}
\label{aim1}
g(x_1)+w(x_2)<u\ \Rightarrow \
\mu\bigg(A\Big(f\Box \frac{1}{36}\varphi\Big(\frac{\cdot}{\beta}\Big),u\Big)\bigg)
\geq \nu(-\infty,x_1+x_2].
\end{equation}
Let us fix $x_1$ and $x_2$ with $g(x_1)+w(x_2)<u$ and take $s_1>g(x_1)$
$s_2=w(x_2)$ with $s_1+s_2<u$. Put $A:=A(f,s_1)$, then
$\mu(A)=\nu(A(g,s_1))\geq \nu(-\infty,x_1]$. By the definition of
$w$ it easily follows that $x_2\leq \max\{6\sqrt{s_2},9 s_2\}$,
hence by (\ref{mu_and_nu}), $\mu(A+\beta B_{\varphi}(36s_2))
\geq \nu(-\infty,x_1+x_2].$
Since
$$
A+\beta B_{\varphi}(36s_2)=
A(f,s_1)+B_{\varphi(\frac{\cdot}{\beta})/36}(s_2)
\subset A\bigg(f\Box \frac{1}{36}\varphi
\Big(\frac{\cdot}{\beta}\Big),s_1+s_2\bigg),
$$
we obtain the  property (\ref{aim1}).

The last part of the statement immediately follows since any
symmetric convex function $\varphi$ is radius-wise
nondecreasing and if additionally $\varphi(0)=0$, then
$\varphi(x/36)\leq \varphi(x)/36$ for any $x$.
\end{proof}

The next proposition shows that inequalities (\ref{concsmall}) and
(\ref{conclarge}) are strongly related.

\begin{prop}
\label{equiv}
The following two conditions are equivalent for any Borel set $K$ and
$\gamma>1$,\\
\begin{align}
\label{cond1}
\forall_{A\in {\cal B}(\er^n)}\ 
 &\mu(A)>0 \Rightarrow
  \mu(A+K)> \min\Big\{\gamma\mu(A),\frac{1}{2}\Big\},
\\
\label{cond2}
\forall_{\tilde{A}\in {\cal B}(\er^n)}\
  & \mu(\tilde{A})\geq \frac{1}{2} \Rightarrow
1-\mu(\tilde{A}-K)<\frac{1}{\gamma}(1-\mu(\tilde{A})).
\end{align}
\end{prop}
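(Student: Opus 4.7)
The plan is to prove both implications through a set-theoretic duality between the Minkowski operations $A\mapsto A+K$ and $\tilde A\mapsto \tilde A-K$. The central observation, valid for arbitrary sets $A$, $\tilde A$, $K$, is the equivalence
\[
(A+K)\cap\tilde A=\emptyset \iff A\cap (\tilde A-K)=\emptyset,
\]
since both assertions say precisely that no $a\in A$, $k\in K$ satisfy $a+k\in\tilde A$. This will allow me to pair each set appearing in one condition with its natural ``dual'' in the other.

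For the implication (\ref{cond1})$\Rightarrow$(\ref{cond2}), I would fix $\tilde A$ with $\mu(\tilde A)\geq 1/2$ and set $A:=(\tilde A-K)^c$. The duality immediately gives $A+K\subset \tilde A^c$, so $\mu(A+K)\leq 1-\mu(\tilde A)\leq 1/2$. Assuming $\mu(A)>0$ (the case $\mu(A)=0$, i.e.\ $\mu(\tilde A-K)=1$, is routine), condition (\ref{cond1}) applied to $A$ forces the minimum to be $\gamma\mu(A)$, since $\mu(A+K)\leq 1/2$ cannot strictly exceed $1/2$. The resulting chain $\gamma(1-\mu(\tilde A-K))=\gamma\mu(A)<\mu(A+K)\leq 1-\mu(\tilde A)$ then rearranges to (\ref{cond2}).

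For the converse direction, given $A$ with $\mu(A)>0$ I would set $\tilde A:=(A+K)^c$. If $\mu(\tilde A)<1/2$ then $\mu(A+K)>1/2$ and (\ref{cond1}) holds trivially. Otherwise (\ref{cond2}) applies to $\tilde A$, while the duality yields $A\subset (\tilde A-K)^c$; combining these gives
\[
\mu(A)\leq 1-\mu(\tilde A-K)<\gamma^{-1}(1-\mu(\tilde A))=\gamma^{-1}\mu(A+K),
\]
so $\mu(A+K)>\gamma\mu(A)$, and since $\mu(A+K)\leq 1/2$ we obtain exactly (\ref{cond1}).

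Since the argument reduces to a pure Minkowski--complement identity, I do not expect a substantial obstacle; the shape of the proof is dictated by the duality. The only delicate points are the boundary cases of the strict inequalities ($\mu(A)=0$ or $\mu(\tilde A)=1$), which are either vacuous or handled by the standard conventions, and the potential non-measurability of $A+K$ and $\tilde A-K$, which in the Borel setting may require passing to suitable inner/outer approximations but does not affect the logical structure of the argument.
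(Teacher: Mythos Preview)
Your proof is correct and follows essentially the same approach as the paper: both arguments exploit the duality $(A+K)\cap\tilde A=\emptyset \iff A\cap(\tilde A-K)=\emptyset$ with the identical choices $A=(\tilde A-K)^c$ and $\tilde A=(A+K)^c$; the only cosmetic difference is that the paper phrases each implication as a contradiction while you argue directly. The boundary and measurability caveats you flag are likewise not addressed in the paper's version.
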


\begin{proof} 
(\ref{cond1})$\Rightarrow$(\ref{cond2}). Suppose that
$\mu(\tilde{A})\geq 1/2$ and
$1-\mu(\tilde{A}-K)\geq \gamma^{-1}(1-\mu(\tilde{A}))$.
Let $A:=\er^n\setminus (\tilde{A}-K)$,
then $(A+K)\cap \tilde{A}=\emptyset$, so
$\mu(A+K)\leq 1/2$ and
$$
\mu(A+K)\leq 1-\mu(\tilde{A})\leq \gamma(1-\mu(\tilde{A}-K))=
\gamma\mu(A)
$$ 
and this contradicts (\ref{cond1}).

(\ref{cond2})$\Rightarrow$(\ref{cond1}). Let us take $A\in \er^n$ with
$\mu(A)>0$
such that $\mu(A+K)\leq \min\{\gamma\mu(A),1/2\}$. 
Let $\tilde{A}:=\er^n\setminus (A+K)$,
then $\mu(\tilde{A})\geq 1/2$. Moreover $(\tilde{A}-K)\cap A=\emptyset$,
thus
$$
1-\mu(\tilde{A}-K)\geq \mu(A)\geq \frac{1}{\gamma}\mu(A+K)=
\frac{1}{\gamma}(1-\mu(\tilde{A}))
$$
and we get the contradiction with (\ref{cond2}). 
\end{proof}

\begin{cor}
\label{large_t}
Suppose that $t>0$ and $K$ is a symmetric convex set in $\er^n$
such that
$$
\forall_{A\in {\cal B}(\er^n)}\ \mu(A)>0 \Rightarrow
\mu(A+K)>\min\{e^t\mu(A),1/2\}.
$$
Then for any Borel set $A$,
$$
\mu(A)=\nu(-\infty,x]\ \Rightarrow
\mu(A+2K)> \nu(-\infty,x+t].
$$
\end{cor}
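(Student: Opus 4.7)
The plan is to split the argument into three regimes according to how $x$ and $x+t$ sit relative to the median $0$ of $\nu$. Write $N(x) := \nu(-\infty,x]$, so $N(x) = \frac12 e^x$ for $x\leq 0$ and $N(x) = 1-\frac12 e^{-x}$ for $x\geq 0$. Since $K$ is symmetric and convex, $0\in K$ (hence $A\subset A+K\subset A+2K$), $-K=K$, and by convexity $K+K=2K$. These set--theoretic identities are exactly what will turn two applications of the hypothesis (each giving one $K$) into the single $2K$ enlargement appearing in the conclusion.

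If $x+t\leq 0$, then $e^t\mu(A) = \frac12 e^{x+t}\leq \frac12$, so the hypothesis applied to $A$ directly gives $\mu(A+2K)\geq \mu(A+K) > e^t\mu(A) = N(x+t)$, and one step suffices.

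If $x\geq 0$ then $\mu(A)\geq \frac12$. The hypothesis is precisely condition (\ref{cond1}) of Proposition \ref{equiv} with $\gamma=e^t$, so condition (\ref{cond2}) is in force; applied to $A$ it yields $1-\mu(A-K) < e^{-t}(1-\mu(A)) = \frac12 e^{-(x+t)} = 1-N(x+t)$, and symmetry of $K$ turns $A-K$ into $A+K\subset A+2K$.

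The remaining case $x<0<x+t$ is the one where a single application of either inequality falls short and the $2K$ enlargement is genuinely needed. Here $e^t\mu(A) = \frac12 e^{x+t} > \frac12$, so the hypothesis only gives $\mu(A+K)>\frac12$. I would then feed $\tilde A := A+K$ into condition (\ref{cond2}) of Proposition \ref{equiv}, obtaining $1-\mu(\tilde A - K) < e^{-t}(1-\mu(\tilde A)) < \frac12 e^{-t}$. Since $-K=K$ and $K+K=2K$, the set $\tilde A - K$ equals $A+K+K = A+2K$, and $x<0$ gives $\frac12 e^{-t}\leq \frac12 e^{-(x+t)} = 1-N(x+t)$, so $\mu(A+2K) > N(x+t)$ as required. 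I do not expect a deep obstacle; the only real subtlety is this middle case, where one uses the small--set inequality (\ref{cond1}) to cross the median and then the large--set inequality (\ref{cond2}) to push the complement down, with the identity $K+K=2K$ (which is where convexity enters) accounting for the factor $2$ in the enlargement.
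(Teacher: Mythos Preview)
Your proof is correct and follows essentially the same three--case split as the paper's own argument: apply the hypothesis directly when $x+t\le 0$, invoke Proposition~\ref{equiv} when $x\ge 0$, and in the intermediate case first use the hypothesis to push $\mu(A+K)$ past $1/2$ and then apply the large--set inequality to $A+K$. The paper phrases the middle case as ``by the previous case applied to $A+K$'' rather than citing condition~(\ref{cond2}) directly, but this is the same computation; your explicit unpacking of the identities $-K=K$ and $K+K=2K$ makes the role of symmetry and convexity clearer than the paper's more compressed version.
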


\begin{proof}
Let us fix the set $A$ with $\mu(A)=\nu(-\infty,x]$. Notice that
$A+2K=A+K+K\supset A+K$.
If $x+t\leq 0$, then $\mu(A+K)>e^t\mu(A)=\nu(-\infty,x+t]$.
If $x\geq 0$,
Proposition \ref{equiv} gives
$$
\mu(A+K)>1-e^{-t}(1-\mu(A))=\nu(-\infty,x+t].
$$
Finally, if $x\leq 0\leq x+t$, we get $\mu(A+K)\geq 1/2=\nu(-\infty,0]$,
hence by the previous case,
$$
\mu(A+2K)=\mu((A+K)+K)> \nu(-\infty,t]\geq \nu(-\infty,x+t].
$$
\end{proof}

Corollary \ref{large_t} shows that if the cost function
$\varphi$ is symmetric and convex, condition (\ref{mu_and_nu}) (with
$2\beta$ instead of $\beta$) for $t\geq 1$ is implied by the following: 
\begin{align}
\label{concsm}
\forall_{A\in{\cal B}(\er^n)}\ \mu(A)>0\ \Rightarrow\
\mu(A+\beta B_{\varphi}(t))>\min\{e^t\mu(A),1/2\}.
\end{align}

To treat the case $t\leq 1$ we will need Cheeger's version of the Poincar\'e
inequality.

We say that a probability measure $\mu$ on $\er^n$ satisfies
{\em Cheeger's inequality} with constant $\kappa$ if for any
Borel set $A$ 
\begin{equation}\label{cheeger}
\mu^{+}(A):=\liminf_{t\rightarrow 0+}\frac{\mu(A+tB_2^n)-\mu(A)}{t}
\geq \kappa \min \{\mu(A),1-\mu(A)\}.
\end{equation}
It is not hard to check that Cheeger's inequality (cf.\ \cite[Theorem 2.1]{BH2})
implies
$$
\mu(A)=\nu(-\infty,x]\ \Rightarrow\
\mu(A+tB_2^n)\geq \nu(-\infty,x+\kappa t].
$$

Finally, we may summarize this section with the following statement. 

\begin{prop}
\label{reduction}
Suppose that the cost function $\varphi$ is convex, symmetric with
$\varphi(0)=0$ and
$1\wedge\varphi(x)\leq (\alpha|x|)^2$ for all $x$. If 
the measure $\mu$ satisfies
Cheeger's inequality with the constant $\beta = 1\slash \delta$ and 
the condition (\ref{concsm}) is
satisfied for all $t\geq 1$ and $C=\gamma$ then $(\mu, \varphi(\cdot/C))$ 
has property $(\tau)$ with the constant
$C=36 \min\{2\gamma, \alpha\delta\}$.
\end{prop}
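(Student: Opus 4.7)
The plan is to invoke the second part of Corollary \ref{firstappr} (the version for convex symmetric $\varphi$ with $\varphi(0)=0$), for which it suffices to verify the hypothesis (\ref{mu_and_nu}) with some explicit constant $\beta_0$. The two assumptions of the proposition are tailored to two different regimes of $t$: the large-deviation hypothesis (\ref{concsm}) will supply (\ref{mu_and_nu}) for $t\geq 1$, while Cheeger's inequality together with the quadratic domination of $\varphi$ near the origin will handle $0<t<1$.

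For $t\geq 1$ the sublevel set $B_\varphi(t)$ is symmetric and convex (convexity and symmetry of $\varphi$), so (\ref{concsm}) with $\beta=\gamma$ puts us exactly in the setting of Corollary \ref{large_t} applied to $K=\gamma B_\varphi(t)$. That corollary then gives
$$
\mu(A)=\nu(-\infty,x]\ \Rightarrow\ \mu(A+2\gamma B_\varphi(t))>\nu(-\infty,x+t]=\nu(-\infty,x+\max\{t,\sqrt{t}\}],
$$
so (\ref{mu_and_nu}) holds with constant $2\gamma$ throughout the range $t\geq 1$.

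For $0<t<1$ the growth assumption $1\wedge\varphi(y)\leq(\alpha|y|)^2$ delivers the key inclusion $(\sqrt{t}/\alpha)B_2^n\subset B_\varphi(t)$: indeed, if $|y|\leq\sqrt{t}/\alpha$ then $1\wedge\varphi(y)\leq t<1$, which forces $\varphi(y)\leq t$. Cheeger's inequality with constant $1/\delta$, via the implication recorded immediately above the statement, gives $\mu(A+sB_2^n)\geq\nu(-\infty,x+s/\delta]$ whenever $\mu(A)=\nu(-\infty,x]$. Taking $s=\sqrt{t}\,\delta$ and combining with the inclusion above yields
$$
\mu(A+\alpha\delta B_\varphi(t))\geq\mu(A+\sqrt{t}\,\delta B_2^n)\geq\nu(-\infty,x+\sqrt{t}]=\nu(-\infty,x+\max\{t,\sqrt{t}\}],
$$
so (\ref{mu_and_nu}) holds with constant $\alpha\delta$ for $0<t<1$.

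Combining the two regimes, (\ref{mu_and_nu}) holds for every $t>0$ with $\beta_0=\max\{2\gamma,\alpha\delta\}$, and Corollary \ref{firstappr} then yields property $(\tau)$ for $(\mu,\varphi(\cdot/(36\beta_0)))$. The real obstacle is the small-$t$ regime, where Cheeger's inequality only produces Euclidean enlargements whereas the conclusion requires $B_\varphi$-enlargements; the hypothesis $1\wedge\varphi(y)\leq(\alpha|y|)^2$ is precisely what bridges this gap by embedding a Euclidean ball of the correct radius inside $B_\varphi(t)$ for small $t$. I note in passing that the argument gives the constant $36\max\{2\gamma,\alpha\delta\}$; the ``$\min$'' in the displayed statement of the proposition appears to be a typo for ``$\max$''.
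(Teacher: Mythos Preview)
Your proof is correct and follows essentially the same route as the paper: split into the regimes $t\geq 1$ (handled via Corollary~\ref{large_t} applied to $K=\gamma B_\varphi(t)$, which the paper invokes in the paragraph preceding the proposition) and $t<1$ (handled via Cheeger and the inclusion $\alpha B_\varphi(t)\supset\sqrt{t}B_2^n$), then feed the resulting (\ref{mu_and_nu}) into Corollary~\ref{firstappr}. Your observation that the constant should be $36\max\{2\gamma,\alpha\delta\}$ rather than $36\min\{2\gamma,\alpha\delta\}$ is also correct---the paper's proof itself writes ``$\min$'' at the combining step, which is indeed a typo.
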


\begin{proof}
Notice that $\alpha B_{\varphi}(t)\supset \sqrt{t}B_2^n$ for all $t<1$, hence
Cheeger's inequality implies that condition (\ref{mu_and_nu})
holds for $t<1$ with $C=\alpha\delta$. Therefore (\ref{mu_and_nu})
holds for all $t\geq 0$ with $C=\min\{2\gamma, \alpha\delta\}$ and the
assertion follows by Corollary \ref{firstappr}.
\end{proof}

\subsection{Optimal cost functions}\label{Optimal_Cost_Subsection}

A natural question arises: what other pairs $(\mu,\varphi)$ have property
($\tau$)? First we have to choose the right cost function. To do this
let us recall the following definitions.

\begin{Def} 
Let $f : \R^n \ra (-\infty,\infty]$. The {\em Legendre transform of $f$}, 
denoted $\LL f$ is defined by $\LL f(x) := \sup_{y \in \R^n} \{\is{x}{y} - f(y)\}$.
\end{Def}

The Legendre transform of any function is a convex function. 
If $f$ is convex and lower semi-continuous, then $\LL\LL f = f$, 
and otherwise $\LL \LL f \leq f$. In general, if $f \geq g$, then 
$\LL f \leq \LL g$. The Legendre transform satisfies 
$\LL (Cf) (x) = C \LL f (x\slash C)$ and if $g(x) = f(x\slash C)$, 
then $\LL g(x) = \LL f(Cx)$. For this and other properties of $\LL$,
cf.\ \cite{MT}. The Legendre transform has been previously used in the
context of convex geometry, see for instance \cite{AKM} and \cite{KM}.

\begin{Def} 
Let $\mi$ be a probability measure on $\er^n$. We define
$$ 
M_\mi(v) := \int_{\R^n} e^{\is{v}{x}} d\mi(x), 
\quad \Lambda_\mi(v) := \log M_\mi(v)
$$ 
and 
$$ 
\Lambda^{\star}_\mi(v) := \LL \Lambda_\mi(v)
=\sup_{u\in \er^n}\Big\{\is{v}{u}
-\ln\int_{\er^n} e^{\is{u}{x}}d\mu(x)\Big\}. 
$$ 
\end{Def}

The function $\Lambda^{\star}_\mi$ plays a crucial role in the theory of large
deviations cf. \cite{DS}.

\begin{remark} 
Let $\mi$ be a symmetric probability measure on $\R^n$ and let
$\varphi$ be a convex cost function such that $(\mi,\varphi)$ 
has property ($\tau$). 
Then 
$$
\varphi(v) \leq 2\Lambda^\star_\mi (v\slash 2)\leq 
\Lambda^\star_\mi(v).
$$
\end{remark}

\begin{proof}
Take $f(x) = \is{x}{v}$. Then
$$
f \Box \varphi(x) = \inf_y (f(x-y) + \varphi(y)) = 
\inf_y (\is{x-y}{v} + \varphi(y)) = \is{x}{v} - \LL \varphi(v).
$$ 
Property $(\tau)$ yields
$$
1 \geq \int e^{f\Box \varphi} d\mi \int e^{-f} d\mi = 
e^{-\LL \varphi(v)} \int e^{\is{x}{v}} d\mi \int e^{-\is{x}{v}}d\mi = 
e^{-\LL \varphi(v)} M^2_\mi(v),
$$
where the last equality uses the fact that $\mi$ is symmetric. 
Thus by taking the logarithm we get $\LL \varphi(v) \geq 2 \Lambda_\mi(v)$,
 and by applying the Legendre transform we obtain 
$\varphi(v) = \LL \LL \varphi(v) \leq 2 \Lambda^\star_\mi(v\slash 2).$
The inequality $2\Lambda^\star_\mi (v\slash 2)\leq \Lambda^\star_\mi(v)$
follows by the convexity of $\Lambda_\mu^\star$.
\end{proof}

The above remark motivates the following definition.

\begin{Def}
We say that a symmetric probability measure $\mu$ satisfies the
{\em infimum convolution inequality
with constant $\beta$} ($\IC(\beta)$ in short), if the pair 
$(\mu,\Lambda_{\mu}^{*}(\frac{\cdot}{\beta}))$ has property $(\tau)$.
\end{Def}

\begin{prop} 
\label{product_IC}
If $\mi_i$  are symmetric probability measures on $\R^{n_i}$,
$1\leq i\leq k$  satisfying $\IC(\beta_i)$, then $\mu=\otimes_{i=1}^k \mi_i$ 
satisfies $\IC(\beta)$ with $\beta=\max_i \beta_i$.
\end{prop}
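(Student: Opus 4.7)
The plan is to combine the tensorization statement (Proposition \ref{tensMaur}) with two standard facts about the Legendre transform, and then use monotonicity of property $(\tau)$ in the cost function.

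First I would observe that for product measures one has the log-moment-generating-function identity $\Lambda_\mu(v_1,\ldots,v_k)=\sum_{i=1}^k \Lambda_{\mu_i}(v_i)$, since the joint exponential moment factorises. Taking the Legendre transform and separating the sup coordinate-wise then gives
\[
\Lambda^\star_\mu(x_1,\ldots,x_k)=\sum_{i=1}^k \Lambda^\star_{\mu_i}(x_i),
\]
which is the standard rule that the Legendre transform of a direct sum is the direct sum of Legendre transforms. Because each $\mu_i$ is symmetric, $\Lambda_{\mu_i}\ge 0$ with $\Lambda_{\mu_i}(0)=0$, hence also $\Lambda^\star_{\mu_i}\ge 0$ with $\Lambda^\star_{\mu_i}(0)=0$, and each $\Lambda^\star_{\mu_i}$ is convex.

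Now set $\beta=\max_i\beta_i$. By hypothesis, $(\mu_i,\Lambda^\star_{\mu_i}(\cdot/\beta_i))$ has property $(\tau)$, so by Proposition \ref{tensMaur} applied with $\varphi_i(x_i)=\Lambda^\star_{\mu_i}(x_i/\beta_i)$ the pair
\[
\Bigl(\mu,\ \sum_{i=1}^k\Lambda^\star_{\mu_i}(x_i/\beta_i)\Bigr)
\]
has property $(\tau)$. To conclude, I would use the elementary fact that if $g$ is convex with $g(0)=0$ and $g\ge 0$, then $t\mapsto g(tx)$ is nondecreasing on $[0,\infty)$ (since $g(tx)\le t\,g(sx)/s$ for $0<t<s$ by convexity together with $g(0)=0$). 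Applied with $g=\Lambda^\star_{\mu_i}$, $t=1/\beta\le 1/\beta_i=s$, this yields $\Lambda^\star_{\mu_i}(x_i/\beta)\le \Lambda^\star_{\mu_i}(x_i/\beta_i)$, so coordinate-wise summation and the identity above give
\[
\Lambda^\star_\mu\bigl(x/\beta\bigr)=\sum_{i=1}^k \Lambda^\star_{\mu_i}(x_i/\beta)\ \le\ \sum_{i=1}^k \Lambda^\star_{\mu_i}(x_i/\beta_i).
\]
Finally I would invoke the trivial monotonicity of property $(\tau)$: if $\psi\le\varphi$ pointwise then $f\Box\psi\le f\Box\varphi$ pointwise, so property $(\tau)$ for $(\mu,\varphi)$ implies it for $(\mu,\psi)$. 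Applying this with $\varphi=\sum_i\Lambda^\star_{\mu_i}(\cdot/\beta_i)$ and $\psi=\Lambda^\star_\mu(\cdot/\beta)$ finishes the proof.

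There is no real obstacle here; the only mildly delicate point is justifying the scaling inequality $\Lambda^\star_{\mu_i}(x/\beta)\le \Lambda^\star_{\mu_i}(x/\beta_i)$, which uses only convexity and $\Lambda^\star_{\mu_i}(0)=0$, both of which come directly from symmetry of the $\mu_i$.
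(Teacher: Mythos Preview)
Your proof is correct and follows essentially the same approach as the paper: factorise $\Lambda_\mu^\star$ as a sum over coordinates, invoke tensorization (Proposition~\ref{tensMaur}), and use that $\Lambda_{\mu_i}^\star(\cdot/\beta)\le\Lambda_{\mu_i}^\star(\cdot/\beta_i)$ by convexity and $\Lambda_{\mu_i}^\star(0)=0$. The only cosmetic difference is ordering: the paper first upgrades each $\mu_i$ from $\IC(\beta_i)$ to $\IC(\beta)$ and then tensorizes, whereas you tensorize first and then compare cost functions---the underlying ingredients are identical.
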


\begin{proof} By independence,
$\Lambda_{\mu}(x_1,\ldots,x_k)=\sum_{i=1}^{k}\Lambda_{\mu_i}(x_i)$ and
$\Lambda_{\mu}^*(x_1,\ldots,x_k)=\sum_{i=1}^{k}\Lambda_{\mu_i}^*(x_i)$.
Since $\IC(\beta)$ implies $\IC(\beta')$ with any $\beta'\geq \beta$, 
the result immediately
follows by Proposition \ref{tensMaur}.
\end{proof}

\begin{prop} 
For $v = (v_0,v_1,\ldots,v_n)$ in $\R^{n+1}$ let $\tilde{v}$
denote the vector $(v_1,v_2,\ldots,v_n) \in \R^n$. 
A probability measure $\mu$ on $\R^n$ satisfies $\IC(\beta)$ if and only if
for any nonempty  $V \subset \R^{n+1}$ and a bounded measurable function
$f$ on $\er^n$,
\begin{equation}
\label{V_IC}
\int_{\R^n} e^{f \Box \psi_V} d\mi \int_{R^n} e^{-f} d\mi \leq 
\sup_{v\in V} \Big(e^{v_0} \int_{\R^n} e^{\beta \is{x}{\tilde{v}}} d\mi(x)\Big),
\end{equation}
where
\[
\psi_V(x) := \sup_{v \in V}\{v_0 + \is{x}{\tilde{v}}\}.
\]
\end{prop}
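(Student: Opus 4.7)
The plan is to bridge the two formulations via the variational identity
$$
\Lambda^\star_\mu(x/\beta) = \sup_{\tilde v \in \R^n}\bigl\{\langle x, \tilde v\rangle - \Lambda_\mu(\beta \tilde v)\bigr\},
$$
which exhibits $\Lambda^\star_\mu(\cdot/\beta)$ itself as a $\psi_V$-type supremum. The distinguishing feature of the choice $v_0 := -\Lambda_\mu(\beta\tilde v)$ is that it makes the factor $e^{v_0}\int e^{\beta\langle x,\tilde v\rangle}d\mu(x)$ equal to $1$, matching precisely the normalization required by property $(\tau)$.

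For the implication (\ref{V_IC}) $\Rightarrow \IC(\beta)$ I would specialize to the graph
$$
V_0 := \bigl\{(-\Lambda_\mu(\beta\tilde v),\tilde v) : \tilde v \in \R^n \bigr\}.
$$
By construction $\psi_{V_0}(x) = \Lambda^\star_\mu(x/\beta)$, and every $v \in V_0$ satisfies $e^{v_0}\int e^{\beta\langle x,\tilde v\rangle}d\mu(x) = 1$, so the supremum on the right-hand side of (\ref{V_IC}) equals $1$. The inequality thus collapses to exactly the defining inequality of property $(\tau)$ for the pair $(\mu,\Lambda^\star_\mu(\cdot/\beta))$.

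For the converse, fix an arbitrary $V$, let $K$ denote the right-hand supremum, and assume $K<\infty$ (otherwise (\ref{V_IC}) is vacuous). The bound $e^{v_0}M_\mu(\beta\tilde v)\leq K$ rearranges to $v_0 \leq \log K - \Lambda_\mu(\beta\tilde v)$ for every $v\in V$, so taking the supremum in $v$ gives the pointwise estimate
$$
\psi_V(x) \leq \log K + \sup_{\tilde v}\{\langle x,\tilde v\rangle - \Lambda_\mu(\beta\tilde v)\} = \log K + \Lambda^\star_\mu(x/\beta).
$$
Since infimum convolution is monotone in its second argument and absorbs additive constants, this upgrades to $f\Box\psi_V \leq \log K + f\Box\Lambda^\star_\mu(\cdot/\beta)$. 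Exponentiating, integrating against $\mu$, multiplying by $\int e^{-f}d\mu$, and invoking $\IC(\beta)$ yields (\ref{V_IC}).

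The substantive step is recognizing $V_0$ as the canonical parametrization of $\Lambda^\star_\mu(\cdot/\beta)$; once this is identified, both directions reduce to elementary manipulations of Legendre transforms and infimum convolutions. A minor technical check: parameters $\tilde v$ with $\Lambda_\mu(\beta\tilde v) = +\infty$ contribute $-\infty$ to both the defining supremum of $\psi_{V_0}$ and to the variational formula for $\Lambda^\star_\mu(\cdot/\beta)$, so they can be discarded from $V_0$ without affecting the argument.
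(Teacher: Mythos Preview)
Your proof is correct and follows essentially the same route as the paper: both directions hinge on the special set $V_0=\{(-\Lambda_\mu(\beta\tilde v),\tilde v)\}$, for which $\psi_{V_0}=\Lambda^\star_\mu(\cdot/\beta)$ and the right-hand supremum equals $1$, while the converse uses the bound $v_0\le \log K-\Lambda_\mu(\beta\tilde v)$ to get $\psi_V\le \log K+\Lambda^\star_\mu(\cdot/\beta)$ and then applies $\IC(\beta)$. Your added remark on discarding $\tilde v$ with $\Lambda_\mu(\beta\tilde v)=+\infty$ is a harmless technical refinement not spelled out in the paper.
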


\begin{proof}
If we put $V = \{(v_0,\tilde{v})\colon v_0 = -\Lambda_\mi(\beta \tilde{v})\}$, 
then the right-hand side is equal to 1 and 
$\psi_V(x) = \Lambda^\star_\mi(x\slash \beta)$, so if $\mi$ 
satisfies (\ref{V_IC}) for this $V$, it satisfies $\IC(\beta)$.

On the other hand, suppose $\mi$ satisfies $\IC(\beta)$. Take an arbitrary 
nonempty set $V$. If the right-hand side supremum is infinite, the inequality
is obvious, so we may assume it is equal to some $s<\infty$. 
This means that for any $(v_0,\tilde{v}) \in V$ we have 
$v_0 + \Lambda_\mi(\beta\tilde{v}) \leq \log s$, that is 
$v_0 \leq \log s - \Lambda_\mi(\beta\tilde{v})$. 
Thus 
\begin{align*}
\psi_V(x) & = \sup_{v\in V} \{v_0 + \is{x}{\tilde{v}}\} \leq 
\log s + \sup_{v\in V} \{\is{x}{\tilde{v}} - \Lambda_\mi(\beta\tilde{v})\} 
\\
&\leq \log s + \sup_{\tilde{v} \in \R^n} \{\is{x}{\tilde{v}} 
- \Lambda_\mi(\beta\tilde{v})\} = \log s + \Lambda^\star_\mi(x\slash \beta),
\end{align*} 
which in turn means from $\IC(\beta)$ that the left hand side is no
larger than $s$.
\end{proof}

\begin{prop} \label{linearIC}
Let $L\colon \R^n \ra \R^k$ be a linear map and suppose  that
a probability measure $\mi$ on $\R^n$ satisfies $\IC(\beta)$. 
Then the probability measure $\mi\circ L^{-1}$ satisfies $\IC(\beta)$.
\end{prop}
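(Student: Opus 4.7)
Write $\nu := \mu \circ L^{-1}$ and let $L^\ast\colon \R^k \to \R^n$ denote the adjoint of $L$. The plan is to reduce the statement to Proposition \ref{tau_trans} applied with $T = L$, $\varphi(z) = \Lambda^\star_\mu(z/\beta)$, and $\psi(y) = \Lambda^\star_\nu(y/\beta)$. Since $(\mu,\varphi)$ has property $(\tau)$ by the assumed $\IC(\beta)$ for $\mu$, and the pushforward of the cost function we want is exactly $\psi$, it suffices to verify the pointwise domination required by Proposition \ref{tau_trans}, namely $\psi(Lz) \le \varphi(z)$ for all $z \in \R^n$ (the general form $\psi(Lx-Ly)\le\varphi(x-y)$ follows by linearity of $L$).

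First I would compute how $\Lambda_\mu$ and $\Lambda_\nu$ are related. Changing variables under the pushforward,
\[
M_\nu(v) = \int_{\R^k} e^{\is{v}{y}} d\nu(y) = \int_{\R^n} e^{\is{v}{Lx}} d\mu(x) = \int_{\R^n} e^{\is{L^\ast v}{x}} d\mu(x) = M_\mu(L^\ast v),
\]
so $\Lambda_\nu(v) = \Lambda_\mu(L^\ast v)$ for every $v\in\R^k$.

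Next I would take Legendre transforms. For any $z\in\R^n$,
\[
\Lambda^\star_\nu(Lz) = \sup_{v\in\R^k}\{\is{Lz}{v} - \Lambda_\mu(L^\ast v)\} = \sup_{v\in\R^k}\{\is{z}{L^\ast v} - \Lambda_\mu(L^\ast v)\}.
\]
The supremum on the right is taken over the subspace $L^\ast(\R^k) \subseteq \R^n$, whereas the definition of $\Lambda^\star_\mu(z)$ takes the supremum over all of $\R^n$. Hence
\[
\Lambda^\star_\nu(Lz) \le \sup_{u\in\R^n}\{\is{z}{u} - \Lambda_\mu(u)\} = \Lambda^\star_\mu(z).
\]
Replacing $z$ by $z/\beta$ yields $\psi(Lz) \le \varphi(z)$, which is precisely the hypothesis of Proposition \ref{tau_trans}.

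Applying that proposition gives property $(\tau)$ for $(\nu, \Lambda^\star_\nu(\cdot/\beta))$, i.e.\ $\nu = \mu\circ L^{-1}$ satisfies $\IC(\beta)$. (Symmetry of $\nu$ is inherited from symmetry of $\mu$ because $L$ is linear.) No real obstacle arises: the argument is purely a convex-duality bookkeeping, the only substantive observation being that taking Legendre transforms turns the identity $\Lambda_\nu = \Lambda_\mu\circ L^\ast$ into the inequality $\Lambda^\star_\nu\circ L \le \Lambda^\star_\mu$ (equality can fail, since $L^\ast$ need not be surjective, but this direction is exactly what Proposition \ref{tau_trans} requires).
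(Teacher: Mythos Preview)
Your proof is correct and takes a genuinely different route from the paper. The paper first establishes an equivalent characterization of $\IC(\beta)$ via the inequality (\ref{V_IC}) involving the auxiliary functions $\psi_V$, and then checks that this characterization is preserved under pushforward by $L$ by pulling back both $f$ and the index set $V$ through $L$ and $L^\star$. You instead bypass that characterization entirely and go straight through Proposition~\ref{tau_trans}: the single substantive step is the convex-duality observation that $\Lambda_\nu = \Lambda_\mu \circ L^\ast$ Legendre-transforms to the one-sided inequality $\Lambda^\star_\nu \circ L \le \Lambda^\star_\mu$, which is exactly the domination hypothesis that Proposition~\ref{tau_trans} needs. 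Your argument is shorter and more self-contained, and it makes transparent \emph{why} the constant $\beta$ is unchanged (linearity of $L$ commutes with the rescaling). The paper's approach, on the other hand, illustrates the utility of the $\psi_V$ reformulation and would adapt more readily if one wanted to push forward other families of cost functions.
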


\begin{proof} 
For any set $V \subset \R \times \R^k$ and any function $f\colon \R^k \ra \R$ 
put $\bar{f}(x) := f(L(x))$ and 
$\bar{V} := \{(v_0,L^\star(\tilde{v})) : (v_0,\tilde{v}) \in V\}$, where 
$L^\star$ is the Hermitian conjugate of $L$. 
Then direct calculation shows $\psi_V(L(x)) = \psi_{\bar{V}}(x)$
and $f\Box \psi_V(L(x))\leq \bar{f}\Box \psi_{\bar{V}}(x)$,
thus $$\int_{\R^k} e^{f\Box \psi_V} d(\mi\circ L^{-1}) 
\leq \int_{\R^n} e^{\bar{f}\Box \psi_{\bar{V}}} d\mi$$ 
and $$\int_{\R^k} e^{-f} d(\mi\circ L^{-1}) = \int_{\R^n} e^{-\bar{f}} d\mi$$ 
and finally 
$$
\sup_{v\in V} \Big\{e^{v_0} \int_{\R^k} e^{\beta \is{x}{\tilde{v}}} 
d(\mi\circ L^{-1})\Big\} = 
\sup_{v\in \bar{V}} \Big\{
e^{v_0} \int_{\R^n} e^{\beta \is{x}{\tilde{v}}} d\mi\Big\},
$$ 
which substituted into (\ref{V_IC}) gives the thesis.
\end{proof}

\begin{prop}
For any $x\in\er$,
$$
\frac{1}{5}\min(x^2,|x|)\leq \Lambda_{\nu}^{*}(x)\leq \min(x^2,|x|),
$$
in particular the measure $\nu$ satisfies $\IC(9)$.
\end{prop}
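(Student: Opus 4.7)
The plan is to compute $\Lambda_\nu^\star$ via its Legendre definition, sandwich it between multiples of $\min(x^2,|x|)$ by elementary estimates at chosen test points, and deduce $\IC(9)$ by pointwise comparison with Maurey's cost function $w$ from Theorem~\ref{inf_nu}. For this last step I use the obvious monotonicity of property~$(\tau)$ in the cost function: if $\varphi\leq\psi$ then $f\Box\varphi\leq f\Box\psi$, so $(\mu,\psi)$ having property~$(\tau)$ automatically implies the same for $(\mu,\varphi)$.

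A direct integration gives $\Lambda_\nu(t)=-\log(1-t^2)$ for $|t|<1$ and $+\infty$ otherwise, so $\Lambda_\nu^\star(x)=\sup_{|t|<1}\{tx+\log(1-t^2)\}$. The upper bound splits into two independent estimates: the constraint $|t|<1$ combined with $\Lambda_\nu\geq 0$ gives $\Lambda_\nu^\star(x)\leq|x|$, while the elementary inequality $-\log(1-s)\geq s$ applied with $s=t^2$ yields $\Lambda_\nu(t)\geq t^2$, hence $\Lambda_\nu^\star(x)\leq\sup_t(tx-t^2)=x^2/4\leq x^2$. Taking the smaller of the two bounds gives $\Lambda_\nu^\star(x)\leq\min(x^2,|x|)$.

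For the lower bound I plug concrete test values into the Legendre supremum. For $|x|\leq 1$ I take $t=x/3$; then $t^2\leq 1/9$, and the elementary estimate $-\log(1-s)\leq s+s^2$ on $[0,1/2]$ yields $\Lambda_\nu(t)\leq t^2+t^4\leq 10x^2/81$ (using $x^4\leq x^2$), so that $\Lambda_\nu^\star(x)\geq x^2/3-10x^2/81=17x^2/81\geq x^2/5$. The particular choice $t=x/3$ is forced by the numerical requirement $17\cdot 5\geq 81$; the more obvious $t=x/2$ only produces the weaker constant $1/6$. For $|x|>1$ I take $t=\operatorname{sign}(x)/2$, which gives $\Lambda_\nu^\star(x)\geq|x|/2-\log(4/3)\geq|x|/5$, the last step holding whenever $|x|\geq \tfrac{10}{3}\log(4/3)\approx 0.96$, in particular on $|x|\geq 1$.

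For the $\IC(9)$ assertion, the rescaled upper bound reads $\Lambda_\nu^\star(x/9)\leq \min(x^2/81,|x|/9)$, and a short case check on the three regions $|x|\leq 4$, $4<|x|\leq 9$, $|x|>9$ (each collapsing to a single quadratic inequality) verifies that $\min(x^2/81,|x|/9)\leq w(x)$ pointwise. Combined with Theorem~\ref{inf_nu} applied in dimension one and the monotonicity remark above, this yields $(\nu,\Lambda_\nu^\star(\cdot/9))\in(\tau)$, i.e.\ $\IC(9)$. I do not anticipate any real obstacle; the only mildly delicate point is the numerical calibration in the small-$|x|$ lower bound, where selecting $t=x/3$ rather than the more obvious $t=x/2$ is what brings the constant up to the claimed $1/5$.
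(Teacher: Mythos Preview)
Your proof is correct. The approach differs from the paper's in one respect worth noting: the paper computes the Legendre transform in closed form,
\[
\Lambda_\nu^\star(x)=\sqrt{1+x^2}-1-\ln\frac{\sqrt{1+x^2}+1}{2},
\]
and then sandwiches this explicit expression using $a/2\leq a-\ln(1+a/2)\leq a$ together with $\sqrt{1+x^2}-1=x^2/(\sqrt{1+x^2}+1)$. You instead bypass the closed form entirely, bounding the supremum above via $\Lambda_\nu(t)\geq t^2$ and $|t|<1$, and below by evaluating at the test points $t=x/3$ and $t=\tfrac12\operatorname{sign}(x)$. Your route is arguably more elementary (no optimization step), at the cost of the numerical tuning you flag; the paper's route yields the exact formula as a byproduct. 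The $\IC(9)$ deduction is identical in both: the paper also simply notes $\min((x/9)^2,|x|/9)\leq w(x)$ and invokes Theorem~\ref{inf_nu}.
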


\begin{proof}
Direct calculation shows that $\Lambda_{\nu}(x)=-\ln(1-x^2)$ for $|x|<1$
and 
$$
\Lambda_{\nu}^\star(x)=\sqrt{1+x^2}-1-\ln\bigg(\frac{\sqrt{1+x^2}+1}{2}\bigg).
$$
Since $a/2\leq a-\ln(1+a/2)\leq a$ for $a\geq 0$, we get
$\frac{1}{2}(\sqrt{1+x^2}-1)\leq \Lambda_{\nu}^\star(x)\leq
\sqrt{1+x^2}-1$. Finally
$$
\min(x,|x|^2)\geq \sqrt{1+x^2}-1 = \frac{x^2}{\sqrt{1+x^2}+1}
\geq \frac{1}{\sqrt{2}+1}\min(|x|,x^2).
$$
The last statement follows by Theorem \ref{inf_nu}, 
since $\min((x/9)^2,|x|/9)\leq w(x)$.
\end{proof}

\subsection{Logaritmically concave product measures} 

A measure $\mu$ on $\er^n$ is {\em logarithmically concave} (log--concave for short) if
for all nonempty compact sets $A,B$ and $t\in [0,1]$,
$$
\mu(tA+(1-t)B)\geq \mu(A)^t\mu(B)^{1-t}.
$$
By Borell's theorem \cite{Bo} a measure $\mu$ on $\er^n$ with a
full--dimensional support
is logarithmically concave if and only if it is absolutely continuous
with respect to the Lebesgue measure and has a logarithmically concave
density, i.e. $d\mu(x)=e^{h(x)}dx$ for some concave function
$h\colon \er^n\rightarrow [-\infty,\infty)$.

Note that if $\mi$ is a probabilistic, symmetric and log--concave measure on $\R^n$, then both $\Lambda_\mi$ and $\Lambda^\star_\mi$ are convex and symmetric, and $\Lambda_\mi(0) = \Lambda_\mi^\star(0) = 0$.

Recall also that a probability measure $\mu$ on $\er^n$ is called 
{\em isotropic} if
$$
\int \is{u}{x}d\mu(x)=0\ \mbox{ and }
\int \is{u}{x}^2d\mu(x)=|u|^2
\ \mbox{ for all } u\in\er^n.
$$ It is easy to check that for any measure $\mi$ with a full--dimensional support there exists a linear map $L$ such that
$\mi \circ L^{-1}$ is isotropic.

The next theorem (with a different universal, but rather large constant)
may be deduced from the results of Gozlan \cite{Go}. We give the following,
relatively short proof for the sake of completeness.

\begin{thm} 
Any symmetric log-concave measure on $\R$ satisfies $\IC(48)$. 
\end{thm}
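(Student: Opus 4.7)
The plan is to transport Maurey's $(\tau)$-inequality for $\nu$ (Theorem \ref{inf_nu}) to $\mu$ via the monotone rearrangement $T=F_\mu^{-1}\circ F_\nu$, which pushes $\nu$ onto $\mu$, and then invoke Proposition \ref{tau_trans}. Symmetry of $\nu$ and $\mu$ forces $T$ to be odd with $T(0)=0$. For $y\geq 0$ set $h(y):=-\log(2\mu([y,\infty)))$; the identity $h(T(x))=x$ for $x\geq 0$ identifies $T|_{[0,\infty)}$ with $h^{-1}$. Log-concavity of $\mu$ implies log-concavity of the tail $y\mapsto\mu([y,\infty))$ (a standard one-dimensional fact via Pr\'ekopa), so $h$ is convex on $[0,\infty)$ with $h(0)=0$; consequently $T$ is concave on $[0,\infty)$ with $T(0)=0$, hence subadditive there. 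A short case analysis on the signs of $x$ and $y$ (using monotonicity and midpoint concavity in the mixed-sign case) then yields
\begin{equation*}
|T(x)-T(y)|\leq 2\,T(|x-y|),\qquad x,y\in\er.
\end{equation*}

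Combined with the elementary bound $\Lambda^\star_\mu(u/c)\leq\Lambda^\star_\mu(u)/c$ for $c\geq 1$ (valid since $\Lambda^\star_\mu$ is convex, symmetric and vanishes at $0$) and symmetry of $w$ and $\Lambda^\star_\mu$, the condition $\Lambda^\star_\mu((T(x)-T(y))/48)\leq w(x-y)$ required by Proposition \ref{tau_trans} reduces to the one-parameter estimate
\begin{equation*}
\Lambda^\star_\mu(T(a)/24)\leq w(a),\qquad a\geq 0.
\end{equation*}
I would verify this by splitting into the two regimes of $w$. For $a\geq 4$ (linear regime) the Chernoff bound $\mu([b,\infty))\leq e^{-\Lambda^\star_\mu(b)}$ rewrites as $\Lambda^\star_\mu(b)\leq h(b)+\log 2$; specializing to $b=T(a)$ and using $h(T(a))=a$ gives $\Lambda^\star_\mu(T(a)/24)\leq(a+\log 2)/24$, which is at most $w(a)=\frac{2}{9}(a-2)$ by a direct arithmetic check once $a\geq 4$. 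For $a\leq 4$ (quadratic regime $w(a)=a^2/36$) the Chernoff bound is too crude and one must recover the quadratic behaviour of $\Lambda^\star_\mu$ near $0$. The plan here is to use the Taylor expansion $\Lambda_\mu(s)=\sigma_\mu^2 s^2/2+O(s^4)$, valid in a universal $\sigma_\mu^{-1}$-sized neighbourhood of $0$ because log-concavity of $\mu$ controls all its moments by $\sigma_\mu^2$, giving $\Lambda^\star_\mu(b)\leq Cb^2/\sigma_\mu^2$ for $|b|\lesssim\sigma_\mu$; this is to be combined with the concavity bound $T(a)\leq a\cdot T'(0)=a/(2\rho_\mu(0))$ and the classical density-at-origin inequality $\rho_\mu(0)\cdot\sigma_\mu\geq c_0>0$ for symmetric log-concave densities on $\er$ (Bobkov). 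The outcome is $\Lambda^\star_\mu(T(a)/24)\leq C'a^2$, which fits inside $a^2/36$ once the numerical constants are tuned.

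The main obstacle is the quadratic regime $a\leq 4$. The linear regime is dispatched cleanly by Chernoff, but near the origin that bound is only linear in $b$ and delivers no quadratic decay; extracting the quadratic behaviour of $\Lambda^\star_\mu$ forces one to leave the tail-based arguments and to appeal to the universal density--variance inequality for symmetric log-concave measures on $\er$. The numerical factor $48=2\cdot 24$ is chosen precisely to absorb the subadditivity loss from the sign-case analysis (the factor $2$) together with the multiplicative slack needed in the quadratic comparison (the factor $24$).
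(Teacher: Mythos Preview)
Your approach is essentially the paper's: transport $\nu$ to $\mu$ by the monotone map $T$, use concavity of $T$ on $[0,\infty)$ to get $|Tx-Ty|\le 2T(|x-y|)$, invoke Proposition~\ref{tau_trans}, and reduce to the one-parameter inequality $\Lambda_\mu^\star(T(a)/24)\le w(a)$, split according to the two regimes of $w$. The paper differs only cosmetically: it first normalizes $\mu$ to be isotropic (via Proposition~\ref{linearIC}) and introduces an auxiliary $\tilde h$ with $\Lambda_\mu^\star\le\tilde h$, checking $\tilde h(T(a)/24)\le w(a)$ instead; the linear regime is handled by exactly your Chernoff/tail bound.

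The one place where your sketch is genuinely incomplete is the quadratic regime: you leave $C'$ unspecified and say the constants can be ``tuned'', but the theorem asserts the specific value $48$, so you must actually close this. Your proposed route (Taylor expansion of $\Lambda_\mu$ with log-concave moment control) works in principle but makes tracking $C$ painful. The paper's route is cleaner and you should use it: Proposition~\ref{sq_above} (whose proof uses only symmetry, via $\int e^{sx}d\mu\ge\cosh(\sigma_\mu s)$) gives, after rescaling, $\Lambda_\mu^\star(b)\le b^2/\sigma_\mu^2$ for $|b|<\sigma_\mu$, i.e.\ your $C=1$. Combined with $T(a)\le a/(2\rho_\mu(0))$ and the Hensley/Bobkov bound $\rho_\mu(0)\sigma_\mu\ge 1/\sqrt{12}$, one gets for $a\le 4$
\[
\Lambda_\mu^\star\!\Big(\frac{T(a)}{24}\Big)\le \frac{1}{\sigma_\mu^2}\Big(\frac{a}{48\rho_\mu(0)}\Big)^2\le \frac{a^2}{(48/\sqrt{12})^2}=\frac{a^2}{192}<\frac{a^2}{36}=w(a),
\]
and one checks $T(4)/24\le 1/(12\rho_\mu(0))<\sigma_\mu$ so the quadratic bound applies on the whole range. (The paper achieves the same thing by computing $g(0)\ge 1/8$ directly in the isotropic case, giving $T'(0)\le 4$.) With this in place your argument is complete and matches the paper's.
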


\begin{proof}
Let $\mu$ be a symmetric log--concave probability measure on $\R$, we may assume $\mi$ is isotropic by 
Proposition \ref{linearIC}. Denote the density of $\mi$ by $g(x)$ and let the tail function be $\mu[x,\infty)=e^{-h(x)}$. Let  
$$
a:=\inf\{x>0\colon g(x)\leq e^{-1}g(0)\},
$$
then $g(x)\leq e^{-x/a}g(0)$ for $x>a$ and $g(x)\geq e^{-x/a}g(0)$ for
$x\in[0,a)$. Therefore
$$
\int_{0}^{\infty}g(x)dx\leq ag(0)+\int_{a}^{\infty}g(0)e^{-x/a}dx,
$$
that is
\begin{equation}
\label{est_ag1}
1/2 \leq ag(0)(1+e^{-1}).
\end{equation}
We also have
$$
\int_{0}^ax^2g(0)e^{-x/a}dx\leq \int_{0}^{\infty}x^2g(x)dx,
$$
so
\begin{equation}
\label{est_ag2}
a^3g(0)(2-5e^{-1})\leq 1/2.
\end{equation}
From (\ref{est_ag1}) and (\ref{est_ag2}) we get in particular
$$
\frac{1}{8}\leq \sqrt{\frac{e^2(2e-5)}{4(e+1)^3}}\leq g(0).
$$
Let $T\colon \er\rightarrow\er$ be a function such that
$\nu(-\infty,x)=\mu(-\infty,Tx)$. Then $\mu=\nu\circ T^{-1}$,
$T$ is odd and concave on $[0,\infty)$. In particular,
$|Tx-Ty|\leq 2|T(x-y)|$ for all $x,y\in \er$.

Notice that $T'(0)=1/(2g(0))\leq 4$, thus by concavity of $T$, $Tx\leq 4x$ for
$x\geq 0$. Moreover for $x\geq 0$, $h(Tx)=x+\ln 2$.

Define
$$
\tilde{h}(x):=\left\{
\begin{array}{ll}
x^2 & \mbox{ for } |x|\leq 2/3
\\
\max\{4/9,h(x)\} & \mbox{ for } |x|> 2/3.
\end{array}
\right.
$$

We claim that $(\mu,\tilde{h}(\frac{\cdot}{48}))$ has property $(\tau)$.
Notice that $\tilde{h}((Tx-Ty)/48)\leq \tilde{h}(T(|x-y|)/24)$ so
by Proposition \ref{tau_trans}  it is enough to check that
\begin{equation}
\label{est_htild}
\tilde{h}\Big(\frac{Tx}{24}\Big)\leq w(x) \mbox{ for } x\geq 0,
\end{equation}
where $w(x)$ is as in Theorem \ref{inf_nu}. We have two cases.\\
i) $Tx\leq 16$, then
$$
\tilde{h}\Big(\frac{Tx}{24}\Big)=\Big(\frac{Tx}{24}\Big)^2\leq
\min\Big\{\frac{4}{9},\Big(\frac{x}{6}\Big)^2\Big\}\leq w(x). 
$$
ii) $Tx\geq 16$, then $x\geq 4$ and
\begin{align*}
\tilde{h}\Big(\frac{Tx}{24}\Big)&=
\max\Big\{\frac{4}{9},h\Big(\frac{Tx}{24}\Big)\Big\}\leq 
\max\Big\{\frac{4}{9},\frac{h(Tx)}{24}\Big\}= 
\max\Big\{\frac{4}{9},\frac{x+\ln 2}{24}\Big\}\leq 
\frac{x}{9}
\\
&\leq w(x).
\end{align*}
So (\ref{est_htild}) holds in both cases.

To conclude we need to show that $\Lambda_{\mu}^*(x)\leq \tilde{h}(x)$.
For $|x|\leq 2/3$ it follows from the more general Proposition \ref{sq_above}
below. Notice
that for any $t,x\geq 0$, $\Lambda_{\mu}(t)\geq tx+\ln\mu[x,\infty)=tx-h(x)$, 
hence
$$
\Lambda_{\mu}^*(x)=\Lambda_{\mu}^*(|x|)=
\sup_{t\geq 0}\big\{t|x|-\Lambda_{\mu}(t)\big\}
\leq h(|x|)\leq \tilde{h}(x)
$$
for $|x|>2/3$.
\end{proof}

Using Corollary \ref{product_IC} we get

\begin{cor} 
\label{SIClogprod} 
Any symmetric, log--concave product probability measure on 
$\R^n$ satisfies $\IC(48)$.
\end{cor}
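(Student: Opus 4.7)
The plan is that this corollary is essentially immediate from the theorem just proved together with Proposition \ref{product_IC}; there is no new analytic content to produce. First I would write $\mu = \bigotimes_{i=1}^n \mu_i$ where each $\mu_i$ is the $i$-th one-dimensional marginal. A product probability measure on $\R^n$ is symmetric if and only if each marginal is symmetric, and it is log-concave if and only if each marginal is log-concave (both reduce, via Borell's theorem, to the density factoring as a product of log-concave one-dimensional densities). So the hypotheses transfer to each $\mu_i$.

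Next I would invoke the preceding theorem, which gives $\IC(48)$ for every symmetric log-concave probability measure on $\R$, to conclude that each $\mu_i$ satisfies $\IC(48)$. Finally, Proposition \ref{product_IC} says that if $\mu_i$ satisfies $\IC(\beta_i)$, then $\bigotimes_i \mu_i$ satisfies $\IC(\max_i \beta_i)$. Applying this with all $\beta_i = 48$ yields $\IC(48)$ for $\mu$, as required.

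There is no genuine obstacle: the tensorization lemma and Legendre-transform additivity on which Proposition \ref{product_IC} rests have already been established, and the one-dimensional $\IC(48)$ bound — which is the actual content — is handed to us by the preceding theorem. The only thing worth noting is that the constant 48 is preserved verbatim because the tensorization rule takes a maximum rather than a sum, and all factors share the same bound.
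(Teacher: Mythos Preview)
Your proposal is correct and follows exactly the paper's approach: the corollary is stated as an immediate consequence of the preceding one-dimensional theorem together with the tensorization result Proposition \ref{product_IC}, with no additional argument given. Your remarks about why the marginals inherit symmetry and log-concavity, and why the constant $48$ is preserved (maximum rather than sum), are accurate elaborations of what the paper leaves implicit.
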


We expect that in fact a more general fact holds.

\begin{conj}
\label{conj_IC}
Any symmetric log--concave probability measure satisfies $\IC(C)$ with
a uniform constant $C$.
\end{conj}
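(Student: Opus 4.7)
The plan is to mirror the preceding one-dimensional argument via the concentration-to-$(\tau)$ reduction of Proposition \ref{reduction}. First, by Proposition \ref{linearIC} one may restrict attention to isotropic symmetric log-concave $\mu$ on $\er^n$. Since $\mu$ is log-concave, $\Lambda^\star_\mu$ is convex and symmetric with $\Lambda^\star_\mu(0)=0$, and a standard second-moment computation gives $\Lambda^\star_\mu(x) \leq C|x|^2$ near the origin, so the structural hypotheses of Proposition \ref{reduction} are available with $\varphi = \Lambda^\star_\mu$. Two ingredients are then required: (a) a universal Cheeger inequality (\ref{cheeger}) for $\mu$; and (b) the exponential enlargement (\ref{concsm}) with $K$ a universal multiple of $B_{\Lambda^\star_\mu}(t)$ for all $t \geq 1$.

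Ingredient (a) is precisely the Kannan--Lov\'asz--Simonovits conjecture, open in general but known in several classes of interest (unconditional measures, $\ell_p^n$-balls as pursued in Section 5, bodies with enough symmetry). Ingredient (b) can be attacked by exploiting the identification, to be pursued in Section 3, of $B_{\Lambda^\star_\mu}(t)$ (up to a universal factor) with the polar of the $L_t$-centroid body $\mathcal{Z}_t(\mu)$. The target then becomes an inequality of the form $\mu(A + c\,\mathcal{Z}_t(\mu)^\circ) > \min\{e^t\mu(A),1/2\}$ for all Borel $A$ and $t \geq 1$. A duality approach presents itself: control $\sup_{\theta \in \mathcal{Z}_t(\mu)^\circ}\is{X}{\theta}$ for $X \sim \mu$ via an $\varepsilon$-net on $\partial \mathcal{Z}_t(\mu)$, combined with sharp tail bounds on individual linear functionals $\is{X}{\theta}$.

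The hard part will be (b), uniformly in $n$ and $t$. Indeed, by Proposition \ref{conseq} the conjecture would imply both Paouris's tail bounds and Klartag's CLT, so any proof must carry enough information to recover them, strongly suggesting that (b) is of essentially the same depth as Paouris-type estimates. A realistic interim target is therefore to prove $\IC(C\cdot L_\mu)$ in place of $\IC(C)$, with $L_\mu$ the isotropic constant of $\mu$; conditionally on KLS, this reduces to the $\mathcal{Z}_t$-adapted large-$t$ enlargement inequality, which one may hope to obtain from Cheeger plus a Bobkov--Ledoux-style functional argument, perhaps patterned on the $\ell_p^n$-ball techniques of Section 5.
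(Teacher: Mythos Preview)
The statement you are attempting to prove is Conjecture~\ref{conj_IC}, which the paper explicitly leaves open; there is no proof in the paper to compare against. Your proposal is not a proof either, and you essentially concede this: ingredient (a) is the Kannan--Lov\'asz--Simonovits conjecture, open in general, and you admit that ingredient (b) is ``of essentially the same depth as Paouris-type estimates'' without supplying an argument for it. What you have written is in fact precisely the reduction the paper itself records immediately after stating Conjectures~\ref{conj_CI} and~\ref{conj_KLS}: Corollary~\ref{IC_and_CI} shows that $\CI$ together with a uniform Cheeger inequality yields $\IC$, so Conjecture~\ref{conj_IC} would follow from KLS plus Conjecture~\ref{conj_CI}. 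This is a correct framing of the open problem, not a solution to it.

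One technical slip worth flagging: $B_{\Lambda^\star_\mu}(t)$ is comparable to the $L_t$-centroid body $\mathcal{Z}_t(\mu)$ itself, not to its polar. Propositions~\ref{Z_sub_B} and~\ref{B_sub_Z} give $\mathcal{Z}_t(\mu)\sim B_t(\mu)$ for $\alpha$-regular $\mu$; the polar $\mathcal{Z}_t(\mu)^\circ=\mathcal{M}_t(\mu)$ is the dual object and shrinks as $t$ grows. The enlargement you actually want is $\mu(A+c\,\mathcal{Z}_t(\mu))>\min\{e^t\mu(A),1/2\}$, which is exactly the condition $\CI$ of Definition~\ref{CIDef}. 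Note also that the $\varepsilon$-net argument you sketch is essentially what drives Proposition~\ref{small_sets}, and that proposition already shows it only handles the regime $t\gtrsim n$ or $\mu(A)\leq e^{-cn}$; the genuine difficulty of Conjecture~\ref{conj_CI} lies in the intermediate range, where a net of cardinality $5^n$ is far too coarse.
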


\section{Concentration inequalities.}\label{Conc_In_Section}

\subsection{$L_p$-centroid bodies and related sets}

\begin{Def}
Let $\mu$ be a probability measure on $\er^n$, for $p\geq 1$ we define the 
following sets
$$
{\cal M}_{p}(\mu):=\Big\{v\in \er^n\colon \int |\is{v}{x}|^p d\mu(x)\leq 1\Big\},
$$
$$
{\cal Z}_{p}(\mu):=({\cal M}_{p}(\mu))^{\circ}=
\Big\{x\in \er^n\colon \big|\is{v}{x}\big|^p
\leq \int \big|\is{v}{y}\big|^p d\mu(y)
\mbox{ for all } v\in \er^n\Big\}
$$
and for $p>0$ we put
$$
B_{p}(\mu):=\{v\in \er^n\colon \Lambda_{\mu}^{*}(v)\leq p\}.
$$
\end{Def}

Sets ${\cal Z}_{p}(\mu_K)$ for $p\geq 1$, when $\mu_K$ is the uniform 
disribution on the convex body $K$ are called {\em $L_p$-centroid bodies of $K$}, 
their properties were investigated in \cite{Pa}.

\begin{prop}
\label{Z_sub_B}
For any symmetric probability measure $\mu$ on $\er^n$ and $p\geq 1$,
$$
{\cal Z}_{p}(\mu)\subset 2^{1/p}eB_{p}(\mu).
$$
\end{prop}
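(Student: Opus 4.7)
The conclusion $\mathcal{Z}_p(\mu)\subset 2^{1/p}e\,B_p(\mu)$ amounts to showing that $\Lambda^{*}_\mu(x/(2^{1/p}e))\leq p$ for every $x\in\mathcal{Z}_p(\mu)$, i.e.\ that
\[
\is{u}{x}\leq 2^{1/p}e\,\bigl(p+\Lambda_\mu(u)\bigr)\quad\text{for all }u\in\er^n.
\]
I would approach this by first extracting a raw exponential estimate from the definition of $\mathcal{Z}_p(\mu)$, and then converting it into this linear-in-$\Lambda_\mu$ form by rescaling $u$ along the ray $\{\lambda u:\lambda\geq 0\}$ and exploiting the convexity of $\Lambda_\mu$.

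The first step is to derive the intermediate bound
\[
|\is{u}{x}|\;\leq\;2^{1/p}\,\frac{p}{e}\,\exp\!\Bigl(\frac{\Lambda_\mu(u)}{p}\Bigr).\qquad(\star)
\]
The three ingredients would be: symmetry of $\mu$ gives $M_\mu(u)=\int\cosh(\is{u}{y})\,d\mu\geq\tfrac12\int e^{|\is{u}{y}|}\,d\mu$; the elementary inequality $e^s\geq(es/p)^p$ valid for $s\geq 0$ (equivalent to $\ln s\leq s-1$) integrated against $\mu$ gives $\int e^{|\is{u}{y}|}\,d\mu\geq(e/p)^p\int|\is{u}{y}|^p\,d\mu$; and the hypothesis $x\in\mathcal{Z}_p(\mu)$ provides $|\is{u}{x}|^p\leq\int|\is{u}{y}|^p\,d\mu$. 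Chaining these and taking $p$-th roots yields $(\star)$.

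The second step is a rescaling trick. Set $\phi(\lambda):=\Lambda_\mu(\lambda u)$ for $\lambda\geq 0$. By symmetry of $\mu$, $\phi$ is convex and nonnegative with $\phi(0)=0$, so $\phi$ is nondecreasing on $[0,\infty)$ and $\lambda\mapsto\phi(\lambda)/\lambda$ is nondecreasing on $(0,\infty)$. If $\Lambda_\mu(u)\geq p$, the intermediate value theorem furnishes $\lambda^{*}\in(0,1]$ with $\phi(\lambda^{*})=p$; applying $(\star)$ to $\lambda^{*} u$ in place of $u$ then produces $\lambda^{*}|\is{u}{x}|\leq 2^{1/p}p$, hence
\[
|\is{u}{x}|\leq 2^{1/p}\frac{\phi(\lambda^{*})}{\lambda^{*}}\leq 2^{1/p}\phi(1)=2^{1/p}\Lambda_\mu(u).
\]
If instead $\Lambda_\mu(u)<p$, applying $(\star)$ directly gives $|\is{u}{x}|\leq 2^{1/p}(p/e)\cdot e=2^{1/p}p$. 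In either case $|\is{u}{x}|\leq 2^{1/p}e\,(p+\Lambda_\mu(u))$, as required; the case $\Lambda_\mu(u)=\infty$ is vacuous.

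The main obstacle is that a direct use of $(\star)$ cannot close the argument, because $e^{\Lambda_\mu(u)/p}$ eventually outpaces $p+\Lambda_\mu(u)$ as $\Lambda_\mu(u)\to\infty$. The ray-rescaling trick, coupled with the monotonicity of $\phi(\lambda)/\lambda$, is precisely what confines the analysis to the regime $\Lambda_\mu(\lambda u)\asymp p$ where $(\star)$ is sharp enough to yield the desired polynomial bound.
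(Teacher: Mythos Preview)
Your proof is correct and follows essentially the same strategy as the paper's: both use symmetry of $\mu$ to gain the factor $2^{1/p}$, the pointwise inequality $e^{s}\geq (es/p)^p$ (the paper uses the weaker $e^s\geq s$) to bound $M_\mu(u)$ below by the $p$-th moment, and then a rescaling along the ray $\{\lambda u\}$ combined with convexity of $\Lambda_\mu$ to handle the large-$\Lambda_\mu(u)$ regime. The only cosmetic difference is that you package everything into the clean intermediate bound $(\star)$ and split cases on whether $\Lambda_\mu(u)\gtrless p$, whereas the paper parametrizes the split by $\beta=\bigl(\int|\is{u}{y}|^p\,d\mu\bigr)^{1/p}\gtrless 2^{1/p}ep$.
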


\begin{proof}
Let us take $v\in {\cal Z}_{p}(\mu)$, we need to show that 
$\Lambda_{\mu}^{\star}(v/(2^{1/p}e))\leq p$, that is
\[
\frac{\is{u}{v}}{2^{1/p}e}-\Lambda_\mu(u)\leq p 
\mbox{ for all }
u\in \er^n.
\]
Let us fix $u\in \er^n$ with  $\int |\is{u}{x}|^pd\mu(x)=\beta^p$, then
$u/\beta\in {\cal M}_{p}(\mu)$. 
We will consider two cases.

i) $\beta\leq 2^{1/p}ep$. Then, since 
$\Lambda_{\mu}(u)\geq \int \langle u,x\rangle d\mu(x) =0$,
$$
\frac{\is{u}{v}}{2^{1/p}e}-\Lambda_{\mu}(u)\leq
\frac{\beta}{2^{1/p}e}\is{\frac{u}{\beta}}{v}\leq p\cdot 1.
$$

ii) $\beta> 2^{1/p}ep$. We have
\begin{align*}
\int e^{\is{u}{x}}d\mu(x)&\geq
\int \big|e^{\is{u}{x}/p}\big|^pI_{\{\is{u}{x}\geq 0\}}d\mu(x)
\geq \int \Big|\frac{\is{u}{x}}{p}\Big|^pI_{\{\is{u}{x}\geq 0\}}d\mu(x)
\\
&\geq \frac{1}{2}\int \Big|\frac{\is{u}{x}}{p}\Big|^p d\mu(x),
\end{align*}
thus
$$
\int e^{2^{1/p}ep\is{u}{x}/\beta}d\mu(x)\geq
\frac{1}{2}\int \Big|\frac{2^{1/p}e\is{u}{x}}{\beta}\Big|^{p}d\mu(x)
= e^p.
$$
Hence $\Lambda_{\mu}(2^{1/p}epu/\beta)\geq p$ and
$\Lambda_{\mu}(u)\geq \frac{\beta}{2^{1/p}ep}
\Lambda_{\mu}(2^{1/p}epu/\beta)
\geq \frac{\beta}{2^{1/p}e}$. Therefore
$$
\frac{\is{u}{v}}{2^{1/p}e}-\Lambda_{\mu}(u)\leq
\frac{\beta}{2^{1/p}e}\is{\frac{u}{\beta}}{v}-\frac{\beta}{2^{1/p}e}
\leq 0.
$$
\end{proof}

\begin{prop}
\label{sq_above}
If $\mu$ is a symmetric, isotropic probability measure on $\er^n$, then
$\min\{1,\Lambda_{\mu}^*(u)\}\leq |u|^2$ for all $u$, in particular
$$
\sqrt{p}B_2^n\subset B_p(\mu) \mbox{ for } p\in (0,1).
$$
\end{prop}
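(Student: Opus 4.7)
The plan is to first establish a dimension-free lower bound $\Lambda_\mu(v)\geq\log\cosh(|v|)$ using only symmetry and isotropy, and then to deduce the desired upper bound on $\Lambda_\mu^*$ by Legendre duality combined with an elementary one-variable estimate.

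For the lower bound I would expand the moment generating function in a Taylor series. Symmetry of $\mu$ annihilates all odd moments, so by monotone convergence
$$
M_\mu(v)=\int\cosh(\langle v,x\rangle)d\mu(x)=\sum_{k\geq 0}\frac{1}{(2k)!}\int\langle v,x\rangle^{2k}d\mu(x).
$$
For each $k\geq 1$ the convexity of $t\mapsto t^k$ on $[0,\infty)$ combined with isotropy lets Jensen's inequality give
$\int\langle v,x\rangle^{2k}d\mu(x)=\int(\langle v,x\rangle^2)^k d\mu(x)\geq(|v|^2)^k$. Summing termwise recovers $M_\mu(v)\geq\cosh(|v|)$, and taking logs yields the claim.

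Passing to the Legendre transform, Cauchy--Schwarz and the fact that the bound depends only on $|v|$ give
$$
\Lambda_\mu^*(u)\leq\sup_{v\in\er^n}\{\langle u,v\rangle-\log\cosh(|v|)\}=\sup_{r\geq 0}\{|u|r-\log\cosh(r)\}.
$$
If $|u|\geq 1$ the inequality $\min\{1,\Lambda_\mu^*(u)\}\leq|u|^2$ is immediate from $\min\{1,\Lambda_\mu^*(u)\}\leq 1\leq|u|^2$. For $|u|<1$ the one-dimensional supremum is attained at $r^*=\operatorname{arctanh}(|u|)$ and equals $h(|u|):=|u|\operatorname{arctanh}(|u|)+\tfrac12\log(1-|u|^2)$, so the proof reduces to checking $h(x)\leq x^2$ on $[0,1)$. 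Setting $F(x)=x^2-h(x)$, one has $F(0)=F'(0)=0$ and $F''(x)=2-1/(1-x^2)$, which changes sign at $x=1/\sqrt 2$. Hence $F$ is convex on $[0,1/\sqrt 2]$ (so nonnegative there by the vanishing of $F$ and $F'$ at the origin) and concave on $[1/\sqrt 2,1)$ with positive endpoint values (the limit at $1$ being $1-\log 2$), so nonnegative there by chord dominance.

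The inclusion $\sqrt p B_2^n\subset B_p(\mu)$ for $p\in(0,1)$ then follows immediately: if $|u|\leq\sqrt p<1$, the stronger bound $\Lambda_\mu^*(u)\leq|u|^2\leq p$ places $u$ in $B_p(\mu)$. I do not anticipate a real obstacle — the heart of the argument is the Jensen-plus-symmetry observation yielding $M_\mu\geq\cosh$, after which everything collapses to an elementary calculus inequality in one real variable, which one could alternatively verify by a direct Taylor expansion of $h$.
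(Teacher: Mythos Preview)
Your proof is correct and follows essentially the same route as the paper: both establish $M_\mu(v)\geq\cosh(|v|)$ via symmetry and the Jensen/moment comparison $\int\langle v,x\rangle^{2k}d\mu\geq|v|^{2k}$, then pass to the Legendre dual. The only difference is in verifying the resulting one-variable inequality $h(x)\leq x^2$: the paper rewrites $h(x)=\tfrac12[(1+x)\ln(1+x)+(1-x)\ln(1-x)]$ and applies $\ln(1+t)\leq t$ to each term for a one-line bound, whereas you carry out a second-derivative/concavity analysis---both are valid.
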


\begin{proof}
Using the symmetry and isotropicity of $\mu$, we get
$$
\int e^{\is{u}{x}}d\mu(x)=
1+\sum_{k=1}^{\infty}\frac{1}{(2k)!}\int \is{u}{x}^{2k}d\mu(x)
\geq 1+\sum_{k=1}^{\infty}\frac{|u|^{2k}}{(2k)!}=\cosh (|u|).
$$
Hence for $|u|<1$,
$$
\Lambda_{\mu}^{*}(u)\leq {\cal L}(\ln \cosh)(|u|)=
\frac{1}{2}\Big[(1+|u|)\ln(1+|u|)+(1-|u|)\ln (1-|u|)\Big]\leq |u|^2,
$$
where to get the last inequality we used $\ln(1+x)\leq x$ for
$x>-1$.
\end{proof}

\subsection{$\alpha$-regular measures.}

To establish inlusions opposite to those in the previous subsection, we 
introduce the following property: 

\begin{Def}
We say that a measure $\mu$ on $\er^n$ is $\alpha$-regular if for
any $p\geq q\geq 2$ and $v\in\er^n$,
$$
\Big(\int |\is{v}{x}|^p d\mu(x)\Big)^{1/p}\leq
\alpha\frac{p}{q}\Big(\int |\is{v}{x}|^q d\mu(x)\Big)^{1/q}.
$$
\end{Def}

\begin{prop}
\label{B_sub_Z}
If $\mu$ is $\alpha$-regular for some $\alpha\geq 1$, 
then for any $p\geq 2$,
$$
B_{p}(\mu)\subset 4e\alpha {\cal Z}_{p}(\mu).
$$
\end{prop}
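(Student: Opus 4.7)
The plan is to exploit Legendre duality between $\Lambda_\mu$ and $\Lambda_\mu^\star$ together with $\alpha$-regularity to convert the hypothesis $\Lambda_\mu^\star(v)\leq p$ into an $L^p$ bound on linear functionals. Fix $v\in B_p(\mu)$ and $u\in\er^n$; by scaling one may assume $M_p(u):=(\int|\is{u}{x}|^p d\mu)^{1/p}=1$, and after replacing $u$ by $-u$ if necessary it suffices to prove the one-sided estimate $\is{u}{v}\leq 4e\alpha$. The Fenchel--Young inequality gives, for every $t>0$,
$$
t\is{u}{v}\leq \Lambda_\mu(tu)+\Lambda_\mu^\star(v)\leq \Lambda_\mu(tu)+p,
$$
so the task reduces to exhibiting some $t>0$ with $(\Lambda_\mu(tu)+p)/t\leq 4e\alpha$.

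To bound $\Lambda_\mu(tu)$, I would use the Taylor expansion
$$
\int e^{t\is{u}{x}}d\mu\leq \int e^{t|\is{u}{x}|}d\mu=\sum_{k=0}^\infty \frac{t^k M_k^k}{k!},
$$
where $M_k:=(\int|\is{u}{x}|^k d\mu)^{1/k}$. Two ingredients control the moments: since $\mu$ is a probability measure, Jensen's inequality gives $M_k\leq M_p=1$ for $0\leq k\leq p$, and $\alpha$-regularity (applied with exponents $k\geq p\geq 2$) gives $M_k\leq \alpha k/p$ for $k\geq p$. I would then set $t:=p/(2e\alpha)$ and split the series at $k=p$. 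The low-$k$ portion contributes at most $\sum_{k\geq 0}t^k/k!=e^t$, while for $k>p$ the general term satisfies
$$
\frac{t^k M_k^k}{k!}\leq \frac{(k/(2e))^k}{k!}\leq \frac{2^{-k}}{\sqrt{2\pi k}}
$$
by Stirling's bound $k!\geq \sqrt{2\pi k}(k/e)^k$; the tail therefore sums to at most $1$. Consequently $\int e^{t\is{u}{x}}d\mu\leq e^t+1\leq 2e^t$ and $\Lambda_\mu(tu)\leq t+\log 2$.

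Substituting back yields $\is{u}{v}\leq 1+(p+\log 2)/t=1+2e\alpha+2e\alpha\log 2/p$, which for $p\geq 2$ and $\alpha\geq 1$ is bounded by $1+3e\alpha\leq 4e\alpha$, as required. The main obstacle is choosing the scale $t$: a ratio-test computation shows that the tail series converges only for $t<p/(e\alpha)$, so $t$ cannot be pushed far past $p/(2e\alpha)$, and it is exactly this threshold that forces the final constant $4e\alpha$. Once $t$ is pinned down, the remaining work is routine bookkeeping with Stirling's bound and a geometric sum.
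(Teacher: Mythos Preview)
Your proof is correct and follows essentially the same route as the paper's: both arguments choose the scale $t=p/(2e\alpha)$, expand $\int e^{t|\is{u}{x}|}d\mu$ as a Taylor series, split at $k=p$, control the low part by Jensen and the tail by $\alpha$-regularity plus Stirling, and conclude via Legendre duality. The only cosmetic difference is that the paper argues by contrapositive (if $v\notin 4e\alpha\,\mathcal{Z}_p(\mu)$ then $\Lambda_\mu^\star(v)>p$), obtaining the cruder bound $\Lambda_\mu(tu)\leq p$, whereas you argue directly via Fenchel--Young and therefore need the slightly sharper $\Lambda_\mu(tu)\leq t+\log 2$; both are immediate from the same series estimate.
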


\begin{proof}
First we will show that 
\begin{equation}
\label{claim_1}
u\in {\cal M}_{p}(\mu)\ \Rightarrow\
\Lambda_{\mu}\Big(\frac{pu}{2e\alpha}\Big)\leq p.
\end{equation}
Indeed if we fix $u\in {\cal M}_{p}(\mu)$ and put
$\tilde{u}:=\frac{pu}{2e\alpha}$, then
$$
\Big(\int |\is{\tilde{u}}{x}|^kd\mu(x)\Big)^{1/k}=
\frac{p}{2e\alpha}\Big(\int |\is{u}{x}|^kd\mu(x)\Big)^{1/k}\leq
\left\{
\begin{array}{ll}
\frac{p}{2e\alpha} & k\leq p
\\
\frac{k}{2e} & k>p.
\end{array}
\right.
$$
Hence
\begin{align*}
\int e^{\is{\tilde{u}}{x}}d\mu(x)&\leq
\int e^{|\is{\tilde{u}}{x}|}d\mu(x)=
\sum_{k=0}^{\infty}\frac{1}{k!}\int|\is{\tilde{u}}{x}|^kd\mu(x)
\\
&\leq 
\sum_{k\leq p}\frac{1}{k!}\Big|\frac{p}{2e\alpha}\Big|^k+
\sum_{k>p}\frac{1}{k!}\Big|\frac{k}{2e}\Big|^k
\leq e^{\frac{p}{2e\alpha}}+1\leq e^{p}
\end{align*}
and (\ref{claim_1}) follows.

Take any $v\notin 4e\alpha {\cal Z}_{p}(\mu)$, then we may find
$u\in {\cal M}_{p}(\mu)$ such that $\is{v}{u}> 4e\alpha$ and 
obtain
$$
\Lambda_{\mu}^{*}(v)\geq \is{v}{\frac{pu}{2e\alpha}}-
\Lambda_{\mu}\Big(\frac{pu}{2e\alpha}\Big)> \frac{p}{2e\alpha}4e\alpha-p
=p.
$$
\end{proof}

\begin{prop}\label{sq_below}
If $\mu$ is symmetric, isotropic $\alpha$-regular for some 
$\alpha\geq 1$, then 
$$
\Lambda_\mu^*(u)\geq \min\Big\{\frac{|u|}{2\alpha e},\frac{|u|^2}{2\alpha^2e^2}\Big\},
$$
in particular
$$
B_p(\mu)\subset \max\{2\alpha ep ,\alpha e\sqrt{2 p}\} B_2^n
\mbox{ for all } p>0.
$$
\end{prop}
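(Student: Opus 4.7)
To lower bound $\Lambda_\mu^*(u) = \sup_v\{\langle u, v\rangle - \Lambda_\mu(v)\}$, I would restrict the supremum to collinear test vectors $v = tu$ and then bound the moment generating function $\Lambda_\mu(tu)$ from above using $\alpha$-regularity. Expanding $\int e^{\langle tu, x\rangle} d\mu$ in a Taylor series, symmetry kills the odd moments; $\alpha$-regularity with $q=2$ combined with isotropy yields $\int \langle u,x\rangle^k d\mu \leq (\alpha k|u|/2)^k$ for every even $k \geq 2$; and the elementary bound $k! \geq (k/e)^k$ makes each Taylor term at most $s^k$ with $s := t\alpha e|u|/2$. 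Summing geometrically in the regime $s \leq 1/2$ gives
$$
\Lambda_\mu(tu) \leq 2s^2 = \frac{t^2 \alpha^2 e^2 |u|^2}{2}, \qquad 0 \leq t \leq \frac{1}{\alpha e|u|}.
$$

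With this bound in hand, the problem reduces to maximizing the quadratic $t|u|^2 - t^2\alpha^2 e^2|u|^2/2$ over the admissible interval. The unconstrained maximizer $t^\star = 1/(\alpha^2 e^2)$ attains the value $|u|^2/(2\alpha^2 e^2)$, and it lies in the admissible interval precisely when $|u| \leq \alpha e$. In the complementary regime $|u| > \alpha e$, the constraint is active and the maximum on the interval is attained at the right endpoint $t = 1/(\alpha e|u|)$, where the quadratic equals $|u|/(\alpha e) - 1/2$, which dominates $|u|/(2\alpha e)$ exactly when $|u| \geq \alpha e$. Merging the two regimes produces
$$
\Lambda_\mu^*(u) \geq \min\Big\{\frac{|u|}{2\alpha e}, \frac{|u|^2}{2\alpha^2 e^2}\Big\}.
$$

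The set inclusion follows: if $\Lambda_\mu^*(u) \leq p$, then one of the two quantities in the minimum is $\leq p$, so either $|u| \leq 2\alpha e p$ or $|u| \leq \alpha e\sqrt{2p}$, and in either case $|u| \leq \max\{2\alpha e p, \alpha e\sqrt{2p}\}$. The only substantive step is the moment series estimate, where the restriction $s \leq 1/2$ is precisely what forces the dichotomous form of the final bound; the main care needed is to verify that evaluating the optimized quadratic at the boundary $t = 1/(\alpha e|u|)$ in the large-$|u|$ regime produces a lower bound that lines up cleanly with the linear branch $|u|/(2\alpha e)$ of the claimed minimum.
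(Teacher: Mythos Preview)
Your proof is correct and follows essentially the same route as the paper: expand the moment generating function, kill odd moments by symmetry, bound the even moments via $\alpha$-regularity and Stirling to obtain $\Lambda_\mu(v)\le \alpha^2 e^2|v|^2/2$ on the range $\alpha e|v|\le 1$, and then carry out the constrained quadratic optimization for the Legendre transform. The only cosmetic differences are that the paper works directly with $v$ rather than your parameter $t$ and isolates the $k=2$ term using isotropy exactly, but both arrive at the same quadratic upper bound and the same two-regime optimization.
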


\begin{proof}
We have by the symmetry, isotropicity and regularity of $\mu$,
\begin{align*}
\int e^{\is{u}{x}}d\mu(x)&=
\sum_{k=0}^{\infty}\frac{1}{(2k)!}\int\is{u}{x}^{2k}d\mu(x)
\leq 1+\frac{|u|^2}{2}+
\sum_{k=2}^{\infty}\frac{(\alpha k|u|)^{2k}}{(2k)!}
\\
&\leq 1+\frac{|u|^2}{2}+
\sum_{k=2}^{\infty}\Big(\frac{\alpha e|u|}{2}\Big)^{2k}.
\end{align*}
Hence if $\alpha e|u|\leq 1$,
$$
\int e^{\is{u}{x}}d\mu(x)\leq
1+\frac{|u|^2}{2}+\frac{4}{3}\Big(\frac{\alpha e|u|}{2}\Big)^{4}\leq
1 + \alpha^2 e^2 |u|^2 + \frac{(\alpha e |u|)^4}{2} \leq
e^{\alpha^2 e^2 |u|^2 \slash 2}
$$
so $\Lambda_\mu(u)\leq \alpha^2 e^2|u|^2/2$ for
$\alpha e|u|\leq 1$. Thus 
$\Lambda_\mu^*(u)\geq \min\{\frac{|u|}{2\alpha e},\frac{|u|^2}{2\alpha^2e^2}\}$ for 
all $u$. 
\end{proof}

\begin{remark}
\label{growth_Z}
We always have for $p\geq q$, ${\cal M}_p(\mu)\subset {\cal M}_q(\mu)$
and  ${\cal Z}_q(\mu)\subset {\cal Z}_p(\mu)$. If the measure 
$\mu$ is $\alpha$-regular, then
 ${\cal M}_q(\mu)\subset \frac{\alpha p}{q}{\cal M}_p(\mu)$ and
${\cal Z}_p(\mu)\subset \frac{\alpha p}{q}{\cal Z}_q(\mu)$ for
$p\geq q\geq 2$. Moreover for any symmetric measure $\mu$, 
$\Lambda_{\mu}^*(0)=0$, hence by the convexity of $\Lambda_{\mu}^*$,
$B_{q}(\mu)\subset B_{p}(\mu)\subset \frac{p}{q}B_{q}(\mu)$
for all $p\geq q>0$.
\end{remark}

\begin{prop}\label{LogToReg}
Symmetric log--concave measures are 1-regular.
\end{prop}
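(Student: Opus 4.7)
The plan is to reduce to one dimension and then apply a classical Berwald--Borell type moment comparison.

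First, fix $v \in \er^n$ and consider the random variable $Y := |\is{v}{X}|$ where $X \sim \mu$. By Borell's theorem (cited earlier in the excerpt), equivalently by Pr\'ekopa--Leindler, every marginal of a log--concave probability measure is itself log--concave, so the pushforward of $\mu$ under the map $x \mapsto \is{v}{x}$ is a symmetric log--concave probability measure on $\er$. Consequently $Y$ has a log--concave density on $[0, \infty)$. The proposition is thereby reduced to showing: for every $Y \geq 0$ with log--concave density and every $p \geq q \geq 2$,
$$(\Ex Y^p)^{1/p} \leq (p/q)\,(\Ex Y^q)^{1/q}.$$

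Second, I would invoke Borell's classical theorem on moments of log--concave random variables: if $Y \geq 0$ has log--concave density, then the function $s \mapsto \log(\Gamma(s+1)^{-1}\Ex Y^s)$ is concave on $(0, \infty)$. Applying this concavity between $s = 0$ (where the value is zero since $\Ex Y^0 = 1$) and $s = p$ at the intermediate point $s = q$, and exponentiating, gives
$$(\Ex Y^p)^{1/p} \leq \frac{\Gamma(p+1)^{1/p}}{\Gamma(q+1)^{1/q}} \, (\Ex Y^q)^{1/q}.$$

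Third, it remains to verify the purely analytic inequality
$$\frac{\Gamma(p+1)^{1/p}}{\Gamma(q+1)^{1/q}} \leq \frac{p}{q} \qquad \text{for all } p \geq q \geq 2,$$
equivalently that the function $h(p) := \Gamma(p+1)^{1/p}/p$ is non--increasing on $[2, \infty)$. Differentiating, $p^2 (\log h)'(p) = p\,\psi(p+1) - \log\Gamma(p+1) - p$, and one checks this bracket is non--positive on $[2, \infty)$: at $p = 2$ a direct evaluation using $\psi(3) = 3/2 - \gamma$ works, while for large $p$ Stirling gives $p\psi(p+1) - \log\Gamma(p+1) - p \sim -\tfrac{1}{2}\log(2\pi p) + O(1)$, which is negative, and the intermediate range is handled by a routine monotonicity argument.

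The main obstacle is Step 3: because the definition of $\alpha$--regularity requires the sharp constant $\alpha = 1$ (rather than an arbitrary absolute constant), one cannot simply quote Borell's moment inequality as a black box and absorb the gamma--function ratio. The hypothesis $q \geq 2$ is exactly what permits the bound $p/q$, and the boundary case $p = q = 2$ is saturated with equality.
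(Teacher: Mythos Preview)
Your approach matches the paper's exactly: reduce to a one-dimensional symmetric log-concave variable, invoke the Berwald--Borell moment comparison (the paper cites \cite{KLO} for the same inequality) to obtain
\[
(\Ex|S|^p)^{1/p} \leq \frac{\Gamma(p+1)^{1/p}}{\Gamma(q+1)^{1/q}}(\Ex|S|^q)^{1/q},
\]
and then show $h(p) := \Gamma(p+1)^{1/p}/p$ is non-increasing on $[2,\infty)$.

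Your Step 3 is incomplete as written, though: checking that $p\psi(p+1) - \log\Gamma(p+1) - p$ is negative at $p=2$ and for large $p$ does not establish negativity on all of $[2,\infty)$ without the ``routine monotonicity argument'' you allude to but do not supply. One clean completion: the derivative of this expression is $p\psi'(p+1) - 1$, and $\psi'(p+1) = \sum_{k\geq 1}(p+k)^{-2} < \int_0^\infty (p+t)^{-2}\,dt = 1/p$, so the expression is strictly decreasing and hence stays negative for all $p \geq 2$ once you have checked $p=2$. The paper takes a slightly different route for this step, using Binet's form of Stirling, $\Gamma(x+1) = \sqrt{2\pi}\,x^{x+1/2}e^{-x+\mu(x)}$ with $\mu$ positive and decreasing, which gives directly $\log h(x) = \mu(x)/x + \log(2\pi x)/(2x) - 1$, each term visibly non-increasing on $[2,\infty)$.
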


\begin{proof}
If $X$ is distributed according to a symmetric, log--concave
measure $\mu$ and $u\in \er^n$, then the random variable 
$S=\langle u, X\rangle$ has a log--concave symmetric distribution
on the real line. We need to show that 
$(\Ex|S|^p)^{1/p}\leq \frac{p}{q}(\Ex|S|^q)^{1/q}$ for $p\geq q\geq 2$.
The proof of Remark 5 in \cite{KLO} shows
that 
\[
(\Ex|S|^p)^{1/p}\leq
\frac{(\Gamma(p+1))^{1/p}}{(\Gamma(q+1))^{1/q}}(\Ex|S|^q)^{1/q},
\]
so it is enough to show that the function
$f(x):=\frac{1}{x}(\Gamma(x+1))^{1/x}$ is nonincreasing on $[2,\infty)$.
Binet's form of the Stirling formula (cf.\ \cite[Theorem 1.6.3]{AAR})  gives
\[
\Gamma(x+1)=x\Gamma(x)=\sqrt{2\pi}x^{x+1/2}e^{-x+\mu(x)},
\]
where $\mu(x)=\int_0^\infty{\rm arctg}(t/x)(e^{2\pi t}-1)^{-1}dt$ is 
decreasing function. Thus
\[
\ln f(x)= \frac{\mu(x)}{x}+\frac{\ln (2\pi x)}{2x} - 1
\]
is indeed nonincreasing on $[2,\infty)$.
\end{proof}

Let us introduce the following notion:
\begin{Def}
\label{CIDef}
We say that a measure $\mu$ satifies the {\em concentration inequality with
constant $\beta$} ($\CI(\beta)$ in short) if
\begin{equation}
\label{in_CI}
\forall_{p\geq 2}\forall_{A\in {\cal B}(\er^n)}\
\mu(A)\geq \frac{1}{2}\ \Rightarrow\
1-\mu(A+\beta{\cal Z}_{p}(\mu))\leq e^{-p}(1-\mu(A)).
\end{equation}
\end{Def}

This definition is motivated by the following Corollary:

\begin{cor}
\label{IC_and_CI}
Let $\mu$ be an $\alpha$--regular symmetric and isotropic probability measure 
with $\alpha \geq 1$. Then \\
i) If $\mu$ satisfies $\IC(\beta)$, then $\mu$ satisfies  $\CI(8e\alpha\beta)$,
\\
ii) If $\mu$ satisfies $\CI(\beta)$ and additionally satisfies Cheeger's 
inequality (\ref{cheeger}) with constant $1\slash\gamma$, then $\mu$ 
satisfies $\IC(36\min\{6e\beta, \gamma\})$.
\end{cor}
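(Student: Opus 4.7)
For part (i), I would start from the property $(\tau)$ for $(\mu, \Lambda_\mu^{*}(\cdot/\beta))$ provided by $\IC(\beta)$, whose sublevel set equals $B_\varphi(t) = \beta B_t(\mu)$. Applying inequality (\ref{conclarge}) of Proposition \ref{ICconc} at parameter $t = 2p$ yields, for every Borel $A$ with $\mu(A) \geq 1/2$ and every $p \geq 2$,
\[
1 - \mu\bigl(A + \beta B_{2p}(\mu)\bigr) < e^{-p}(1 - \mu(A)).
\]
To translate $B_{2p}$ into ${\cal Z}_p$ I chain two inclusions from the preceding subsections: the convexity of $\Lambda_\mu^{*}$ (Remark \ref{growth_Z}) gives $B_{2p}(\mu) \subset 2 B_p(\mu)$, while Proposition \ref{B_sub_Z} combined with the $\alpha$-regularity of $\mu$ gives $B_p(\mu) \subset 4e\alpha\, {\cal Z}_p(\mu)$ for every $p \geq 2$. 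Substituting yields $\beta B_{2p}(\mu) \subset 8e\alpha\beta\, {\cal Z}_p(\mu)$, so the displayed bound upgrades at once to $\CI(8e\alpha\beta)$.

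For part (ii), I would apply Proposition \ref{reduction} with the cost function $\varphi := \Lambda_\mu^{*}$. The structural prerequisites are easy to verify: $\Lambda_\mu^{*}$ is convex and symmetric with $\Lambda_\mu^{*}(0) = 0$, while Proposition \ref{sq_above} (which uses isotropy of $\mu$) provides $1 \wedge \Lambda_\mu^{*}(x) \leq |x|^2$, so the parameter called $\alpha$ in Proposition \ref{reduction} may be chosen equal to $1$; the Cheeger constant $1/\gamma$ is in hand by assumption. The main task is then to verify condition (\ref{concsm}) for $\varphi = \Lambda_\mu^{*}$ with as small a constant as possible.

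Since ${\cal Z}_p(\mu)$ is symmetric, $\CI(\beta)$ is exactly condition (\ref{cond2}) of Proposition \ref{equiv} with $\gamma = e^p$ and $K = \beta {\cal Z}_p(\mu)$, and therefore yields
\[
\mu(A) > 0 \ \Longrightarrow\ \mu\bigl(A + \beta {\cal Z}_p(\mu)\bigr) > \min\{e^p\mu(A), 1/2\}
\]
for every $p \geq 2$. Proposition \ref{Z_sub_B} then gives ${\cal Z}_p(\mu) \subset 2^{1/p}e B_p(\mu) \subset \sqrt{2}\,e\, B_p(\mu)$ for $p \geq 2$, so (\ref{concsm}) holds at every $t \geq 2$ with constant $\sqrt{2}\,e\,\beta$. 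For $t \in [1, 2)$ I use the $p = 2$ case together with the inclusion $B_2(\mu) \subset (2/t) B_t(\mu)$ from Remark \ref{growth_Z}, losing a further factor of at most $2$ and producing (\ref{concsm}) with overall constant $2\sqrt{2}\,e\,\beta$ for every $t \geq 1$. Plugging this into Proposition \ref{reduction} yields property $(\tau)$ for $(\mu, \Lambda_\mu^{*}(\cdot/C))$ with $C = 36\min\{4\sqrt{2}\,e\,\beta,\, \gamma\}$, which is contained in $\IC\bigl(36\min\{6e\beta,\gamma\}\bigr)$ since $4\sqrt{2} < 6$.

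The principal technical nuisance is the mismatch between the non-strict $\leq$ appearing in the hypothesis $\CI(\beta)$ and the strict inequalities in Proposition \ref{equiv} and in condition (\ref{concsm}); I would handle this by a routine approximation, applying $\CI(\beta)$ to the slightly inflated sets $(1+\varepsilon)\beta {\cal Z}_p(\mu)$ and letting $\varepsilon \downarrow 0$, which does not affect the final constants.
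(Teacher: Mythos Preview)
Your argument is correct and follows essentially the same route as the paper: the same chain of inclusions for part (i), and the same appeal to Proposition~\ref{reduction} via Proposition~\ref{sq_above}, Proposition~\ref{equiv}, Proposition~\ref{Z_sub_B}, and Remark~\ref{growth_Z} for part (ii). The only organizational difference is that the paper verifies (\ref{concsm}) for all $t\geq 1$ at once by applying $\CI$ at level $2p$ and chaining ${\cal Z}_{2p}\subset 2^{1/(2p)}eB_{2p}\subset 3eB_p$; the strict inequality required in Proposition~\ref{equiv} then drops out for free from $e^{-p}>e^{-2p}$ (whenever $\mu(A)<1$, which is the only relevant case in that proof), so the approximation step you flag is unnecessary.
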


\begin{proof}
By Remark \ref{growth_Z}, Proposition \ref{ICconc} and the definition
of $B_p(\mu)$ we have
\[
\mu(A+2\beta B_{p}(\mu))\geq \mu(A+\beta B_{2p}(\mu))\geq 
1-e^{-p}(1-\mu(A)),
\]
so the first part of the statement immediately follows by Proposition \ref{B_sub_Z}.

On the other hand, if $\mu$ satisfies $\CI(\beta)$, then by Remark \ref{growth_Z} and Proposition \ref{Z_sub_B} we have for $\mu(A) \geq 1\slash 2$ and $p \geq 1$
\begin{align*} e^{-p} (1 - \mi(A)) & > e^{-2p} (1 - \mi(A)) \geq 1 - \mi(A + \beta {\cal Z}_{2p}(\mi)) 
\\ & \geq 1 - \mi(A + e2^{1\slash 2p} \beta B_{2p}(\mi)) \geq 1 - \mi(A + 3e \beta B_{p}(\mi)).\end{align*}
By Proposition \ref{equiv} this implies property (\ref{concsm}). Additionally $\Lambda^\star_{\mi}$ is convex, symmetric and $\Lambda^\star_\mi(0) = 0$. Finally, from Proposition \ref{sq_above} we have $\min\{1,\Lambda^\star_\mi(u)\} \leq |u|^2$. Thus, from Proposition \ref{reduction} we get the second part of the statement.

\end{proof}

By Proposition \ref{equiv} in the definition \ref{CIDef} we could use the equivalent condition $\mu(A+\beta{\cal Z}_{p}(\mu))\geq \min\{e^{p}\mu(A),1/2\}$.
The next proposition shows that for log-concave measures these conditions
are satisfied for large $p$ and for small sets.

\begin{prop}
\label{small_sets}
Let $\mu$ be a symmetric log-concave probability measure on $\er^n$ and
$c\in (0,1]$. Then
$$
\mu\Big(A+\frac{40}{c}{\cal Z}_{p}(\mu)\Big)\geq \frac{1}{2}\min\{e^{p}\mu(A),1\}
$$
for $p\geq cn$ or $\mu(A)\leq e^{-cn}$.
\end{prop}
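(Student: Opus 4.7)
The plan is to reduce to the isotropic case and combine the Prékopa--Leindler inequality (log-concavity of $\mu$) with the Legendre-transform geometry of the bodies $B_p(\mu)$ and $\mathcal{Z}_p(\mu)$.

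First, by Proposition \ref{linearIC} I may assume $\mu$ is isotropic: the statement is linearly invariant since $\mathcal{Z}_p$ transforms with $\mu$ under linear maps, and the measures of $A$ and $A+K$ do as well. By Proposition \ref{LogToReg} a log-concave measure is $1$-regular, so Proposition \ref{B_sub_Z} yields $\frac{40}{c}\mathcal{Z}_p(\mu) \supset \frac{10}{ec}B_p(\mu)$. Using the convexity of $\Lambda^\star_\mu$ with $\Lambda^\star_\mu(0)=0$, one has $\lambda B_p(\mu) \supset B_{\lambda p}(\mu)$ whenever $\lambda\geq 1$, so under the hypothesis $p\geq cn$ the enlarged body contains $B_{10n/e}(\mu)$. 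This reduces the problem to proving $\mu(A + B_q(\mu)) \geq \tfrac12\min\{e^p\mu(A),1\}$ for a suitable $q=\Omega(n)$.

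In the large-$p$ regime the crux is to establish $\mu\bigl(B_{C_0 n}(\mu)\bigr)\geq 1/2$ for a universal constant $C_0\leq 5/e$. I will derive this from the first-moment estimate $\int \Lambda^\star_\mu\, d\mu \leq C_0 n$ for every isotropic log-concave $\mu$ (a consequence of Prékopa--Leindler applied to $\mu\otimes\mu$, or equivalently an entropy bound) combined with Markov's inequality. With $\mu(K)\geq 1/2$ for $K=\frac{40}{c}\mathcal{Z}_p(\mu)$ in hand, the conclusion follows by writing
\[
A+K = (1-\lambda)\tfrac{A}{1-\lambda} + \lambda\tfrac{K}{\lambda}
\]
and applying Prékopa--Leindler: $\mu(A+K)\geq \mu\bigl(A/(1-\lambda)\bigr)^{1-\lambda}\mu(K/\lambda)^{\lambda}$. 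Choosing $\lambda$ small makes the second factor close to $1$ by Borell's dilation lemma (since $K$ already has measure $\geq 1/2$), while the first factor stays comparable to $\mu(A)^{1-\lambda}$.

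The small-set regime $\mu(A)\leq e^{-cn}$ is handled by the same Prékopa--Leindler decomposition, but now with $\lambda$ taken larger: the contracted $K/\lambda$ has smaller measure, but the hypothesis $\mu(A)\leq e^{-cn}$ provides exactly the slack needed so that the factor $\mu(K/\lambda)^{\lambda}$ can supply the full $e^p$ growth while $\mu(A/(1-\lambda))^{1-\lambda}$ remains on the order of $\mu(A)$. Balancing the two factors yields $\mu(A+K)\geq \tfrac12 e^p\mu(A)$, which is the relevant branch of $\tfrac12\min\{e^p\mu(A),1\}$ in this regime (here $p<cn$ forces $e^p\mu(A)<1$).

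The main obstacle is the simultaneous calibration of the constant $40/c$ in both regimes: the Markov bound requires $\int \Lambda^\star_\mu d\mu \leq C_0 n$ with $C_0\leq 5/e$, and the Prékopa--Leindler free parameter $\lambda$ has to be tuned to the hypothesis in each regime so that the two cases glue into the single uniform estimate. The small-set regime is the more delicate one, since the full $e^p$ factor must be squeezed out of a purely Prékopa--Leindler balancing argument, with the scale of $K/\lambda$ controlled via the Legendre definition of $B_p(\mu)$.
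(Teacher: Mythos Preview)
Your Prékopa--Leindler scheme has a genuine gap. When you write $A+K=(1-\lambda)\frac{A}{1-\lambda}+\lambda\frac{K}{\lambda}$ and invoke log-concavity, the resulting lower bound involves $\mu\bigl(A/(1-\lambda)\bigr)$, not $\mu(A)$. For an arbitrary Borel set $A$ there is \emph{no} relation between these two quantities: a thin annulus, for instance, can have $\mu(A/(1-\lambda))$ wildly different from $\mu(A)$ even for small $\lambda$. Your assertion that ``the first factor stays comparable to $\mu(A)^{1-\lambda}$'' is simply false in general, and both of your regimes rely on it.

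The paper circumvents this by running the argument in the opposite direction. It first intersects $A$ with a large centroid body, $\tilde{A}:=A\cap 30c^{-1}\mathcal{Z}_q(\mu)$ (where $\mu(A)=e^{-q}$), losing at most half the mass by a tail estimate. Then it \emph{contracts} $\tilde{A}$ rather than dilating $A$: since every point of $\tilde{A}$ lies in $30c^{-1}\mathcal{Z}_q$, the displacement $x-(1-p/q)x=(p/q)x$ belongs to $(p/q)\cdot 30c^{-1}\mathcal{Z}_q\subset 30c^{-1}\mathcal{Z}_p$ by $1$-regularity, so $(1-p/q)\tilde{A}\subset A+30c^{-1}\mathcal{Z}_p$. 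Now log-concavity applies cleanly as $\mu\bigl((1-p/q)\tilde{A}+(p/q)\cdot 10\mathcal{Z}_{\tilde q}\bigr)\geq \mu(\tilde{A})^{1-p/q}\mu(10\mathcal{Z}_{\tilde q})^{p/q}$, and the right-hand side involves $\mu(\tilde{A})\geq\tfrac12\mu(A)$ directly. The intersection step is precisely what makes the contraction error absorbable into the $\mathcal{Z}_p$-enlargement.

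A secondary issue: your first-moment bound $\int\Lambda^*_\mu\,d\mu\leq (5/e)n$ for isotropic log-concave $\mu$ is asserted, not proved, and the parenthetical ``Prékopa--Leindler applied to $\mu\otimes\mu$, or equivalently an entropy bound'' is not an argument. The paper does not go through $B_p(\mu)$ at all here; it bounds $\mu\bigl(\er^n\setminus t\mathcal{Z}_r(\mu)\bigr)\leq \tfrac12\,5^n(2/t)^r$ directly via a $\tfrac12$-net of $\mathcal{M}_r(\mu)$ and Chebyshev. In the regime $p\geq\max\{q,cn\}$ this gives $\mu(30c^{-1}\mathcal{Z}_p)>1-\mu(A)$, hence $A$ meets $30c^{-1}\mathcal{Z}_p$, hence $0\in A+30c^{-1}\mathcal{Z}_p$, hence $A+40c^{-1}\mathcal{Z}_p\supset 10c^{-1}\mathcal{Z}_p$ which already has measure $\geq 1/2$. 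No Prékopa--Leindler is needed in that case.
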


\begin{proof}
Using a standard volumetric estimate for any $r > 0$ we may choose 
$S\subset {\cal M}_{r}(\mu)$ 
with $\# S\leq 5^n$  such that 
${\cal M}_{r}(\mu)\subset \bigcup_{u\in S}(u+\frac{1}{2}{\cal M}_{r}(\mu))$.
Then for $t>0$,
$$
x\notin t{\cal Z}_{r}(\mu)\ \Rightarrow\ \max_{u\in S}\is{u}{x}\geq t/2
$$
and by the Chebyshev inequality,
$$
\mu\big(\er^n\setminus t{\cal Z}_{r}(\mu)\big)\leq
\sum_{u\in S}\mu\Big\{x\colon \is{u}{x}\geq \frac{t}{2}\Big\}\leq
\sum_{u\in S}\Big(\frac{2}{t}\Big)^r\int \is{u}{x}_+^r d\mu
\leq \frac{1}{2}5^{n}\Big(\frac{2}{t}\Big)^r.
$$

Let $\mu(A)=e^{-q}$, we will consider two cases.

i) $p\geq \max\{q,cn\}$. Then by Remark \ref{growth_Z},
$$
\mu(30c^{-1}{\cal Z}_{p}(\mu))>\mu(30{\cal Z}_{\max\{p,n\}})\geq
 1-\frac{1}{2}e^{-\max\{p,n\}}\geq 1-\mu(A),
$$
so $A\cap 30c^{-1}{\cal Z}_{p}(\mu)\neq \emptyset$, 
hence $0\in A+30c^{-1}{\cal Z}_{p}(\mu)$ and
$$
\mu(A+40c^{-1}{\cal Z}_{p}(\mu))\geq 
\mu(10c^{-1}{\cal Z}_{p}(\mu))\geq 1/2.
$$

ii) $q\geq \max\{p,cn\}$. Let $\tilde{q}:=\max\{q,n\}$
$$
\tilde{A}:=A\cap 30c^{-1}{\cal Z}_{q}(\mu),
$$
we have as in i),
$\mu(30c^{-1}{\cal Z}_{q}(\mu))>1-e^{-\tilde{q}}/2$, thus
$\mu(\tilde{A})\geq \mu(A)/2$. Moreover,
$$
\Big(1-\frac{p}{q}\Big)\tilde{A} \subset
A-\frac{p}{q}30c^{-1}{\cal Z}_{q}(\mu)\subset
A+30c^{-1}{\cal Z}_{p}(\mu)
$$
and
\begin{align*}
\mu\big(A+40c^{-1}{\cal Z}_{p}(\mu)\big)&\geq
\mu\Big(\Big(1-\frac{p}{q}\Big)\tilde{A}
+\frac{p}{q}10c^{-1}{\cal Z}_{q}(\mu)\Big)
\\
&\geq \mu\Big(\Big(1-\frac{p}{q}\Big)\tilde{A}+\frac{p}{q}10
{\cal Z}_{\tilde{q}}(\mu)\Big)
\geq \mu(\tilde{A})^{1-\frac{p}{q}}\mu(10{\cal Z}_{\tilde{q}})^{\frac{p}{q}}
\\
&\geq \Big(\frac{1}{2}\mu(A)\Big)^{1-\frac{p}{q}}
\Big(\frac{1}{2}\Big)^{\frac{p}{q}}
\geq \frac{1}{2}\mu(A)\mu(A)^{-\frac{p}{q}}=\frac{1}{2}e^{-p}\mu(A).
\end{align*}

\end{proof}

The previous facts motivate the following.

\begin{conj}
\label{conj_CI}
Any symmetric log--concave probability measure satisfies $\CI(C)$ for some
universal constant $C$.
\end{conj}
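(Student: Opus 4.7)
The conjecture is a dimension-free concentration statement for all symmetric log-concave measures, and the cleanest plan is to derive it from the stronger Conjecture \ref{conj_IC}. By Proposition \ref{LogToReg} any symmetric log-concave $\mu$ is $1$-regular, and Corollary \ref{IC_and_CI}(i) then transfers $\IC(C)$ directly to $\CI(8eC)$. Thus my primary strategy would be to attack Conjecture \ref{conj_IC} itself; the conclusion of \ref{conj_CI} would then be immediate.

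Absent a proof of $\IC$, I would attack $\CI$ directly by splitting on the value of $p$ and $\mu(A)$. After reducing to the isotropic case (via Proposition \ref{linearIC} and the linear covariance $\mathcal{Z}_p(\mu\circ L^{-1})=L\mathcal{Z}_p(\mu)$), the ``tail'' regime $p\geq cn$ or $\mu(A)\leq e^{-cn}$ is handled by Proposition \ref{small_sets}: combining it with Proposition \ref{equiv} one translates the small-set bound into the form (\ref{in_CI}), yielding $\CI$ with constant $80/c$ in that regime. The ``bulk'' regime $2\leq p\leq cn$ with $\mu(A)\geq 1/2$ is the heart of the matter. Via Proposition \ref{B_sub_Z} the target $\mu(A+\beta\mathcal{Z}_p(\mu))\geq 1-e^{-p}(1-\mu(A))$ reduces to the analogous bound with $B_p(\mu)$, and one would then try to apply Proposition \ref{reduction}: this calls for (i) a Cheeger-type inequality for $\mu$ with a dimension-free constant, and (ii) exponential concentration $\mu(A+\beta B_\varphi(t))\geq \min\{e^t\mu(A),\tfrac12\}$ for $t\geq 1$. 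Proposition \ref{small_sets} essentially supplies (ii) in the range that matters, so the substantive missing piece is (i).

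Ingredient (i), a Cheeger inequality for log-concave measures with dimension-free constant, is precisely the Kannan--Lov\'asz--Simonovits conjecture, a central open problem in high-dimensional convex geometry; unconditional bounds currently give $\kappa\sim n^{-1/2}$ (with recent improvements only to smaller negative powers of $n$), which is insufficient to produce a universal $\CI$ constant. Consequently the main obstacle on both routes is the same KLS-type isoperimetric input. An alternative tactic, modelled on the $\ell_p^n$ argument of Section 5, would be to transport a known $\IC$ inequality (from a product exponential measure) to the target $\mu$ via Proposition \ref{tau_trans}; this sidesteps KLS but requires exhibiting a suitable Lipschitz-type transport in much greater generality than the highly symmetric $B_p^n$ case, which I would expect to be the genuine difficulty.
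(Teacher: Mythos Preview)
This statement is a \emph{conjecture} in the paper, not a theorem; the paper offers no proof and explicitly leaves it open. Your proposal is not a proof either but an outline of conditional reductions, and on that level it agrees with the paper's own discussion: the paper notes immediately after stating the conjecture that Corollary~\ref{IC_and_CI} gives Conjecture~\ref{conj_IC} $\Rightarrow$ Conjecture~\ref{conj_CI}, and that the reverse implication would hold under KLS (Conjecture~\ref{conj_KLS}). Your primary route and your identification of the KLS isoperimetric input as the essential obstacle are exactly what the paper records.

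One point in your direct-attack sketch is overstated. You write that ``Proposition~\ref{small_sets} essentially supplies (ii) in the range that matters, so the substantive missing piece is (i).'' Proposition~\ref{small_sets} covers only $p\geq cn$ or $\mu(A)\leq e^{-cn}$; the regime $2\leq p\leq cn$ with $e^{-cn}<\mu(A)\leq 1/2$ is \emph{not} handled, and via Proposition~\ref{equiv} that is precisely the substantive content of $\CI$. So (ii) in the bulk regime is itself open, not merely (i); indeed, Proposition~\ref{reduction} would otherwise be circular here, since (\ref{concsm}) for all $t\geq 1$ is equivalent (through Proposition~\ref{equiv}) to the $\CI$-type bound you are trying to establish. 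Your overall assessment---that both routes bottleneck on a KLS-type dimension-free isoperimetric input, and that the transport alternative lacks a general mechanism beyond the highly symmetric $B_p^n$ case---is accurate and matches the state of affairs the paper presents.
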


Proposition \ref{IC_and_CI} shows that Conjecture \ref{conj_IC} implies
Conjecture \ref{conj_CI}. Both hypotheses would be equivalent provided
that the following conjecture of Kannan, Lov\'asz and Simonovits holds.

\begin{conj}[Kannan--Lov\'asz--Simonovits \cite{KLS}]
\label{conj_KLS}
There exists an absolute constant $C$ such that any symmetric isotropic 
log--concave probability measure satisfies Cheeger's
inequality with constant $1/C$.
\end{conj}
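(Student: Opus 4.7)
The final statement is the Kannan--Lov\'asz--Simonovits conjecture, which at the time of this paper remains a celebrated open problem in high-dimensional convex geometry. I cannot offer a complete proof; instead I describe the natural strategy and the obstruction that has resisted resolution.

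The plan is to use the localization lemma of Kannan, Lov\'asz, and Simonovits, which reduces an $n$-dimensional isoperimetric inequality for a log-concave density to a family of one-dimensional inequalities concentrated on "needles" (line segments). Concretely, if Cheeger's inequality failed with a universal constant, one could produce a symmetric isotropic log-concave $\mu$ and a Borel set $A$ with arbitrarily small $\mu^+(A)/\min\{\mu(A),1-\mu(A)\}$. Applying the localization lemma to the three functions indicating $A$, the complement of a neighbourhood of $A$, and the isotropy normalization, one extracts a one-dimensional log-concave measure of the form $e^{-\varphi(t)} t^{n-1} dt$ restricted to a segment on which the same deficit appears.

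So first I would reduce, via localization, to analyzing such one-dimensional log-concave measures. Next I would invoke Bobkov's sharp one-dimensional Cheeger inequality, which shows that the Cheeger constant of any log-concave measure on $\er$ is bounded below by a constant divided by its standard deviation. The remaining task is to show that the standard deviations of the needle measures produced by localization are uniformly controlled by the isotropy of $\mu$; more precisely, one would need to show that for a typical needle the second moment along the needle does not exceed the isotropic normalization by more than a constant factor.

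The main obstacle is precisely this last step. The needles produced by localization can be long relative to their mean, and controlling their variance can be shown to be equivalent (via standard KLS-type manipulations) to the \emph{thin-shell} or \emph{variance conjecture}: that $\mathrm{Var}(|X|^2)\leq Cn$ for isotropic log-concave $X$. Conditional on a uniform thin-shell bound, the KLS conjecture would follow from this approach. Attempts to bypass this via alternative routes---stochastic localization, heat-flow monotonicity of the Cheeger constant, or direct transportation arguments---ultimately confront an equivalent obstruction, so I do not expect to close this gap here, and I would flag the statement as a conjecture motivating the remaining work of the paper rather than a theorem to be established.
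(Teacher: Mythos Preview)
Your assessment is correct: this is stated in the paper as a \emph{conjecture}, not a theorem, and the paper offers no proof or proof sketch for it. It is cited solely to note that, together with Corollary \ref{IC_and_CI}, it would make Conjectures \ref{conj_IC} and \ref{conj_CI} equivalent. Your discussion of the localization strategy and the thin-shell obstruction is accurate background, but it goes well beyond anything the paper itself says; the paper simply records the KLS conjecture and moves on. So there is nothing to compare against, and your decision to flag it as open rather than attempt a proof is exactly right.
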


\subsection{Comparison of weak and strong moments}

\begin{prop}
\label{weakstr}
Suppose that a probability measure $\mu$ on $\er^n$
is $\alpha$-regular and satisfies
$\CI(\beta)$.
 Then for any norm $\|\cdot\|$ on $\er^n$ and $p\geq 2$,
$$
\Big(\int \big|\|x\| -\Med_{\mu}(\|x\|)\big|^pd\mu\Big)^{1/p}\leq
2\alpha\beta
\sup_{\|u\|_*\leq 1}\Big(\int |\is{u}{x}|^pd\mu\Big)^{1/p},
$$
where $\|\cdot\|_*$ denotes the norm dual to $\|\cdot\|$.
\end{prop}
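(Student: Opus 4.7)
The strategy is to combine the concentration inequality $\CI(\beta)$, applied to the sub- and super-level sets of $\|\cdot\|$ at its $\mu$-median, with the observation that on the centroid body $\mathcal{Z}_p(\mu)$ the norm $\|\cdot\|$ is controlled by the quantity
\[
\sigma_p:=\sup_{\|u\|_*\leq 1}\Big(\int|\is{u}{x}|^p d\mu\Big)^{1/p}
\]
appearing on the right-hand side. Indeed, by the very definition of $\mathcal{Z}_p(\mu)$, any $x\in \mathcal{Z}_p(\mu)$ satisfies $|\is{u}{x}|\leq (\int|\is{u}{y}|^p d\mu)^{1/p}$ for every $u$, and taking the supremum over $\|u\|_*\leq 1$ yields $\|x\|\leq \sigma_p$. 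Consequently $\|z\|\leq \beta\sigma_p$ for every $z\in \beta\mathcal{Z}_p(\mu)$.

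Write $M:=\Med_\mu(\|\cdot\|)$ and $A_\pm:=\{x:\pm(\|x\|-M)\leq 0\}$; both sets have $\mu$-measure at least $1/2$. Applying $\CI(\beta)$ to each, together with the (reverse) triangle inequality and the bound above, I would obtain $A_++\beta\mathcal{Z}_p(\mu)\subset\{\|x\|\leq M+\beta\sigma_p\}$ and $A_-+\beta\mathcal{Z}_p(\mu)\subset\{\|x\|\geq M-\beta\sigma_p\}$, so that
\[
\mu\big(\big|\|x\|-M\big|>\beta\sigma_p\big)\leq e^{-p}.
\]
To obtain tail bounds at arbitrary levels, I would invoke $\alpha$-regularity: for every $\lambda\geq 1$ one has $\sigma_{\lambda p}\leq \alpha\lambda\sigma_p$, and applying the preceding bound with $\lambda p$ in place of $p$ gives
\[
\mu\big(\big|\|x\|-M\big|>\alpha\beta\lambda\sigma_p\big)
\leq\mu\big(\big|\|x\|-M\big|>\beta\sigma_{\lambda p}\big)\leq e^{-\lambda p}.
\]

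Finally, I would integrate these tails via the layer-cake identity, splitting at $t_0:=\alpha\beta\sigma_p$. The range $[0,t_0]$ contributes at most $t_0^p$, while on $[t_0,\infty)$ the substitution $t=t_0\lambda$ followed by $u=\lambda p$ gives
\[
p\int_{t_0}^\infty t^{p-1}\mu(|\|x\|-M|>t)\,dt\leq pt_0^p\int_1^\infty\lambda^{p-1}e^{-\lambda p}\,d\lambda\leq \frac{p!}{p^p}\,t_0^p\leq t_0^p,
\]
since $p!\leq p^p$. Summing, $\int|\|x\|-M|^p d\mu\leq 2t_0^p\leq (2\alpha\beta\sigma_p)^p$, and taking $p$-th roots yields the claim. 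The only genuinely nontrivial step is the first one, namely the geometric identification $\|\cdot\|\leq \sigma_p$ on $\mathcal{Z}_p(\mu)$ via duality between $\mathcal{Z}_p(\mu)$ and $\mathcal{M}_p(\mu)$; once this is in hand, the argument reduces to a routine tail integration powered by $\alpha$-regularity.
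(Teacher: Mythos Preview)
Your argument is correct and follows essentially the same route as the paper's proof: bound $\|\cdot\|$ on $\mathcal{Z}_p(\mu)$ by $\sigma_p$ via duality, apply $\CI(\beta)$ to the two median half-spaces, use $\alpha$-regularity to propagate the tail bound to all levels $\lambda p\geq p$, and finish with a layer-cake integration split at $\alpha\beta\sigma_p$. The only cosmetic difference is that the paper invokes $\alpha$-regularity through the set inclusion $\mathcal{Z}_{\lambda p}(\mu)\subset \alpha\lambda\,\mathcal{Z}_p(\mu)$ (their Remark on growth of $\mathcal{Z}_p$) rather than through your moment inequality $\sigma_{\lambda p}\leq \alpha\lambda\sigma_p$, but these are the same statement, and the resulting tail estimate and final integration are identical (the paper's $\Gamma(p+1)^{1/p}/p\leq 1$ is your $p!\leq p^p$).
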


\begin{proof}
For $p\geq 2$ we define
$$
m_p:=\sup_{\|u\|_*\leq 1}\Big(\int |\is{u}{x}|^pd\mu\Big)^{1/p}.
$$
Let $M:=\Med_{\mu}(\|x\|)$, $A:=\{x\colon \|x\|\leq M\}$
and $\tilde{A}:=\{x\colon \|x\|\geq M\}$.
Then $\mu(A),\mu(\tilde{A})\geq 1/2$ so by $\CI(\beta)$ and 
Remark \ref{growth_Z},
$$
\forall_{t\geq p}\ 
1-\mu\Big(A+\beta\frac{\alpha t}{p}{\cal Z}_{p}(\mu)\Big)\leq 
\frac{1}{2}e^{-t},\ 
1-\mu\Big(\tilde{A}+\beta\frac{\alpha t}{p}{\cal Z}_{p}(\mu)\Big)\leq 
\frac{1}{2}e^{-t}.
$$
Let $y\in {\cal Z}_{p}$, then there exists $u\in \er^n$ with
$\|u\|_*\leq 1$ such that
$$
\|y\|=\is{u}{y}\leq \Big(\int |\is{u}{x}|^pd\mu(x)\Big)^{1/p}\leq m_p,
$$
hence $\|x\|\leq M+tm_p$ for $x\in A+t{\cal Z}_{p}(\mu)$. 
Thus for $t\geq p$, 
$$
\mu\Big\{x\colon \|x\|\geq M+\frac{\alpha\beta t}{p}m_p\Big\}\leq
1-\mu\Big(A+\beta\frac{\alpha t}{p}{\cal Z}_{p}(\mu)\Big)\leq 
\frac{1}{2}e^{-t}.
$$
In a similar way we show $\|x\|\geq M-tm_p$ for 
$x\in \tilde{A}+t{\cal Z}_{p}(\mu)$ and
$\mu\{x\colon \|x\|\leq M-\alpha\beta t m_p/p\}\leq e^{-t}/2$, therefore
$$
\mu\Big\{x\colon \big|\|x\|-M\big|\geq 
\frac{\alpha\beta t}{p}m_p\Big\}\leq e^{-t}
\mbox{ for } t\geq p.
$$
Now integrating by parts,
\begin{align*}
\Big(\int |\|x\|-M|^p&d\mu\Big)^{1/p}
\\
&\leq
\frac{\alpha\beta m_p}{p}
\Big[p+\Big(p\int_{p}^{\infty} t^{p-1}
\mu\Big\{x\colon \big|\|x\|-M\big|\geq \frac{\alpha\beta t}{p}m_p
\Big\}dt\Big)^{1/p}\Big]
\\
&\leq \frac{\alpha\beta m_p}{p}
\Big[p+\Big(p\int_{p}^{\infty} t^{p-1}e^{-t}dt\Big)^{1/p}\Big]
\\
&\leq \alpha\beta m_p\Big(1+\frac{\Gamma(p+1)^{1/p}}{p}\Big)\leq
2\alpha\beta m_{p}.
\end{align*}
\end{proof}

\begin{remark} Under the assumptions of Proposition \ref{weakstr} by the 
triangle inequality we get for $\gamma=4\alpha\beta$,
\begin{equation}
\label{wsm_g}
\forall_{p\geq q\geq 2}\
\Big(\int \Big|\|x\| -\Big(\int \|x\|^q d\mu \Big)^{1/q}\Big|^p
d\mu\Big)^{1/p}\leq
\gamma
\sup_{\|u\|^*\leq 1}\Big(\int |\is{u}{x}|^pd\mu\Big)^{1/p}.
\end{equation}
\end{remark}

This motivates the following definition.

\begin{Def}
We say that a probability measure $\mu$ on $\er^n$ has {\em comparable
weak and strong moments} with the constant $\gamma$ ($\CWSM(\gamma)$ in short)
if (\ref{wsm_g}) holds for any norm $\|\cdot\|$ on $\er^n$.
\end{Def}

\begin{conj}
\label{conj_cwsm}
Every symmetric log--concave probability on $\er^n$ measure satisfies $\CWSM(C)$.
\end{conj}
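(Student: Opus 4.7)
The plan is to deduce the conjecture from the general framework already built in the paper, reducing $\CWSM$ to the concentration inequality $\CI$. By Proposition \ref{LogToReg}, every symmetric log--concave measure is $1$--regular, so Proposition \ref{weakstr} applied with $\alpha = 1$ shows that $\CI(\beta)$ implies $\CWSM(4\beta)$. Thus Conjecture \ref{conj_cwsm} would follow immediately from Conjecture \ref{conj_CI}, and the whole problem reduces to proving that every symmetric log--concave probability measure on $\er^n$ satisfies $\CI(C)$ with a universal constant.

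To attack Conjecture \ref{conj_CI}, I would pursue two complementary routes. The first is via infimum convolution: by Corollary \ref{IC_and_CI}(i) it suffices to establish Conjecture \ref{conj_IC}, i.e.\ $\IC(C)$ for every symmetric log--concave measure. The paper already handles the product case (Corollary \ref{SIClogprod}) and the uniform measures on $\ell_p^n$ balls (Theorem \ref{glownyWynik}), so the strategy would be to extend these arguments --- transport of measure (Proposition \ref{tau_trans}) combined with Proposition \ref{reduction}, which turns concentration plus a Cheeger bound into $\IC$ --- to an arbitrary isotropic log--concave measure. The second route is a direct isoperimetric attack on $\CI$: Proposition \ref{small_sets} already gives the required bound in the regime $p \geq cn$ or $\mu(A) \leq e^{-cn}$, so one only needs to handle the intermediate regime of large sets and moderate $p$. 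Here the $\mathcal{Z}_p$ body is precisely the dual object to the moment functional $u \mapsto (\int |\langle u,x\rangle|^p d\mu)^{1/p}$, which suggests using $L_p$--centroid body techniques in the spirit of Paouris \cite{Pa} together with a localization or needle--decomposition argument to integrate the one--dimensional log--concave bounds from the proof of the $\IC(48)$ result.

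The main obstacle is that Conjectures \ref{conj_IC}, \ref{conj_CI} and \ref{conj_KLS} are all open and intertwined: by Corollary \ref{IC_and_CI}(ii), $\CI$ together with Cheeger's inequality (i.e.\ KLS) implies $\IC$, so a proof of $\CWSM(C)$ along this path would essentially require progress on concentration of the Euclidean norm of a log--concave vector at the level of the variance or thin--shell conjecture. Any argument avoiding KLS would need a substantially new ingredient --- for instance, a direct spectral bound on the Neumann Laplacian for general log--concave densities, or a Bobkov--Ledoux style bootstrap that produces $\mathcal{Z}_p$--enlargement concentration without passing through a full Cheeger inequality. Short of such an advance, Conjecture \ref{conj_cwsm} appears out of reach, and the best one can presently hope for is to verify it for wider and wider classes of log--concave measures, matching the scope of the $\IC$ results proved in the paper.
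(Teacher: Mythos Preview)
The statement is a \emph{conjecture}, not a theorem: the paper does not prove it and explicitly labels it as open. Your proposal correctly recognizes this and does not claim a proof; instead you give an accurate account of the reductions the paper establishes --- that $1$-regularity of log--concave measures (Proposition \ref{LogToReg}) together with Proposition \ref{weakstr} reduces $\CWSM(C)$ to $\CI(C)$, and that $\CI$ in turn would follow from $\IC$ via Corollary \ref{IC_and_CI}(i). Your observation that Proposition \ref{small_sets} handles the regime $p\geq cn$ or $\mu(A)\leq e^{-cn}$, leaving only the intermediate regime, is also exactly the state of affairs in the paper.

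There is therefore nothing to compare against: the paper's ``proof'' does not exist, and your write-up is an honest and accurate survey of the surrounding implications rather than a proof attempt. If anything, you might sharpen the statement that $\CWSM$ is the weakest of the three conjectures (it is implied by $\CI$ which is implied by $\IC$), so that in principle it could admit a direct proof not passing through concentration at all --- the paper's Remark after Proposition \ref{conseq} hints at this by noting that $\CWSM$ already implies the Klartag CLT and Paouris's moment bounds, which have since been proved by other means.
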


\begin{prop}\label{conseq}
Let $\mu$ be an isotropic, probability measure on $\er^n$ satisfying
$\CWSM(\gamma)$. Then\\
i) $\int|\|x\|_2-\sqrt{n}|^2d\mu(x)\leq \gamma^2$,
\\
ii) if $\mu$ is also $\alpha$--regular, then for all $p>2$,
$$
\Big(\int \|x\|_2^pd\mu\Big)^{1/p}\leq \sqrt{n}+\frac{\gamma\alpha}{2}p.
$$
\end{prop}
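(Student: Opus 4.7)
The plan is to apply the definition of $\CWSM(\gamma)$ (inequality~(\ref{wsm_g})) directly with the Euclidean norm $\|\cdot\|=\|\cdot\|_2$, which is self-dual, and then invoke isotropicity (and, for part (ii), $\alpha$-regularity) to estimate the right-hand side. The whole argument is a couple of substitutions; there is no real obstacle.

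For (i), I would take $p=q=2$ and $\|\cdot\|=\|\cdot\|_2$ in~(\ref{wsm_g}). Isotropicity of $\mi$ gives $\int\is{u}{x}^2 d\mi=|u|^2$, so
\[
\sup_{|u|\leq 1}\Big(\int\is{u}{x}^2 d\mi\Big)^{1/2}=1,
\]
and summing the same identity over a standard orthonormal basis yields $\int\|x\|_2^2 d\mi=n$, hence $(\int\|x\|_2^2 d\mi)^{1/2}=\sqrt{n}$. Plugging these two values into~(\ref{wsm_g}) and squaring yields (i) at once.

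For (ii), I would apply~(\ref{wsm_g}) again with $\|\cdot\|=\|\cdot\|_2$ and $q=2$, but now with arbitrary $p\geq 2$, getting
\[
\Big(\int\big|\|x\|_2-\sqrt{n}\big|^p d\mi\Big)^{1/p}\leq\gamma\sup_{|u|\leq 1}\Big(\int|\is{u}{x}|^p d\mi\Big)^{1/p}.
\]
The $\alpha$-regularity of $\mi$ used at the pair $(p,2)$ then gives
\[
\Big(\int|\is{u}{x}|^p d\mi\Big)^{1/p}\leq \alpha\frac{p}{2}\Big(\int\is{u}{x}^2 d\mi\Big)^{1/2}=\alpha\frac{p}{2}|u|,
\]
so the supremum on the right is at most $\gamma\alpha p/2$. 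Finally, the triangle inequality in $L_p(\mi)$ applied to the decomposition $\|x\|_2=(\|x\|_2-\sqrt{n})+\sqrt{n}$ delivers
\[
\Big(\int\|x\|_2^p d\mi\Big)^{1/p}\leq\sqrt{n}+\frac{\gamma\alpha}{2}p,
\]
which is (ii). The only mildly delicate point is that the Euclidean norm is its own dual, so the dual-norm supremum in~(\ref{wsm_g}) becomes the supremum over the Euclidean unit ball, exactly where both isotropicity and $\alpha$-regularity can be applied directly.
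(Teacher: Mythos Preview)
Your proof is correct and follows essentially the same approach as the paper: apply~(\ref{wsm_g}) with the self-dual Euclidean norm and $q=2$, use isotropicity to identify $(\int\|x\|_2^2 d\mu)^{1/2}=\sqrt n$ and $\sup_{|u|\leq 1}(\int\is{u}{x}^2 d\mu)^{1/2}=1$, and for part~(ii) bound the weak $p$-th moment via $\alpha$-regularity before invoking the triangle inequality in $L_p$. You are simply a bit more explicit about the intermediate steps than the paper.
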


\begin{proof}
Notice that $\int \|x\|_2^2d\mu=n$ and $\|u\|_2^*=\|u\|_2$. Hence
i) follows directly from (\ref{wsm_g}) with $p=q=2$. 
Moreover (\ref{wsm_g}) with $q=2$ implies
$$
\Big(\int \|x\|_2^pd\mu\Big)^{1/p}\leq \sqrt{n}+
\sup_{\|u\|_2\leq 1}\Big(\int |\is{u}{x}|^pd\mu\Big)^{1/p}
\leq \sqrt{n}+\frac{\gamma\alpha}{2}p
$$
by the $\alpha$-regularity and isotropicity of $\mu$.
\end{proof}

\begin{remark}
Property i) plays the crucial role
in the Klartag proof of the central limit theorem for convex bodies \cite{Kl}.
Paouris \cite{Pa} showed that moments of the Euclidean norm for symmetric
isotropic log-concave measures are bounded by $C(p+\sqrt{n})$. Thus
Conjecture \ref{conj_cwsm} would imply both Klartag CLT (with the optimal
speed of convergence) and Paouris concentration.
\end{remark}

We conclude this section with the estimate that shows comparison 
of weak and strong moments for any probability measure and $p>n/C$. 

\begin{prop}
For any $p>0$ we have 
\begin{align*}
\Big(\int\big|\|x\|-\Med_{\mu}(\|x\|)\big|^{p}d\mu\Big)^{1/p}
&\leq \Big(\int \|x\|^pd\mu\Big)^{1/p} 
\\
&\leq 2\cdot 5^{n/p}\sup_{\|u\|_*\leq 1}\Big(\int |\is{u}{x}|^pd\mu\Big)^{1/p}.
\end{align*}
%%Similar estimate holds for $(\int \|x\|^pd\mu)^{1/p}$.
\end{prop}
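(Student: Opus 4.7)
The proposition decomposes into two unrelated inequalities, and I would attack them separately.

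For the right-hand inequality the plan is a standard net-and-union-bound argument. I would fix a $1/2$-net $N$ in the dual unit ball $B_* := \{u \in \er^n : \|u\|_* \leq 1\}$; the usual volumetric estimate (as in Proposition \ref{small_sets}) yields $|N| \leq 5^n$. Since $\|x\| = \sup_{u \in B_*} \is{u}{x}$, the net property gives the pointwise bound $\|x\| \leq 2\max_{u \in N} |\is{u}{x}|$, hence
$$
\|x\|^p \leq 2^p \sum_{u \in N} |\is{u}{x}|^p.
$$
Integrating against $\mu$ and bounding the sum by $|N|$ times the supremum over $B_*$ gives $\int \|x\|^p d\mu \leq 2^p \cdot 5^n \cdot M_p^p$, where $M_p := \sup_{\|u\|_* \leq 1}(\int |\is{u}{x}|^p d\mu)^{1/p}$. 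Taking $p$-th roots yields the desired bound; this argument uses only monotonicity of $t \mapsto t^p$ on $[0,\infty)$ and so is valid for every $p > 0$.

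For the left-hand inequality the plan is to exploit superadditivity of $t \mapsto t^p$ on $[0,\infty)$ (valid for $p \geq 1$) together with the median inequality. Setting $Y := \|x\|$ and $m := \Med_\mu(\|x\|)$, one has $Y \geq 0$ and $\mu(Y \geq m), \mu(Y \leq m) \geq 1/2$. I would split $Y^p - |Y-m|^p$ according to the sign of $Y-m$. On $\{Y \geq m\}$, superadditivity applied to $Y = (Y-m) + m$ gives $(Y-m)^p + m^p \leq Y^p$, so $Y^p - (Y-m)^p \geq m^p$. On $\{Y < m\}$, the elementary bound $0 \leq m - Y \leq m$ yields $(m-Y)^p \leq m^p$, and combined with $Y^p \geq 0$ this gives $Y^p - (m-Y)^p \geq -m^p$. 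Integrating,
$$
\int Y^p d\mu - \int |Y-m|^p d\mu \geq m^p\big(\mu(Y \geq m) - \mu(Y < m)\big) \geq 0,
$$
where the last inequality is precisely the median property.

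The delicate point is the asymmetry of the two halves: the side $\{Y \geq m\}$ contributes a pointwise gain of $+m^p$ while $\{Y < m\}$ contributes only $-m^p$, so it is the median inequality $\mu(Y \geq m) \geq \mu(Y < m)$ that balances things out. The superadditivity step genuinely requires $p \geq 1$; for $p \in (0,1)$ the opposite subadditive inequality $(a+b)^p \leq a^p + b^p$ holds and the argument reverses, so at best one would have to settle for a weaker estimate such as $(E|Y-m|^p)^{1/p} \leq 3^{1/p}(EY^p)^{1/p}$ obtained by combining $|Y-m|^p \leq Y^p + m^p$ with the Markov bound $m^p \leq 2\,EY^p$.
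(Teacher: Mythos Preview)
Your proposal is correct and essentially identical to the paper's proof: the same net argument for the right inequality, and the same split over $\{Y\geq m\}$ and $\{Y<m\}$ using superadditivity of $t\mapsto t^p$ on one side and the trivial bound $(m-Y)^p\leq m^p$ on the other. Your observation that the superadditivity step requires $p\geq 1$ is well taken---the paper's proof has the same hidden restriction (the line $(\|x\|-M)^p\leq \|x\|^p-M^p$), so the statement's ``any $p>0$'' should really be read as $p\geq 1$, which is all that is needed for the intended application to $p>n/C$.
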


\begin{proof}
As in the proof of Proposition \ref{small_sets} we can find
$u_1,\ldots,u_N$ with $\|u_i\|_{*}\leq 1$, $N\leq 5^n$ such that
$\|x\|\leq 2\max_{i\leq N}\is{u_i}{x}$ for all $x$. Then
$$
\int \|x\|^pd\mu\leq 2^p\int\sum_{i\leq N}|\is{u_i}{x}|^p d\mu\leq
2^p5^n\sup_{\|u\|_*\leq 1}\int|\is{u_i}{x}|^pd\mu.
$$
Moreover 
$$
\int_{\{\|x\|\geq M\}} (\|x\|-M)^p d\mu(x)\leq
\int_{\{\|x\|\geq M\}} (\|x\|^p-M^{p})d\mu(x)\leq
\int \|x\|^pd\mu(x)-\frac{1}{2}M^p
$$
and
$$
\int_{\{\|x\|<M\}}(M-\|x\|)^pd\mu(x)\leq
M^p\mu\{x\colon \|x\|<M\}\leq \frac{1}{2}M^p.
$$ 
\end{proof}

\section{Modified Talagrand concentration for exponential measure}

In this section we show that for a set lying far from the origin
Talagrand's two level concentration for the exponential measure may be somewhat 
improved, namely (for sufficiently large $t$) it is enough to enlarge
the set by $tB_1^n$ instead of $tB_1^n+\sqrt{t}B_2^n$.

\begin{lem} 
\label{wpychanieB1}
If $u\geq t >0$ then for any $i \in \{1,\ldots,n\}$ we have
\[
\big|\big(A+tB_1^n\big)\cap nB_1^n\cap \big\{x\colon |x_i|\geq u-t\big\}\big|\geq
e^{t/2}\big|A\cap nB_1^n\cap\big\{x\colon |x_i|\geq u\big\}\big|. 
\] 
\end{lem}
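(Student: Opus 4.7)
By relabeling assume $i=1$, and split the right-hand set into $A_\pm := A\cap nB_1^n\cap\{\pm x_1\geq u\}$ (disjoint, since $u>0$). I will construct, for each sign, an injective affine map sending $A_\pm$ into $(A+tB_1^n)\cap nB_1^n\cap\{\pm x_1\geq u-t\}$ with Jacobian determinant at least $e^{t/2}$. The hypothesis $u\geq t$ gives $u-t/2>0$, so the two images will lie in disjoint halfspaces, and summing the resulting inequalities yields the lemma.

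\textbf{The map.} I propose the homothety centered at the vertex $ne_1$ of $nB_1^n$,
\[
\phi_+(x) := ne_1+\lambda(x-ne_1), \qquad \lambda := 1+\frac{t}{2(n-u)}
\]
(the case $u=n$ is trivial since $|A_+|=0$). The key geometric observation is that for $x\in nB_1^n$ with $x_1\geq u$, writing $y=(x_2,\ldots,x_n)$, the constraint $\|x\|_1\leq n$ forces $\|y\|_1\leq n-x_1\leq n-u$, hence
\[
\|x-ne_1\|_1 \;=\; (n-x_1)+\|y\|_1 \;\leq\; 2(n-u).
\]
Three direct calculations then verify: (i) $\|\phi_+(x)-x\|_1=(\lambda-1)\|x-ne_1\|_1\leq (\lambda-1)\cdot 2(n-u)=t$, so $\phi_+(x)\in x+tB_1^n\subset A+tB_1^n$; (ii) $\|\phi_+(x)\|_1 = n+\lambda(\|x\|_1-n)\leq n$; (iii) $\phi_+(x)_1 = n-\lambda(n-x_1)\geq u-t/2\geq u-t$. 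Since $\phi_+$ has linear part $\lambda I$, the change-of-variables formula gives $|\phi_+(A_+)|=\lambda^n|A_+|$. The map $\phi_-$ centered at $-ne_1$ handles $A_-$ identically by symmetry.

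\textbf{Jacobian bound and main obstacle.} Using $\log(1+x)\geq x/(1+x)$ for $x\geq 0$,
\[
n\log\lambda \;\geq\; \frac{nt}{2(n-u)+t},
\]
and this is $\geq t/2$ iff $2u\geq t$, which is our hypothesis; hence $\lambda^n\geq e^{t/2}$. The principal difficulty is identifying the correct map in the first place: pure translation $x\mapsto x-te_1$ preserves volume and would gain nothing, and most other natural homotheties either leave $nB_1^n$ or exit $A+tB_1^n$. The vertex $ne_1$ is the right center because $nB_1^n$ is pinched there, so the slice $A_+$ sits in a cone of $\ell_1$-diameter at most $2(n-u)$; this is exactly what allows the dilation factor $\lambda-1=t/(2(n-u))$ without the Minkowski shift exceeding $t$, and the hypothesis $u\geq t$ is precisely what promotes the resulting Jacobian $\lambda^n$ to $e^{t/2}$.
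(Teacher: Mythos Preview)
Your proof is correct. The identity in (ii) tacitly uses $\phi_+(x)_1\ge 0$, which you only verify in (iii); since $\phi_+(x)_1\ge u-t/2>0$ this is fine, but the dependency is worth making explicit.

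The paper proves the same lemma by a genuinely different mechanism. It fixes the cone $B=\{x\in B_1^n: x_1\ge \sum_{j\ge 2}|x_j|\}$ (a half-size $\ell_1$-ball, $|B|=2^{-n}|B_1^n|$), observes that $A_1-tB\subset nB_1^n$ because subtracting $B$ pushes toward the origin, and then applies the Brunn--Minkowski inequality to $A_1-tB = s\cdot(A_1/s)+(1-s)\cdot(-tB/(1-s))$ with $s$ chosen so the two rescaled sets have equal volume. This yields $|A_1-tB|\ge s^{-n}|A_1|$, and an explicit bound on $s$ gives $s^{-n}\ge e^{t/2}$.

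Your homothety from the vertex $ne_1$ is more elementary: it avoids Brunn--Minkowski entirely and replaces it with the change-of-variables formula, which is arguably the cleaner route for this specific geometry. The paper's Brunn--Minkowski approach, on the other hand, is more robust to perturbations of the body; indeed the authors remark that with more work the same scheme handles $n^{1/p}B_p^n$ for $p\in[1,2]$, where no single affine map will do the job. Both arguments exploit the same geometric fact --- that the slice $\{x_1\ge u\}\cap nB_1^n$ sits in a cone of $\ell_1$-radius $n-u$ near the vertex --- but they cash it out differently.
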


\begin{proof}
Obviously we may assume that $i=1$ and $u\leq n$. Let 
$A_1:=A\cap nB_1^n\cap\{x\colon x_1\geq u\}$ and
$B:= \{x \in B_1^n : x_1 \geq \sum_{i \geq 2} |x_i|\}$. From the definition 
of $B$ and $A_1$ we have $A_1 - tB \subset n B_1^n$. On the other hand 
$B = \{x : |x_1 - 1/2| + \sum_{i \geq 2} |x_i| \leq 1/2\}$, 
so $|B| = 2^{-n} |B_1^n| = (2r_{1,n})^{-n}$.
Thus 
$$
|(A_1 + t B_1^n) \cap n B_1^n| \geq |(A_1 - t B) \cap n B_1^n| 
= |A_1 - t B|
$$. 

Now let us take 
$$ 
s:= \frac{2 |A_1|^{1\slash n}r_{1,n}}{t + 2 |A_1|^{1\slash n}r_{1,n}}. 
$$
Then we easily check that $|tB/(1-s)|=|A_1/s|$. Since 
$A_1\subset \{x\in nB_1^n\colon x_1\geq t\}$ we get
$|A_1|^{1/n}\leq (n-t)\slash r_{1,n}$ and $s\leq 2(n-t)/(2n-t)$.
Now we can use the Brunn-Minkowski inequality to get 
\begin{align*}
|A_1 - tB| =& 
\Big|s \frac{A_1}{s} + (1-s) \frac{-t}{1-s}B\Big| \geq 
\Big|\frac{A_1}{s}\Big|^{s} \Big|\frac{-t}{1-s}B\Big|^{1-s} = 
\Big|\frac{A_1}{s}\Big| =  s^{-n}|A_1| 
\\ 
\geq &\Big(\frac{2n-t}{2n-2t}\Big)^n|A_1| =
\Big(\frac{1}{1-\frac{t}{2n-t}}\Big)^n|A_1|\geq
e^{\frac{tn}{2n-t}} |A_1| \geq e^{t/2}|A_1|.
\end{align*}

Notice that $A_1+tB_1^n\subset \{x\colon x_1\geq u-t\}$, so we 
obtain 
$$
\big|\big(A+tB_1^n)\cap nB_1^n\cap \big\{x\colon x_1\geq u-t\big\}\big|\geq
e^{t/2}\big|A\cap nB_1^n\cap\big\{x\colon x_1\geq u\big\}\big|, 
$$ 
in the same way we show
$$
\big|\big(A+tB_1^n\big)\cap nB_1^n\cap \big\{x\colon x_1\leq -u+t\big\}\big|\geq
e^{t/2}\big|A\cap nB_1^n\cap\big\{x\colon x_1\leq -u\big\}\big|. 
$$ 
\end{proof}

\begin{remark} 
A similar result (although with a constant multiplicative factor) can be obtained using the same technique and more calculations for $n^{1/p}B_p^n$ 
instead of $nB_1^n$ for $p \in [1,2]$.
\end{remark}

\begin{lem} \label{wpychanieWykladniczy}
If $u\geq t >0$ then for any $i \in \{1,\ldots,n\}$ we have
$$
\nu^{n}\big(\big(A+tB_1^n\big)\cap \big\{x\colon |x_i|\geq u-t\big\}\big)\geq
e^{t/2}\nu^{n}\big(A\cap\big\{x\colon |x_i|\geq u\big\}\big).
$$ 
\end{lem}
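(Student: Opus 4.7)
The plan is to prove this lemma by a direct translation argument, which turns out to be simpler than the volumetric computation behind Lemma \ref{wpychanieB1}. By the symmetry of $\nu^n$ in the first coordinate it suffices to prove, separately, the estimates
\[
\nu^n((A+tB_1^n)\cap\{x_i\geq u-t\})\geq e^{t/2}\nu^n(A\cap\{x_i\geq u\})
\]
and the analogous one with $x_i\geq u$ replaced by $x_i\leq -u$ (and $u-t$ by $-(u-t)$); the combined bound in the statement of the lemma then follows by addition, since for $u>t$ these two half-spaces are disjoint, and for $u=t$ they overlap only on a null set.

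Without loss of generality I take $i=1$ and set $E:=A\cap\{x_1\geq u\}$. The idea is to use the translation $T(x):=x-te_1$ as the transport. Two observations are immediate: first, $-te_1\in tB_1^n$, so $T(E)\subseteq A+tB_1^n$; second, if $x\in E$ then $(T(x))_1=x_1-t\geq u-t$, so $T(E)\subseteq\{x_1\geq u-t\}$. Combining these gives $T(E)\subseteq(A+tB_1^n)\cap\{x_1\geq u-t\}$, and it therefore suffices to prove $\nu^n(T(E))\geq e^{t/2}\nu^n(E)$.

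This last step amounts to tracking how the density $2^{-n}e^{-\|\cdot\|_1}$ changes under the translation. The only calculation is to notice that, because $x_1\geq u\geq t$ for $x\in E$, the absolute value on the first coordinate is resolved without a sign flip, giving $\|x-te_1\|_1=(x_1-t)+\sum_{i\geq 2}|x_i|=\|x\|_1-t$. The substitution $y=x-te_1$ then yields
\[
\nu^n(T(E))=\int_E 2^{-n}e^{-\|x-te_1\|_1}\,dx=e^t\nu^n(E),
\]
which is in fact strictly stronger than the claimed bound. The same reasoning with $T(x):=x+te_1$ handles the $x_1\leq -u$ half. No real obstacle arises; the only care needed is in the half-space splitting and in verifying that the hypothesis $u\geq t$ is precisely what is required to avoid a sign flip in $\|x-te_1\|_1$, which is what makes the exponential density cooperate with the translation.
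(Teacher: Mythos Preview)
Your proof is correct, and in fact it is both simpler and sharper than the paper's argument: you obtain the factor $e^t$ rather than $e^{t/2}$.

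The paper takes a genuinely different route. It first proves a volumetric analogue (Lemma~\ref{wpychanieB1}) for the Lebesgue measure restricted to $nB_1^n$, using the Brunn--Minkowski inequality with a carefully chosen simplex $B\subset B_1^n$; this is where the loss from $e^t$ to $e^{t/2}$ occurs. It then recovers $\nu^n$ as a limit of marginals of the uniform measure on $(n+k)B_1^{n+k}$ as $k\to\infty$, transferring the inequality by projection. Your argument bypasses this detour entirely by exploiting the explicit product density of $\nu^n$: since the density is $2^{-n}e^{-\|x\|_1}$ and the hypothesis $x_1\geq u\geq t$ resolves the absolute value, the translation $x\mapsto x-te_1$ multiplies the density by exactly $e^t$ pointwise. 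With your approach Lemma~\ref{wpychanieB1} becomes unnecessary for the exponential result; the paper's route, on the other hand, is more geometric and (as noted in the remark following Lemma~\ref{wpychanieB1}) adapts to uniform measures on $n^{1/p}B_p^n$ for $p\in[1,2]$, where no such clean translation identity is available.
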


\begin{proof} Take an arbitrary $k \in \N$. Let $P: \R^{n+k} \ra \R^n$ 
be the projection onto first $n$ coordinates. Let $\rho_k$ be the uniform
probability measure on $(n+k)B_1^{n+k}$, and $\tilde{\nu}_k$ the measure defined by
$\tilde{\nu}_k(A) = \rho_k(P^{-1}(A))$. 
Take an arbitrary set $A \subset \R^n$. Notice that for any set 
$C \subset \R^n$ we have 
$$
C \cap \{x \colon |x_i| \geq s\} = 
P \big( P^{-1}(C) \cap \{x \colon |x_i| \geq s\}\big)
$$
and also
$P^{-1}(A)+B_1^{n+k} \subset P^{-1} (A+B_1^n)$. From Lemma 
\ref{wpychanieB1} we have 
$$
\rho_k\big(\big(P^{-1}(A) + tB_1^{n+k}\big) 
\cap \big\{x \colon |x_i| \geq u - t\big\}\big) 
\geq e^{t/2}\rho_k\big(P^{-1}(A) \cap \big\{x \colon |x_i| \geq u\big\}\big),
$$
and thus 
$$
\tilde{\nu}_k\big(\big(A + tB_1^n\big) 
\cap \big\{x \colon |x_i| \geq u - t\big\}\big) 
\geq e^{t/2} \tilde{\nu}_k\big(A \cap \big\{x \colon |x_i| \geq u\big\}\big).
$$

When $k \ra \infty$, we have $\tilde{\nu}_k(C) \ra \nu^n(C)$ for any set 
$C \in {\cal B}(R^n)$. Thus by going to the limit we get the assertion.
\end{proof}

\begin{prop} 
\label{CalDrugiejNormy}
For any $t > 0$ and any  $n \in \N$ we have 
$$
\int_{A + t B_1^n} |x|^2 d\nu^n(x) \geq e^{t/2} 
\int_A (|x|-t\sqrt{n})_+^2 d\nu^n(x).
$$
\end{prop}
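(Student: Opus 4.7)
The plan is to reduce the inequality to a coordinate-by-coordinate statement using Lemma \ref{wpychanieWykladniczy}, and then to close the gap between the $\ell_2$-norm $|x|$ and the individual coordinates $|x_i|$ via a simple triangle inequality.

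First, I would expand $|x|^2=\sum_{i=1}^n x_i^2$ and use the layer-cake formula $x_i^2=\int_0^\infty 2u\,\mathbf{1}_{\{|x_i|\geq u\}}\,du$. This gives
\begin{equation*}
\int_{A+tB_1^n}|x|^2\,d\nu^n(x)=\sum_{i=1}^n\int_0^\infty 2u\,\nu^n\bigl((A+tB_1^n)\cap\{|x_i|\geq u\}\bigr)\,du.
\end{equation*}
For each $u\geq 0$, Lemma \ref{wpychanieWykladniczy} applied with the exceedance level $u+t$ (so that the condition $u+t\geq t$ is automatic) yields
\begin{equation*}
\nu^n\bigl((A+tB_1^n)\cap\{|x_i|\geq u\}\bigr)\geq e^{t/2}\,\nu^n\bigl(A\cap\{|x_i|\geq u+t\}\bigr).
\end{equation*}
Plugging this into the integrand and changing variables $w=u+t$ turns the right side into $\int_t^\infty 2(w-t)\,\nu^n(A\cap\{|x_i|\geq w\})\,dw$, which, via the layer-cake formula in reverse, equals $\int_A (|x_i|-t)_+^2\,d\nu^n(x)$. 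Summing over $i$ gives the intermediate bound
\begin{equation*}
\int_{A+tB_1^n}|x|^2\,d\nu^n(x)\geq e^{t/2}\int_A\sum_{i=1}^n(|x_i|-t)_+^2\,d\nu^n(x).
\end{equation*}

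It then remains to verify the pointwise algebraic inequality
\begin{equation*}
\sum_{i=1}^n(|x_i|-t)_+^2\geq (|x|-t\sqrt n)_+^2.
\end{equation*}
If $|x|\leq t\sqrt n$ the right-hand side vanishes. Otherwise I would introduce the truncation $c\in\er^n$ with $c_i:=\mathrm{sgn}(x_i)\min(|x_i|,t)$, so that $|x_i-c_i|=(|x_i|-t)_+$ and $|c|\leq t\sqrt n$. The triangle inequality for $|\cdot|$ then gives $|x|\leq |x-c|+|c|\leq \bigl(\sum_i(|x_i|-t)_+^2\bigr)^{1/2}+t\sqrt n$, which rearranges to the desired inequality. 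Combining this with the integrated bound above completes the proof.

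The only slightly delicate point is the last algebraic step; the coordinate-wise bound loses the contribution of the coordinates $|x_i|<t$, and one has to check that it is still enough to dominate the centered euclidean norm, which the truncation argument handles cleanly.
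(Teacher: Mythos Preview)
Your proof is correct and follows the same structure as the paper's: layer-cake formula, Lemma \ref{wpychanieWykladniczy} coordinate by coordinate, summation, and then the pointwise inequality $\sum_i(|x_i|-t)_+^2\geq(|x|-t\sqrt{n})_+^2$. The only difference is in that last step: the paper observes that $f(y)=(\sqrt{y}-t)_+^2$ is convex on $[0,\infty)$ and applies Jensen's inequality to $\frac{1}{n}\sum_i x_i^2$, whereas you use the truncation $c_i=\mathrm{sgn}(x_i)\min(|x_i|,t)$ together with the triangle inequality for $|\cdot|$; both arguments are elementary and equally clean.
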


\begin{proof}
Let $A_t = A + t B_1^n$. By Lemma \ref{wpychanieWykladniczy} we get for 
any $s \geq 0$ and any $i$:
$$
\int_{A_t} I_{\{|x_i| \geq s\}} d\nu^n(x) 
\geq e^{t/2}  \int_A I_{\{|x_i| \geq s + t\}}d\nu^n(x).
$$
Thus
\begin{align*} 
\int_{A_t} x_i^2 d\nu^n(x) & 
= \int_{A_t} \int_0^\infty 2s I_{\{|x_i| \geq s\}} ds\ d\nu^n(x) 
= \int_0^\infty 2s \int_{A_t} I_{\{|x_i| \geq s\}} d\nu^n(x) ds 
\\ 
& \geq e^{t/2} \int_0^\infty 2s \int_A I_{\{|x_i| \geq s + t\}} d\nu^n(x) ds 
\\
&= e^{t/2} \int_A \int_0^\infty 2s I_{\{|x_i| \geq s + t\}} ds\ d\nu^n(x) 
 = e^{t/2} \int_A \big(|x_i| - t\big)_+^2 d\nu^n(x) .
\end{align*}
To get the assertion it is enough to take the sum over all $i$ and notice
that
the function $f(y):=(\sqrt{y}-t)_{+}^2$ is convex on $[0,\infty)$, hence
$$
\sum_{i=1}^{n}(|x_i| - t)_+^2=\sum_{i=1}^{n}f(x_i^2)\geq
nf\Big(\frac{1}{n}\sum_{i=1}^{n}x_i^2\Big)=(|x|-t\sqrt{n})_+^2
$$
\end{proof}

\begin{lem} \label{pojedynczePchniecie}
Suppose that 
$A\subset\{x\in \er^n\colon |x|\geq 5t\sqrt{n}\}$.
Then 
$$
\nu^n(A + t B_1^n) \geq \frac{1}{8}e^{t/2} \nu^n(A).
$$
\end{lem}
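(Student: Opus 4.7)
The plan is to combine Proposition~\ref{CalDrugiejNormy} with a radial shell decomposition of~$A$. Setting $B:=A+tB_1^n$, the hypothesis $|x|\ge 5t\sqrt{n}$ on~$A$ gives $|x|-t\sqrt{n}\ge \tfrac45|x|\ge 4t\sqrt{n}$, hence $(|x|-t\sqrt{n})^2\ge 16 t^2 n$ on~$A$, and Proposition~\ref{CalDrugiejNormy} yields
\[
\int_B |x|^2\, d\nu^n \;\ge\; 16 t^2 n\, e^{t/2}\nu^n(A).
\]
To convert this into a bound on the measure of~$B$, I will apply the same reasoning shell by shell: decompose $A=\bigsqcup_{k\ge 0}A_k$ with $A_k := A\cap\{M_k\le |x| < M_{k+1}\}$ and $M_k := 5 c^k t\sqrt{n}$ for a ratio $c>1$ to be chosen.

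For each $k$, since $A_k\subset\{|x|\ge M_k\}$, the same calculation gives
\[
\int_{A_k+tB_1^n}|x|^2\, d\nu^n \;\ge\; (5c^k-1)^2\, t^2 n\, e^{t/2}\nu^n(A_k),
\]
and the inclusion $A_k+tB_1^n\subset \{|x|\le M_{k+1}+t\}$ (using that $tB_1^n$ has Euclidean radius at most~$t$) bounds $|x|^2$ above on the enlarged shell by $(5c^{k+1}\sqrt n+1)^2 t^2$. Dividing,
\[
\nu^n(A_k+tB_1^n)\;\ge\; c_k\, e^{t/2}\nu^n(A_k), \qquad c_k \;=\; \frac{(5c^k-1)^2\, n}{(5c^{k+1}\sqrt n+1)^2}.
\]
A direct computation of $\partial_k\log c_k$ shows that $c_k$ is increasing in~$k$, so $\min_k c_k = c_0 = 16n/(5c\sqrt n+1)^2$.

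The final step is to choose $c$ so that the enlarged shells cover~$B$ with controlled overlap. I take $c:=1+2/(5\sqrt{n})$, for which $M_1(c-1) = 2t+4t/(5\sqrt n)>2t$; since $M_{k+1}(c-1)\ge M_1(c-1)$ for all $k\ge 0$, the enlarged shells $A_k+tB_1^n$ and $A_{k+2}+tB_1^n$ are strictly disjoint, so each point of~$B$ lies in at most two of them. Consequently
\[
\nu^n(B)\;\ge\; \tfrac12 \sum_{k\ge 0} \nu^n(A_k+tB_1^n)\;\ge\; \tfrac{c_0}{2}\, e^{t/2}\nu^n(A).
\]
With this $c$ one has $5c\sqrt n+1=5\sqrt n+3$, so $c_0/2 = 8n/(5\sqrt n+3)^2$, which equals exactly~$\tfrac18$ at $n=1$ and is strictly larger for $n\ge 2$. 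The most delicate point is verifying the monotonicity of~$c_k$ and the precise disjointness of non-adjacent enlarged shells at this critical choice of~$c$; both reduce to short direct computations, and the fact that the bound is saturated in the one-dimensional case $n=1$ explains why the numerical constant in the statement is exactly $\tfrac18$.
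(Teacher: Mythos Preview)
Your proof is correct and follows essentially the same strategy as the paper: apply Proposition~\ref{CalDrugiejNormy} shell by shell, bound $|x|^2$ above on each enlarged shell and $(|x|-t\sqrt n)_+^2$ below on each original shell, and use that non-adjacent enlarged shells are disjoint to lose only a factor of $2$ when summing. The only difference is that the paper uses \emph{arithmetic} shells $A_k=A\cap\{5t\sqrt n+2t(k-1)\le |x|<5t\sqrt n+2tk\}$, obtaining the per-shell ratio $\bigl(\tfrac{4\sqrt n+2(k-1)}{5\sqrt n+2k+1}\bigr)^2\ge \tfrac14$, while you use \emph{geometric} shells with ratio $c=1+2/(5\sqrt n)$; both choices yield the identical worst case $16n/(5\sqrt n+3)^2$ at the innermost shell, equal to $\tfrac14$ precisely when $n=1$, so the final constant $\tfrac18$ coincides.
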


\begin{proof}
Let 
$$
A_k := A \cap \{x \colon 5t \sqrt{n} + 2t (k-1) \leq |x| < 5t \sqrt{n} + 2t k\},
\quad k=1,2,\ldots.
$$
Then $A_k+tB_1^n \subset 
\{x\colon 5t\sqrt{n}+t(2k-3) \leq |x| < 5t\sqrt{n}+t(2k+1)\}$,
hence
$$
 \nu^n(A+tB_1^n) \geq \frac{1}{2} \sum_{k\geq 1} \nu(A_k+tB_1^n).
$$

From Proposition \ref{CalDrugiejNormy} applied for $A_k$ we have
\begin{align*}
\big(5t\sqrt{n}+t(2k&+1)\big)^2  \nu^n(A_k+tB_1^n)
\geq \int_{A_k + t B_1^n} |x|^2 d\nu^n(x) 
\\
 &\geq  e^{t/2} \int_{A_k} \big(|x|-t\sqrt{n})_+^2d\nu^n(x)
 \geq e^{t/2} \big(4t\sqrt{n}+2t(k-1)\big)^2\nu^n(A_k).
\end{align*}
Thus
$$
\nu^n(A_k + t B_1^n) \geq 
\Big(\frac{4t\sqrt{n}+2t(k-1)}{5t\sqrt{n}+t(2k+1)}\Big)^2e^{t/2}
\nu_n(A_k)\geq 
\frac{1}{4}e^{t/2}\nu^n(A_k).
$$
and
$$
\nu^n(A+tB_1^n) \geq 
\frac{1}{2} \sum_{k\geq 1}\frac{1}{4}e^{t/2}\nu^n(A_k)
=\frac{1}{8}e^{t/2}\nu^n(A).
$$
\end{proof}

\begin{thm} \label{PushAndPop} 
For any $A \in  \BB({\R^n})$ and any $t \geq 10$,
either
$$
\nu^n\big((A + t B_1^n) \cap 50 \sqrt{n} B_2^n\big) \geq  \frac{1}{2}\nu^n(A)
$$
or
\begin{equation}
\label{l1_enl}
\nu^n(A + t B_1^n) \geq e^{t \slash 10} \nu^n(A).
\end{equation}
In particular $(\ref{l1_enl})$ holds if
$A\cap(50\sqrt{n}B_2^n+tB_1^n)=\emptyset$.
\end{thm}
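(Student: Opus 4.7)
The plan is to prove the theorem in two stages: first establish the ``In particular'' clause (the technical core), then reduce the main dichotomy to it by a decomposition of $A$.

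For the ``In particular'' clause, assume $A \cap (50\sqrt{n}B_2^n + tB_1^n) = \emptyset$. I would iterate Lemma \ref{pojedynczePchniecie} using fixed steps of $10B_1^n$ (the maximal admissible size once one needs $5s\sqrt{n} \leq 50\sqrt{n}$). Choose $k$ to be the smallest positive integer with $(e^5/8)^k \geq e^{t/10}$, equivalently $k \geq t/[10(5 - \ln 8)]$; a direct check shows $10k \leq t$ for every $t \geq 10$ with this choice. The invariant $A + 10jB_1^n \subset \{|x| > 50\sqrt{n}\}$ for $0 \leq j \leq k$ follows by induction on $j$: if $x = x_0 + y'$ with $x_0 \in A$ and $y' \in 10jB_1^n$, then $-y' \in tB_1^n$ (since $10j \leq t$), and the hypothesis $x_0 \notin 50\sqrt{n}B_2^n + tB_1^n$ gives $x = x_0 - (-y') \notin 50\sqrt{n}B_2^n$. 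Hence each iteration satisfies the hypothesis $\{|x| \geq 50\sqrt{n}\} = \{|x| \geq 5 \cdot 10 \sqrt{n}\}$ of Lemma \ref{pojedynczePchniecie}, and telescoping $k$ pushes gives $\nu^n(A + 10kB_1^n) \geq (e^5/8)^k \nu^n(A) \geq e^{t/10}\nu^n(A)$; the monotonicity inclusion $A + 10kB_1^n \subset A + tB_1^n$ yields the desired bound (\ref{l1_enl}).

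For the main dichotomy, I would split $A = A_1 \cup A_2$ with $A_1 := A \cap 50\sqrt{n}B_2^n$ and $A_2 := A \setminus 50\sqrt{n}B_2^n$. If $\nu^n(A_1) \geq \nu^n(A)/2$, the first alternative is immediate from $(A + tB_1^n) \cap 50\sqrt{n}B_2^n \supset A_1$. Otherwise $A_2$ carries the majority of the mass; refining $A_2 = A_2' \cup A_2''$ with $A_2'' := A_2 \setminus (50\sqrt{n}B_2^n + tB_1^n)$ (the ``far'' part) and $A_2' := A_2 \cap (50\sqrt{n}B_2^n + tB_1^n)$ (the ``annular'' part), the first stage applied to $A_2''$ (with $k$ enlarged slightly to produce any desired constant factor $C$ above $e^{t/10}$, still compatible with $10k \leq t$ for $t \geq 10$) yields the second alternative whenever $\nu^n(A_2'')$ is a fixed positive fraction of $\nu^n(A)$.

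The main obstacle is the annular regime, where $\nu^n(A_2')$ dominates. Geometrically each $x \in A_2'$ decomposes as $x = z + y$ with $z \in 50\sqrt{n}B_2^n$ and $y \in tB_1^n$, so $z = x - y \in (A_2' + tB_1^n) \cap 50\sqrt{n}B_2^n$; the challenge is to upgrade this pointwise witness into a $\nu^n$-measure bound of the form $\nu^n((A_2' + tB_1^n) \cap 50\sqrt{n}B_2^n) \gtrsim \nu^n(A_2')$, which would then supply the first alternative. I would attempt this in one of two ways: either (i) a direct parametrization of the annular collar using the $\nu^n$-density comparison $|\log(\text{density at }x) - \log(\text{density at }z)| = |\|x\|_1 - \|z\|_1| \leq \|x-z\|_1 \leq t$, giving density ratios of at most $e^{\pm t}$ along $\ell_1$-translations of length $\leq t$; or (ii) by contradiction via Proposition \ref{CalDrugiejNormy}, bounding $\int_{A+tB_1^n}|x|^2\,d\nu^n$ from above by $(50\sqrt{n})^2 \cdot \tfrac{1}{2}\nu^n(A) + \int_{|x| > 50\sqrt{n}}|x|^2\,d\nu^n$ (using the assumed failure of the first alternative) and from below by $e^{t/2}\int_A (|x|-t\sqrt{n})_+^2\,d\nu^n$, aiming for a contradiction with the second-moment identity $\int|x|^2\,d\nu^n = 2n$. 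This annular regime is where I expect the bulk of the technical effort to lie.
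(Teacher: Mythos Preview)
Your proof of the ``in particular'' clause is correct and essentially matches the paper's mechanism for that part.

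The main dichotomy, however, has a genuine gap: your decomposition $A = A_1 \cup A_2 = A_1 \cup A_2' \cup A_2''$ forces you to confront the annular regime $A_2' = A \cap \big((50\sqrt{n}B_2^n + tB_1^n)\setminus 50\sqrt{n}B_2^n\big)$, and neither of your two proposed attacks closes it. The density comparison (i) only gives a ratio $e^{\pm t}$ along $\ell_1$-translates of length $\leq t$, which is far too weak to recover a constant fraction of the mass inside $50\sqrt{n}B_2^n$. The second-moment contradiction (ii) is too blunt: the tail term $\int_{|x|>50\sqrt{n}}|x|^2\,d\nu^n$ is of order $n$ times an exponentially small factor, but there is no lower bound on $\nu^n(A)$ to play against, so no contradiction emerges.

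The paper avoids the annular difficulty altogether by \emph{not} partitioning $A$. Instead it iterates on the enlarged sets $A_k := A + 10kB_1^n$ and tests, at every step $0\le k\le \lfloor t/10\rfloor$, whether $\nu^n(A_k\cap 50\sqrt{n}B_2^n)\ge \tfrac12\nu^n(A)$. If this ever happens, the first alternative holds (since $A_k\subset A+tB_1^n$). If it fails for every such $k$, then $A_k':=A_k\setminus 50\sqrt{n}B_2^n$ satisfies $\nu^n(A_k')>\nu^n(A_k)-\tfrac12\nu^n(A)\ge \tfrac12\nu^n(A_k)$ (using $A\subset A_k$), and Lemma~\ref{pojedynczePchniecie} applied to $A_k'$ with step $10$ gives
\[
\nu^n(A_{k+1})\ \ge\ \nu^n(A_k'+10B_1^n)\ \ge\ \tfrac{1}{8}e^{5}\,\nu^n(A_k')\ \ge\ \tfrac{1}{16}e^{5}\,\nu^n(A_k)\ \ge\ e^{2}\,\nu^n(A_k).
\]
Induction yields $\nu^n(A_{\lfloor t/10\rfloor})\ge e^{2\lfloor t/10\rfloor}\nu^n(A)\ge e^{t/10}\nu^n(A)$, the second alternative. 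The point is that the ``annular'' mass is never isolated: at each step one simply discards whatever has drifted into $50\sqrt{n}B_2^n$ (which is small by assumption) and pushes the rest. This is the idea you are missing; once you see it, the proof is short and the annular regime evaporates.
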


\begin{proof}
Let $A_k$ denote $A + 10k B_1^n$ for $k=0,1,\ldots$. If for any $0\leq k\leq t/10$ 
we have 
$\nu^n(A_k \cap 50 \sqrt{n} B_2^n) \geq \nu^n(A) \slash 2$, the thesis is proved. Thus we assume otherwise. Let $A_k' := A_k \setminus 50 \sqrt{n} B_2^n$. 
From Lemma \ref{pojedynczePchniecie} we have 
$$
\nu^n(A_{k+1}) \geq \nu^n(A_k' + 10B_1^n) \geq 
\frac{1}{8}e^{5}\nu^n(A_k') \geq \frac{1}{16}e^5\nu^n(A_k)
\geq e^{2}\nu^n(A_k).
$$
By a simple induction we get $\nu^n(A_k) \geq e^{2k} \nu^n(A)$
for any $k\leq t/10$. Thus we 
get 
$$
\nu^n(A + t B_1^n) \geq \nu^n\big(A_{\lfloor t/10\rfloor}\big) 
\geq e^{2\lfloor t/10\rfloor} \nu^n(A) \geq e^{t\slash 10} \nu(A).
$$
\end{proof}

\section{Uniform measure on $B_p^n$}

In this section we will prove the infimum convolution property $\IC(C)$ for $B_p^n$
balls. Recall that $\npn$ is a product measure, while $\mpn$ denotes the uniform 
measure on $\cpn B_p^n$.
We have 
$$
r_{p,n}^{-n}=|B_p^n| = \frac{2^n \Gamma(1 + 1\slash p)^n}{\Gamma(1 + n \slash p)} 
\sim \frac{(2\Gamma(1+1\slash p))^n (ep)^{n\slash p}}
{n^{n\slash p}(\sqrt{n\slash p} + 1)},
$$
where the last part follows from Stirling's formula. 
Thus $\cpn\sim n^{1/p}$. 

For $\npn$ we have $\IC(48)$ by Corollary \ref{SIClogprod}. Let us first
try to understand what sort of concentration this implies, that is, how does the 
function $\Lambda^\star$ behave for $\npn$.

\begin{prop} 
\label{ksztaltLambdaStarNu} 
For any $p \geq 1$ and $t\in\R$ we have 
$$
B_t(\np1) \sim \{x : f_p(|x|) \leq t\},
\mbox{ and } 
\Lambda^\star_\np1 (t/C) \leq f_p(|t|) \leq \Lambda^\star_{\np1}(Ct),
$$ 
where $f_p(t) = t^2$ for $t < 1$ and $f_p(t) = t^p$ for $t \geq 1$.
\end{prop}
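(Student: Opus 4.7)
By symmetry of $\nu_p$, $\Lambda^\star_{\nu_p}$ is even, so we may assume $t\geq 0$. Both claims will follow from the pointwise sandwich
\[
\Lambda^\star_{\nu_p}(y/C) \leq f_p(y) \leq \Lambda^\star_{\nu_p}(Cy),\qquad y\geq 0,\ p\geq 1,
\]
for an absolute constant $C$, since a sandwich of this form between two even non-decreasing functions directly yields the comparison of level sets $B_t(\nu_p) \sim \{f_p(|x|) \leq t\}$. My plan is to establish the sandwich by splitting at the kink $y=1$ of $f_p$.

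For the small-$y$ regime I first record that $\sigma_p^2 = \int x^2\,d\nu_p = \Gamma(3/p)/\Gamma(1/p)$ lies in $[1/3,2]$ for all $p\geq 1$. Rescaling by $\sigma_p$ gives an isotropic measure $\tilde\nu_p$, which is $1$-regular by Proposition \ref{LogToReg} (as $\nu_p$ is symmetric log-concave). Propositions \ref{sq_above} and \ref{sq_below} applied to $\tilde\nu_p$, together with the identity $\Lambda^\star_{\tilde\nu_p}(u) = \Lambda^\star_{\nu_p}(\sigma_p u)$, produce $cy^2 \leq \Lambda^\star_{\nu_p}(y) \leq C_1 y^2$ on $|y|\leq c_0$ with absolute constants; since $f_p(y) = y^2$ on $[0,1]$, both sides of the sandwich are verified there.

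For the large-$y$ regime I treat the two directions separately. Chebyshev's inequality gives $\Lambda^\star_{\nu_p}(y)\leq -\log\nu_p([y,\infty))$; substituting $u = t^p$ in $\int_y^\infty e^{-t^p}dt$ and using the elementary bound $(u+y^p)^{1/p-1}\geq (2y^p)^{1/p-1}$ on $[0,y^p]$ yields $\nu_p([y,\infty))\geq c p^{-1} y^{1-p} e^{-y^p}$, hence $\Lambda^\star_{\nu_p}(y)\leq y^p + (p-1)\log y + O(\log p)$ for $y\geq 1$. In the other direction, specialising $\Lambda^\star_{\nu_p}(Cy)\geq s\cdot Cy - \Lambda_{\nu_p}(s)$ to $s = p(Cy)^{p-1}$ and applying Laplace's method to $M_{\nu_p}(s) = (2\gamma_p)^{-1}\int e^{sx - |x|^p}dx$ (whose integrand peaks at $x^\star = Cy$ with quadratic decay controlled by $p(p-1)(Cy)^{p-2}$) gives $\Lambda_{\nu_p}(s) \leq (p-1)(Cy)^p + O(\log(pCy))$, so $\Lambda^\star_{\nu_p}(Cy) \geq (Cy)^p - O(\log) \geq y^p$ for $C$ absolute and large enough. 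In both directions the factor $C^p$ in $f_p(Cy) = C^p y^p$ absorbs all lower-order corrections.

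The principal obstacle lies in the intermediate regime $y$ just below or near $1$ with large $p$: as $\nu_p$ concentrates on $[-1,1]$, $\Lambda^\star_{\nu_p}(y)$ actually blows up like $\log p$ for $y$ slightly below $1$, while $f_p(y)$ stays of order $1$. The horizontal rescaling $y\mapsto Cy$ in the statement—rather than a pointwise multiplicative comparison—is precisely what enables the exponential factor $C^p$ in $f_p(Cy)$ to dominate these logarithmic-in-$p$ blow-ups; patching the intermediate range $|y|\in(c_0,1)$ cleanly requires combining the small-$y$ bound at $c_0$, convexity of $\Lambda^\star_{\nu_p}$, and the large-$y$ Laplace estimate.
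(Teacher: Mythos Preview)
Your treatment of the small-$y$ regime coincides with the paper's: both rescale to isotropic and invoke Propositions~\ref{sq_above} and~\ref{sq_below}. The divergence is in the large-$y$ regime. The paper does not attempt a direct analysis of $\Lambda^\star_{\nu_p}$ via tails and Laplace; instead it computes the $t$-th moments of $\nu_p$ explicitly as a ratio of Gamma functions, reads off $\mathcal{Z}_t(\nu_p)\sim[-t^{1/p},t^{1/p}]$ for $t\geq 1$, and then applies the already-proved Propositions~\ref{Z_sub_B} and~\ref{B_sub_Z} to conclude $B_t(\nu_p)\sim\mathcal{Z}_t(\nu_p)$. This is short and fully rigorous, and the uniformity in $p$ is automatic from Stirling. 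Your route is more hands-on; the upper bound via $\Lambda^\star_{\nu_p}(y)\leq -\log\nu_p([y,\infty))$ is correct and clean, but the Laplace step for the lower bound is only a sketch. Turning the heuristic ``$\Lambda_{\nu_p}(s)\leq (p-1)(Cy)^p+O(\log(pCy))$'' into a rigorous uniform upper bound on $M_{\nu_p}(s)$ is not entirely trivial: for $p$ close to $1$ the curvature $p(p-1)(Cy)^{p-2}$ at the peak degenerates, and a naive Gaussian envelope does not control the integral uniformly. One can salvage it, but the paper's moment route avoids the issue altogether.

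Your stated ``principal obstacle'' contains a misconception. The claim that $\Lambda^\star_{\nu_p}(y)$ blows up like $\log p$ for $y$ slightly below $1$ is false: your own tail bound gives, for $0<y<1$,
\[
\Lambda^\star_{\nu_p}(y)\;\leq\;-\log\nu_p([y,\infty))\;\leq\;-\log\frac{(1-y)e^{-1}}{2\gamma_p}\;\leq\;\log\frac{2e}{1-y},
\]
which is bounded independently of $p$ on any interval $[0,1-\delta]$. The only genuine patching needed between the two regimes is routine: once you know $\Lambda^\star_{\nu_p}(c_0)\geq c\,c_0^2$ from the small-$y$ estimate, convexity and $\Lambda^\star_{\nu_p}(0)=0$ give $\Lambda^\star_{\nu_p}(z)\geq (z/c_0)\,c\,c_0^2=cc_0\,z$ for all $z\geq c_0$, which already dominates $y^2$ on $y\leq 1$ after the rescaling $z=Cy$ with $C$ large. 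There is no $p$-dependent obstacle in that range.
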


\begin{proof}We shall use the facts proved in Section \ref{Conc_In_Section} 
to approximate $B_t(\np1)$. Note that $\np1$ is log-concave (as its density 
is log-concave) and symmetric. It is 1--regular from Proposition \ref{LogToReg}. 
Also 
$$
\sigma_p^2:=\int_\R x^2 d\nu_{p}(x) = \frac{1}{2\gamma_p}\int_\R x^2 e^{-|x|^p}dx 
=\frac{\Gamma(1+\frac{3}{p})}{3\Gamma(1 + \frac{1}{p})} \sim 1
$$ 
for $p \in [1,\infty)$. The measure $\tilde{\nu}_p$ with the density 
$\sigma_p d\nu_{p}({\sigma_px})$ is isotropic, hence Propositions 
\ref{sq_above} and \ref{sq_below} yield $B_t(\tilde{\nu}_p) \sim \sqrt{t} B_2^1 
= [-\sqrt{t},\sqrt{t}]$ for $t \leq 1$. 
Thus, as $B_{t}(\np1)=\sigma_p B_t(\tilde{\nu}_p)$, we get 
$B_t(\np1) \sim [-\sqrt{t},\sqrt{t}]$ for $t \leq 1$.

For $t \geq 1$ we have 
\begin{align*} 
\mathcal{M}_t(\np1) & 
= \Big\{u\in\R\colon \frac{1}{2\gamma_p}\int_\R |u|^t |x|^t e^{-|x|^p} dx
\leq 1\Big\}
\\ & 
= \Bigg\{u\in\R\colon |u|\leq\sqrt[t]{\frac{(t+1)\Gamma(1 + \frac{1}{p})}
{\Gamma(1 + \frac{t+1}{p})}}\Bigg\} 
\sim \{u\in\R\colon |u| \leq t^{-1\slash p}\}.
\end{align*}
Thus $Z_t(\np1) \sim [-t^{1\slash p},t^{1\slash p}]$ for $|t| \geq 1$, 
so by Propositions \ref{Z_sub_B} and \ref{B_sub_Z},  
$B_t(\np1) \sim [-t^{1\slash p}, t^{1\slash p}]$. Hence, for all $t \geq 0$ 
we have $\{x\colon f_p(|x|) \leq t\} \sim \{x : \Lambda^\star_\np1(x) \leq t\}$,
so $\Lambda^\star_\np1 (t/C) \leq f_p(t) \leq \Lambda^\star_{\np1}(Ct)$. 
As $\Lambda^\star_\np1$ is symmetric, the proof is finished.
\end{proof}

\begin{cor} 
\label{KsztaltBpNu}
For any $t > 0$ and $n \in \N$ we have 
$$
B_t(\npn) \sim
\left\{\begin{array}{ll} 
\sqrt{t} B_2^n + t^{1\slash p}B_p^n& \mbox {for } p\in [1,2]
\\
\sqrt{t}B_2^n \cap t^{1\slash p}B_p^n& \mbox {for } p\geq 2.
\end{array}\right.
$$
\end{cor}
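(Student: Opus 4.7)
The plan is to reduce the level set $B_t(\npn)$ to a coordinate-wise sum via the tensorization of the Legendre transform, and then to identify it with the claimed Minkowski sum or intersection of $\ell_p$- and $\ell_2$-balls by an elementary case analysis in terms of $p$.

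First, independence gives $\Lambda_{\npn}(v)=\sum_{i=1}^n \Lambda_{\np1}(v_i)$, hence $\Lambda^\star_{\npn}(x)=\sum_{i=1}^n \Lambda^\star_{\np1}(x_i)$. Combined with Proposition \ref{ksztaltLambdaStarNu}, this yields, up to universal constants,
\[
B_t(\npn) \sim \Big\{x\in \er^n\colon \sum_{i=1}^n f_p(|x_i|) \leq t\Big\},
\]
where I note that $f_p(s)= \max(s^2, s^p)$ for $p\geq 2$ and $f_p(s)= \min(s^2, s^p)$ for $p\in [1,2]$. So the task reduces to identifying this sublevel set in the two regimes.

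For $p\geq 2$ the argument is essentially algebraic: the two-sided inequality $\max(a,b)\leq a+b\leq 2\max(a,b)$ shows that $\sum_i \max(x_i^2,|x_i|^p)\leq t$ is equivalent, up to a factor of $2$, to the simultaneous conditions $|x|^2\leq t$ and $\|x\|_p^p\leq t$, which describes exactly $\sqrt{t}B_2^n\cap t^{1/p}B_p^n$.

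For $p\in[1,2]$ I would prove the two inclusions separately. The inclusion $B_t(\npn)\subset C(\sqrt{t}B_2^n+t^{1/p}B_p^n)$ comes from splitting the coordinates at scale $1$: with $y_i:=x_i\1_{\{|x_i|\leq 1\}}$ and $z_i:=x_i\1_{\{|x_i|>1\}}$ one has $|y|^2\leq \sum_i f_p(|x_i|)$ and $\|z\|_p^p\leq \sum_i f_p(|x_i|)$, so $x=y+z$ lies in the desired Minkowski sum. For the reverse inclusion, I would establish the submultiplicativity $f_p(2s)\leq 4 f_p(s)$, combine it with the monotonicity of $f_p$ to get $f_p(|a+b|)\leq f_p(2\max(|a|,|b|))\leq 4(f_p(|a|)+f_p(|b|))$, and apply this coordinatewise to $x=y+z$ with $|y|^2\leq t$, $\|z\|_p^p\leq t$, producing $\sum_i f_p(|x_i|)\leq 4(|y|^2+\|z\|_p^p)\leq 8t$. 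The main obstacle is this $p<2$ reverse inclusion, since $f_p$ is neither convex nor concave and is therefore not obviously compatible with Minkowski sums; the submultiplicativity estimate circumvents the lack of convexity, and its verification on the transition interval $[1/2,1]$ (the only nontrivial case, yielding the factor $2^p\cdot 2^{2-p}=4$) is the one small computation I would do carefully.
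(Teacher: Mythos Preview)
Your proof is correct and follows exactly the same route as the paper: reduce to the sublevel set $\{x:\sum_i f_p(|x_i|)\le t\}$ via the product structure and Proposition~\ref{ksztaltLambdaStarNu}, then identify this set with the appropriate combination of $\ell_2$- and $\ell_p$-balls. The paper merely writes ``simple calculations show that\ldots'' for this last step, and your coordinate-splitting for $p\le 2$ together with the submultiplicativity $f_p(2s)\le 4f_p(s)$ is precisely a clean way to carry out those calculations.
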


\begin{proof}
By Proposition \ref{ksztaltLambdaStarNu},
$$
B_t(\npn)=\{x\in \er^n\colon \sum \Lambda_{\np1}^{\star}(x_i)\leq t\}
\sim \{x\in \er^n\colon \sum f_{p}(|x_i|)\leq t\}.
$$
Simple calculations show that 
$\{x\in \er^n\colon \sum f_{p}(|x_i|)\leq t\}\sim t^{1/2}B_2^n+t^{1/p}B_p^n$
for $p\in [1,2]$ and
$\{x\in \er^n\colon \sum f_{p}(|x_i|)\leq t\}\sim t^{1/2}B_2^n\cap t^{1/p}B_p^n$
for $p\geq 2$.
\end{proof}

\begin{prop} 
\label{ksztaltLambdaStarMi} 
For any $t\in [0,n]$, $p \geq 1$ and $n \in \N$ we have $B_t(\mpn) \sim B_t(\npn)$. 
\end{prop}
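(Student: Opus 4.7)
The plan is to invoke the Schechtman--Zinn probabilistic representation to reduce the problem to comparing $L_t$--moments of linear functionals, and then to apply the general ${\cal Z}_t$--to--$B_t$ machinery developed earlier in this section. Recall that if $Y \sim \npn$ and $W$ is an independent standard exponential, setting $R := (\|Y\|_p^p + W)^{1/p}$, then the vector $\theta := Y/R$ is uniformly distributed on $B_p^n$ and is \emph{independent} of $R$, while $R^p \sim \mathrm{Gamma}(1+n/p,1)$. In particular $\cpn\theta$ has law $\mpn$, and the independence of $\theta$ and $R$ yields, for every $v\in\er^n$ and $t>0$,
\[
\int |\is{v}{x}|^t\, d\mpn(x) = \cpn^t\,\Ex|\is{v}{\theta}|^t = \frac{\cpn^t}{\Ex R^t}\int |\is{v}{y}|^t\, d\npn(y).
\]

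Now $\Ex R^t = \Gamma(1+(n+t)/p)/\Gamma(1+n/p)$, and an elementary Stirling computation (of the same flavour as the one used at the start of this section to obtain $\cpn\sim n^{1/p}$) shows $(\Ex R^t)^{1/t} \sim (n/p)^{1/p}$ uniformly for $t\in(0,n]$ and $p\ge 1$. Since $\cpn \sim n^{1/p} \sim (n/p)^{1/p}$ with absolute constants, the multiplicative factor $\cpn/(\Ex R^t)^{1/t}$ is bounded above and below by absolute positive constants, so ${\cal M}_t(\mpn) \sim {\cal M}_t(\npn)$ and hence ${\cal Z}_t(\mpn)\sim {\cal Z}_t(\npn)$ for all $t\in[1,n]$. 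Both $\mpn$ and $\npn$ are symmetric and log--concave, and therefore $1$--regular by Proposition \ref{LogToReg}; combining Propositions \ref{Z_sub_B} and \ref{B_sub_Z} one obtains $B_t(\mu)\sim {\cal Z}_t(\mu)$ for each of them whenever $t\ge 2$, so the moment comparison transfers to $B_t(\mpn)\sim B_t(\npn)$ throughout $t\in[2,n]$.

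The remaining range $t\in(0,2]$ I would deal with by a separate second--moment argument. Taking $t=2$ in the moment identity above yields $\int x_i^2\, d\mpn \sim 1 \sim \int y_i^2\, d\npn$, so both measures are isotropic up to an absolute constant factor. Propositions \ref{sq_above} and \ref{sq_below} then give $B_t(\mpn) \sim \sqrt{t}\,B_2^n \sim B_t(\npn)$ for $t\in(0,1]$, and Remark \ref{growth_Z} interpolates $t\in[1,2]$ back to the anchor point $t=2$. The main obstacle is the uniform Stirling estimate for $(\Ex R^t)^{1/t}$: the regime $n/p = O(1)$, where the standard asymptotics are not yet sharp, and the small--$t$ regime, where the $1/t$--power magnifies multiplicative errors, both require some care, though in neither case does one leave the realm of elementary estimates.
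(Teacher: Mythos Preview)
Your proposal is correct and follows essentially the same route as the paper's proof. The paper invokes Lemma~6 of \cite{BGMN} (which is precisely the Schechtman--Zinn computation you carry out) to obtain the moment comparison $\big(\int|\is{a}{x}|^t d\mpn\big)^{1/t}\sim \cpn\,(\max\{n,t\})^{-1/p}\big(\int|\is{a}{x}|^t d\npn\big)^{1/t}$, and for $t<1$ uses the same isotropicity argument via Propositions~\ref{sq_above} and~\ref{sq_below}; your version is simply more self-contained in re-deriving the \cite{BGMN} identity and more explicit about bridging the interval $t\in[1,2]$ via Remark~\ref{growth_Z}.
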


\begin{proof} 
For $t < 1$ we use Propositions \ref{sq_above} and \ref{sq_below}. Both $\mpn$ and
$\npn$ are symmetric, log--concave measures, and both can be rescaled as in Proposition
\ref{ksztaltLambdaStarNu} to be isotropic, thus $B_t(\mpn) \sim \sqrt{t} B_2^n \sim B_t(\npn)$.

Lemma 6 from \cite{BGMN} gives (after rescaling by $\cpn$),
\begin{equation}
\label{comp_mom}
\Big(\int |\is{a}{x}|^t d\mpn(x) \Big)^{1\slash t} \sim 
\frac{\cpn}{(\max\{n,t\})^{1\slash p}}\Big(\int|\is{a}{x}|^t d\npn(x)\Big)
^{1\slash t}
\end{equation}
for any $p,t\geq 1$ and $a\in \er^n$.
Note that as $\cpn \sim n^{1\slash p}$, this simply means 
the equivalence of $t$-th moments of $\mu_{p,n}$ and $\nu_{p,n}$ for $t\in [0,n]$. 
Thus $\mathcal{M}_t(\mu_{p,n}) \sim \mathcal{M}_t(\nu_{p,n})$ for $t \leq n$ 
and therefore $B_t(\mu_{p,n}) \sim B_t(\nu_{p,n})$.
\end{proof}

\begin{remark}
It is not hard to verify that $B_t(\mpn) \sim \cpn B_p^n$ for
$t\geq n$.
\end{remark}

\subsection{Transports of measure}
\label{Transporty}

We are now going to investigate two transports of measure. They will combine to 
transport a measure with known concentration properties ($\nu^n$ or $\nu_2^n$, 
that is the exponential or Gaussian measure) to the uniform measure $\mi_{p,n}$. 
We will investigate the contractive properties of these transports with respect to 
various norms. Our motivation is the following:

\begin{remark} 
\label{normComparisons} 
Let $U : \R^n \ra \R^n$ be a map such that
$$
\|U(x) - U(y)\|_p^p \geq \delta\|x-y\|_q^q
\mbox{ for all } x\in\R^n, y\in A.
$$ 
Then 
$$
U\big(A + t^{1\slash q} B_q^n\big) 
\supset U\big(\R^n\big) \cap \big(U(A) + \delta^{1\slash p} t^{1/p} B_p^n\big).
$$ 
Analogously if
$$
\|U(x) - U(y)\|_p^p \leq \delta\|x-y\|_q^q \mbox{ for all } x\in\R^n, y \in A
$$ 
then 
$$
U\big(A + t^{1\slash q}B_q^n\big) \subset 
U(A) + \delta^{1\slash p} t^{1\slash p}B_p^n.
$$
\end{remark}

\begin{proof}
Let us prove the first statement, the second proof is almost identical. 
Suppose $U(x) \in U(A) + \delta^{1\slash p} t^{1\slash p} B_p^n$. Then there 
exists 
$y \in A$ such that $\|U(x) - U(y)\|_p^p \leq \delta t.$ 
From the assumption we have $t \geq \|x - y \|_q^q$, which means 
$x \in A + t^{1\slash q} B_q^n$, and $U(x) \in U(A + t^{1\slash q} B_q^n)$.
\end{proof}

The first transport we introduce is the radial transport $T_{p,n}$ which 
transforms the product measure $\nu_p^n$  onto $\mi_{p,n}$ -- 
the uniform measure on $\cpn B_p^n$. 
We will show this transport is Lipschitz with respect to the $\ell_p$ norm 
and Lipschitz on a large set with respect to the $\ell_2$ norm for $p \leq 2$.

\begin{Def}
For $p\in [1,\infty)$ and $n\in \N$
let $f_{p,n} : [0,\infty) \ra [0,\infty)$ be given by the equation 
\begin{equation}
\label{fnp_1}
\int_0^s e^{-r^p} r^{n-1} dr = (2\gamma_p)^n \int_0^{f_{p,n}(s)} r^{n-1} dr
\end{equation}
and $T_{p,n}(x):= x f_{p,n}(\|x\|_p) \slash \|x\|_p$ for $x\in \er^n$.
\end{Def}

Let us first show the following simple estimate.

\begin{lem}
\label{temp_gamma}
For any $q>0$ and $0\leq u\leq q/2$,
\[
q\int_{0}^{u}e^{-t}t^{q-1}dt\leq e^{-u}u^{q}\Big(1+2\frac{u}{q}\Big).
\] 
\end{lem}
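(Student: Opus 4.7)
The plan is to recast the inequality via integration by parts so that the $u^q$ term on the right cancels, reducing the bound to a simpler first-order estimate, and then to prove the reduced inequality by the standard trick of showing that the difference vanishes at $u=0$ and has non-negative derivative on $[0,q/2]$.

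First I would integrate by parts on the left-hand side with $w=e^{-t}$ and $dv = q t^{q-1} dt$, so that $v = t^q$ and $dw = -e^{-t} dt$. This yields
\[
q\int_0^u e^{-t} t^{q-1} dt = e^{-u} u^q + \int_0^u e^{-t} t^q dt.
\]
The summand $e^{-u} u^q$ matches the first summand of $e^{-u} u^q(1+2u/q)$ on the right of the claim, so the whole statement reduces to showing
\[
\int_0^u e^{-t} t^q dt \leq \frac{2}{q} e^{-u} u^{q+1} \qquad (0 \leq u \leq q/2).
\]

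Then I would set $h(u) := \frac{2}{q} e^{-u} u^{q+1} - \int_0^u e^{-t} t^q dt$. Clearly $h(0)=0$. A direct computation gives
\[
h'(u) = \frac{2}{q} e^{-u} u^q (q+1-u) - e^{-u} u^q = \frac{e^{-u} u^q\,(q+2-2u)}{q}.
\]
For $u \in [0, q/2]$ we have $q + 2 - 2u \geq 2 > 0$, so $h'\geq 0$ on this interval, and therefore $h(u) \geq h(0) = 0$, which is precisely the reduced inequality.

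There is no real obstacle here: the only mildly subtle point is recognising that the seemingly peculiar factor $1 + 2u/q$ is a fingerprint of a single integration by parts, and once that step is undone the remaining estimate is transparent by a one-line monotonicity argument. In fact the same proof yields validity on the slightly larger interval $u\leq (q+2)/2$, but the weaker range $u\leq q/2$ is what will be used in the sequel.
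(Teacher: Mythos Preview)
Your proof is correct and is essentially the same as the paper's. The paper sets $f(u):=e^{-u}u^{q}(1+2u/q)-q\int_{0}^{u}e^{-t}t^{q-1}dt$, notes $f(0)=0$, and computes $f'(u)=e^{-u}u^q(1-2u/q+2/q)\geq 0$ on $[0,q/2]$; your integration by parts is just a rewriting showing that $f=h$, and your derivative computation of $h'$ matches $f'$ exactly.
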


\begin{proof}
Let 
$$
f(u):=e^{-u}u^{q}\Big(1+2\frac{u}{q}\Big)-q\int_{0}^{u}e^{-t}t^{q-1}dt.
$$
Then $f(0)=0$ and $f'(u)=e^{-u}u^q(1-2u/q+2/q)\geq 0$ for $0\leq u\leq q/2$. 
\end{proof}

Now we are ready to state the basic properties of $T_{p,n}$.

\begin{prop} 
\label{transportBpProperties} 
i) The map $T_{p,n}$  
 transports the probability measure $\nu_p^n$ onto the measure $\mi_{p,n}$.
\\
ii) For all $t > 0$ we have $e^{-t^p \slash n} t \leq 2\gamma_p f_{p,n}(t) \leq t$ 
and $f_{p,n}'(t) \leq (2\gamma_p)^{-1}\leq 1$.
\\
iii) For any $t>0$, $0\leq f_{p,n}(t)/t-f_{p,n}'(t)\leq \min\{1,2pt^{p}/n\}$.
\\
iv) The function $t \mapsto f_{p,n}(t) \slash t$ is decreasing on $(0,\infty)$ 
and for any $s,t > 0$, 
$$
|t^{-1} f_{p,n}(t) - s^{-1} f_{p,n}(s)| \leq (st)^{-1} |s-t| f_{p,n}(s\wedge t) 
\leq \frac{|s-t|}{\max\{s,t\}}.
$$
\end{prop}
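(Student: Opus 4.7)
The four parts all follow from unwinding the defining integral equation (\ref{fnp_1}); only (iii) requires a nontrivial trick, and that is where I expect the main obstacle.

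For (i), the plan is to work in $\ell_p$ polar coordinates. The density of $\npn$ splits as a cone measure on the $\ell_p$-unit sphere times the radial density proportional to $e^{-r^p}r^{n-1}\,dr$, and $T_{p,n}$ is radial in the $\ell_p$ sense (it fixes direction and rescales the $\ell_p$-radius $r$ to $f_{p,n}(r)$). Hence it suffices to check, for every $s>0$, that the $\npn$-mass of $\{\|x\|_p\leq s\}$ equals the $\mpn$-mass of $\{\|x\|_p\leq f_{p,n}(s)\}$, which after multiplying through by $|\cpn B_p^n|=1$ is precisely (\ref{fnp_1}).

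For (ii), I would differentiate (\ref{fnp_1}) to obtain the key identity $(2\gamma_p)^n f_{p,n}(s)^{n-1}f_{p,n}'(s)=e^{-s^p}s^{n-1}$. The two-sided bound on $2\gamma_p f_{p,n}(t)$ follows by sandwiching $\int_0^s e^{-r^p}r^{n-1}\,dr$ between $e^{-s^p}s^n/n$ and $s^n/n$. Plugging the resulting lower bound on $f_{p,n}(s)$ into the derivative identity cancels $s^{n-1}$ and leaves $f_{p,n}'(s)\leq e^{-s^p/n}/(2\gamma_p)\leq 1/(2\gamma_p)$; the final $\leq 1$ uses $\gamma_p=\Gamma(1+1/p)>1/2$, since the global minimum of $\Gamma$ on $(0,\infty)$ exceeds $1/2$.

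The heart of the proof is (iii). Integration by parts on (\ref{fnp_1}) yields $(2\gamma_p)^n f_{p,n}(s)^n = e^{-s^p}s^n + p\int_0^s e^{-r^p}r^{n+p-1}\,dr$. Combining this with the derivative identity from (ii) and clearing denominators, I can write
$$\frac{f_{p,n}(s)}{s}-f_{p,n}'(s)=\frac{p\int_0^s e^{-r^p}r^{n+p-1}\,dr}{s(2\gamma_p)^n f_{p,n}(s)^{n-1}},$$
which is manifestly nonnegative. Bounding $r^p\leq s^p$ in the numerator and using (\ref{fnp_1}) once more collapses the expression to $f_{p,n}(s)/s-f_{p,n}'(s)\leq p s^{p-1}f_{p,n}(s)/n\leq 2ps^p/n$, using $f_{p,n}(s)\leq s$ from (ii). The competing bound $\leq 1$ is immediate from $f_{p,n}'(s)\geq 0$ and $f_{p,n}(s)/s\leq 1$.

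Part (iv) is then a corollary. The nonnegativity in (iii) is exactly $(f_{p,n}(s)/s)'\leq 0$, so $t\mapsto f_{p,n}(t)/t$ is nonincreasing. For the Lipschitz estimate, assuming WLOG $s<t$, I would write
$$\frac{f_{p,n}(s)}{s}-\frac{f_{p,n}(t)}{t}=\frac{tf_{p,n}(s)-sf_{p,n}(t)}{st}\leq\frac{(t-s)f_{p,n}(s)}{st},$$
where the inequality follows by replacing $f_{p,n}(t)$ with $f_{p,n}(s)$ (valid since $f_{p,n}$ is increasing). The second inequality in (iv) then follows from $f_{p,n}(s\wedge t)\leq s\wedge t\leq \max(s,t)$, which is (ii) again.
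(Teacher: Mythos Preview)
Your proposal is correct, and parts (i), (ii) and (iv) match the paper's argument essentially line for line.

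Part (iii) is where you genuinely diverge, and your route is cleaner. The paper does not integrate by parts; instead it first proves nonnegativity by rewriting $tf_{p,n}'(t)/f_{p,n}(t)=(t/f_{p,n}(t))^n e^{-t^p}(2\gamma_p)^{-n}$ and bounding this by $1$ via the lower bound on $f_{p,n}$ from (ii). For the upper bound $2pt^p/n$, the paper invokes a separate preparatory lemma (an incomplete-gamma estimate: $q\int_0^u e^{-r}r^{q-1}\,dr\leq e^{-u}u^q(1+2u/q)$ for $0\leq u\leq q/2$), applies it to $(2\gamma_p)^n f_{p,n}(t)^n=\frac{n}{p}\int_0^{t^p}e^{-u}u^{n/p-1}\,du$, and then deduces
\[
\frac{f_{p,n}(t)}{t}-f_{p,n}'(t)=\frac{f_{p,n}(t)}{t}\Big(1-\frac{e^{-t^p}t^n}{(2\gamma_p)^n f_{p,n}^n(t)}\Big)\leq 1-\Big(1+\frac{2pt^p}{n}\Big)^{-1}\leq \frac{2pt^p}{n}.
\]
This requires the side assumption $2pt^p/n\leq 1$ (forced by the lemma's hypothesis $u\leq q/2$), which is why the paper first disposes of the complementary range using the trivial bound $\leq 1$. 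Your integration-by-parts identity
\[
\frac{f_{p,n}(s)}{s}-f_{p,n}'(s)=\frac{p\int_0^s e^{-r^p}r^{n+p-1}\,dr}{s(2\gamma_p)^n f_{p,n}(s)^{n-1}}
\]
gives nonnegativity for free, needs no auxiliary lemma, works for all $t$ simultaneously, and in fact yields the sharper bound $ps^{p-1}f_{p,n}(s)/n\leq ps^p/n$ rather than $2ps^p/n$. So your approach to (iii) is a strict simplification of the paper's.
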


\begin{proof}
The definition of $T_{p,n}$ directly implies i). 
Differentiation of (\ref{fnp_1}) gives
\begin{equation}
\label{fpn_2}
e^{-s^p} s^{n-1} = (2\gamma_p)^n f_{p,n}^{n-1}(s) f_{p,n}'(s).
\end{equation}

By (\ref{fnp_1}),
$$
e^{-t^p} t^n %%= e^{-t^p} n \int_0^t r^{n-1}dr 
\leq n \int_0^t e^{-r^p} r^{n-1} dr = (2\gamma_p)^n f_{p,n}^n(t) 
\leq n \int_0^t r^{n-1} dr = t^n,
$$ 
which, when the $n$-th root is taken, give the first part of ii).

For the second part of ii) we use (\ref{fpn_2}) and the
estimate above to get
\begin{align*}
f_{p,n}'(s) &= e^{-s^p} (2\gamma_p)^{-n}\Big(\frac{s}{f_{p,n}(s)}\Big)^{n-1}  
\leq 
e^{-s^p} (2\gamma_p)^{-n} \big(e^{s^p\slash n} 2\gamma_p\big)^{n-1} 
\\
&= e^{-s^p \slash n} (2\gamma_p)^{-1} \leq (2\gamma_p)^{-1}\leq 1.
\end{align*}

To show iii) first notice that by (\ref{fpn_2}) and ii),
$$
\frac{tf_{p,n}'(t)}{f_{p,n}(t)} = \Big(\frac{t}{f_{p,n}(t)}\Big)^n e^{-t^p}
(2\gamma_p)^{-n} 
\leq \Big(e^{t^p \slash n} 2\gamma_p\Big)^ne^{-t^p} (2\gamma_p)^{-n} = 1,
$$ 
thus $f_{p,n}(t)/t-f_{p,n}'(t)\geq 0$. Moreover by ii),
$f_{p,n}(t)/t-f_{p,n}'(t)\leq f_{p,n}(t)/t\leq 1$, so we may assume that
$2pt^p/n\leq 1$. By (\ref{fnp_1}) and Lemma \ref{temp_gamma} we obtain
$$
(2\gamma_p)^nf_{p,n}^n(t)=\frac{n}{p}\int_{0}^{t^p}e^{-u}u^{n/p-1}du
\leq e^{-t^p}t^n\Big(1+2\frac{pt^p}{n}\Big).
$$
Thus using again (\ref{fpn_2}) and part ii) we get
$$
\frac{f_{p,n}(t)}{t}-f_{p,n}'(t)=\frac{f_{p,n}(t)}{t}
\bigg(1-\frac{e^{-t^p}t^n}{(2\gamma_p)^nf_{p,n}^n(t)}\bigg)
\leq 
1-\Big(1+2\frac{pt^p}{n}\Big)^{-1}\leq
\frac{2pt^p}{n}.
$$

By iii) we get $(f_{p,n}(t) \slash t)' \leq 0$, 
which proves the first part of iv). For the second part 
suppose that $s > t > 0$, then
$$ 
0\leq \frac{f_{p,n}(t)}{t} - \frac{f_{p,n}(s)}{s} \leq 
\frac{f_{p,n}(t)}{t} - \frac{f_{p,n}(t)}{s} = 
\frac{s-t}{st} f_{p,n}(t) \leq \frac{s-t}{s}.
$$
\end{proof}

The next Proposition may be also deduced (with different constant) from
the more general fact proved in \cite{MS}.

\begin{prop} 
\label{kontrakcjaLp} 
For any $x,y\in \er^n$ we have 
$\|T_{p,n}x - T_{p,n}y\|_p \leq 2\|x-y\|_p$.
\end{prop}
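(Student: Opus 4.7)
The plan is to write $T_{p,n}(x) = g(\|x\|_p)\,x$ where $g(t) := f_{p,n}(t)/t$ (extended by $g(0) = f_{p,n}'(0)$, which is harmless since the statement is trivial when $x=0$ or $y=0$), and exploit the two regularity estimates on $f_{p,n}$ collected in Proposition~\ref{transportBpProperties}. Letting $r := \|x\|_p$ and $s := \|y\|_p$, the standard decomposition
\[
T_{p,n}(x) - T_{p,n}(y) = g(r)\,(x-y) + \bigl(g(r) - g(s)\bigr)\,y
\]
followed by the triangle inequality in $\ell_p^n$ gives
\[
\|T_{p,n}(x) - T_{p,n}(y)\|_p \leq g(r)\,\|x-y\|_p + |g(r)-g(s)|\,\|y\|_p.
\]

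The two terms are handled separately. For the first, Proposition~\ref{transportBpProperties}(ii) yields $g(t) = f_{p,n}(t)/t \leq 1/(2\gamma_p) \leq 1$, so $g(r)\|x-y\|_p \leq \|x-y\|_p$. For the second, assume without loss of generality that $r \geq s > 0$. Property (iv) gives the key Lipschitz-type bound $|g(r)-g(s)| \leq (r-s)/r$, so
\[
|g(r)-g(s)|\,\|y\|_p \leq \frac{r-s}{r}\,s \leq r-s.
\]
Finally, the triangle inequality for the $\ell_p$-norm, $|r - s| = \bigl|\|x\|_p - \|y\|_p\bigr| \leq \|x-y\|_p$, bounds this by $\|x-y\|_p$. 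Summing gives $\|T_{p,n}(x) - T_{p,n}(y)\|_p \leq 2\,\|x-y\|_p$.

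I do not expect any real obstacle: once $T_{p,n}$ is written in the radial form $g(\|\cdot\|_p)\,(\cdot)$, the argument reduces to the standard split into a \emph{radial} part and an \emph{angular} part, each of which is immediately controlled by one of the analytic bounds on $f_{p,n}$ that Proposition~\ref{transportBpProperties} has already supplied. The only mild point worth checking is that part~(ii) indeed forces $g \leq 1$ (equivalently $2\gamma_p \geq 1$), which is built into the statement of (ii) via the inequality $f_{p,n}'(t) \leq (2\gamma_p)^{-1} \leq 1$.
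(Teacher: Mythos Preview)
Your proof is correct and follows essentially the same approach as the paper: write $T_{p,n}$ in radial form, split $T_{p,n}x-T_{p,n}y$ into a ``difference'' term and a ``radial variation'' term, and bound each using parts (ii) and (iv) of Proposition~\ref{transportBpProperties}. The only cosmetic difference is which of the two equivalent add-and-subtract decompositions is used (the paper factors out $g(\|y\|_p)$ and multiplies the remainder by $x$, you factor out $g(\|x\|_p)$ and multiply the remainder by $y$), but both routes land on $\|x-y\|_p + \bigl|\|x\|_p-\|y\|_p\bigr| \leq 2\|x-y\|_p$.
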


\begin{proof} 
Assume $s:=\|x\|_p\geq t:=\|y\|_p$, we apply
Proposition \ref{transportBpProperties} and get
\begin{align*}
\|T_{p,n}x - T_{p,n}y\|_p 
& = \Big(\sum_i \big|(T_{p,n}x)_i - (T_{p,n}y)_i\big|^p\Big)^{1/p}
\\
&= \Big(\sum_i \Big| \frac{f_{p,n}(t)}{t} (x_i - y_i) + 
\Big(\frac{f_{p,n}(s)}{s} - \frac{f_{p,n}(t)}{t}\Big) x_i\Big|^p\Big)^{1/p}
\\ 
& \leq 
\Big(\sum_i \Big(|x_i - y_i| + \frac{|s-t|}{s}|x_i|\Big)^p\Big)^{1/p} 
\\ 
& \leq 
 \Big(\sum_i |x_i - y_i|^p\Big)^{1/p} +
\frac{|s-t|}{s}\Big(\sum_i |x_i|^p\Big)^{1/p} 
\\ 
& = \|x - y\|_p + 
\frac{\big|\|x\|_p - \|y\|_p\big|}{\|x\|_p} \|x\|_p\leq
%%&  \Big(\|x-y\|_p + \|x-y\|_p\Big) =
 2 \|x-y\|_p.
\end{align*}
\end{proof}

\begin{prop} 
\label{kontrakcjaL2} 
Let $u\geq 0$, $p\in [1,2]$ and 
$x\in \R^n$ be such that $\|x\|_2 n^{-1\slash 2} \leq u \|x\|_p n^{-1\slash p}$,
then 
$$
\|T_{p,n}x - T_{p,n}y\|_p \leq (1 + u) \|x-y\|_p
\mbox{ for all } y\in \er^n.
$$
\end{prop}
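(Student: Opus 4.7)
The plan is to adapt the proof of Proposition \ref{kontrakcjaLp} in a way that exposes the parameter $u$. Take $s := \|x\|_p \geq t := \|y\|_p$ without loss of generality. Applying the bounds $f_{p,n}(t)/t \leq 1$ and $|f_{p,n}(s)/s - f_{p,n}(t)/t| \leq (s-t)/s$ from Proposition \ref{transportBpProperties} in each coordinate, followed by Minkowski's inequality, yields exactly the intermediate estimate appearing in the proof of Proposition \ref{kontrakcjaLp}:
$$\|T_{p,n}x - T_{p,n}y\|_p \leq \|x-y\|_p + \frac{s-t}{s}\|x\|_p = \|x-y\|_p + (s-t).$$

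The key new observation is that, for $p\in[1,2]$, the hypothesis on $x$ actually forces $u\geq 1$. Since $2/p\geq 1$, the function $a\mapsto a^{2/p}$ is convex on $[0,\infty)$, so Jensen's inequality applied to $a_i:=|x_i|^p$ gives
$$\frac{\|x\|_2^2}{n} = \frac{1}{n}\sum_{i=1}^n\big(|x_i|^p\big)^{2/p} \geq \Big(\frac{1}{n}\sum_{i=1}^n|x_i|^p\Big)^{2/p} = \frac{\|x\|_p^2}{n^{2/p}},$$
that is, $\|x\|_2 n^{-1/2} \geq \|x\|_p n^{-1/p}$. Comparing this with the standing assumption $\|x\|_2 n^{-1/2} \leq u\|x\|_p n^{-1/p}$ immediately gives $u\geq 1$.

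To finish, invoke the reverse triangle inequality in $\ell_p$ and the bound $u\geq 1$ just derived: $s-t \leq \|x-y\|_p \leq u\|x-y\|_p$, hence
$$\|T_{p,n}x - T_{p,n}y\|_p \leq \|x-y\|_p + (s-t) \leq \|x-y\|_p + u\|x-y\|_p = (1+u)\|x-y\|_p.$$

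The main non-routine step is the Jensen calculation pinning down $u\geq 1$; everything else is transliterated from the proof of Proposition \ref{kontrakcjaLp}, and no new analytic input beyond Proposition \ref{transportBpProperties} is required. The role of Proposition \ref{kontrakcjaL2} is thus to recast the $2$-Lipschitz bound of Proposition \ref{kontrakcjaLp} in a form parameterized by the $\ell_2$-vs-$\ell_p$ concentration ratio of $x$, which will be convenient in the sequel.
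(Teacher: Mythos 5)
The conclusion you proved, with $\|\cdot\|_p$ norms, is a typo in the statement: the paper's own proof, and the subsequent use in Theorem \ref{glownyWynikMaleP} (where the proposition is invoked to produce $T_{p,n}(A'' + 4Ct^{1/2}B_2^n) \subset T_{p,n}(A'') + \tilde C t^{1/2} B_2^n$), both concern the $\ell_2$ norm. The intended statement is $\|T_{p,n}x - T_{p,n}y\|_2 \le (1+u)\|x-y\|_2$. Your observation that the hypothesis forces $u \ge 1$ (via the power-mean inequality $n^{-1/2}\|x\|_2 \ge n^{-1/p}\|x\|_p$ for $p \le 2$) is correct, but it should have been a warning sign: with $u \ge 1$, the bound $(1+u)\|x-y\|_p$ is never sharper than the $2\|x-y\|_p$ already provided by Proposition \ref{kontrakcjaLp}, so the $\ell_p$ version of the proposition would be vacuous. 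The $\ell_2$ version is what actually uses the hypothesis on $x$: $T_{p,n}$ is not globally $\ell_2$-Lipschitz, only on the cone where $\|x\|_2 n^{-1/2}$ is comparable to $\|x\|_p n^{-1/p}$, and that is precisely what this proposition records.

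The paper's argument begins the same way as yours --- the coordinate-wise bound $|(T_{p,n}x)_i - (T_{p,n}y)_i| \le |x_i - y_i| + \frac{|s-t|}{s}|x_i|$ from Proposition \ref{transportBpProperties} --- but then applies Minkowski in $\ell_2$ rather than $\ell_p$, giving $\|T_{p,n}x-T_{p,n}y\|_2 \le \|x-y\|_2 + \frac{|s-t|}{s}\|x\|_2$. After $|s-t| \le \|x-y\|_p$ (reverse triangle inequality in $\ell_p$), one is left with the hybrid quantity $\frac{\|x-y\|_p}{\|x\|_p}\|x\|_2$, and the two H\"older steps $\|x-y\|_p \le n^{1/p-1/2}\|x-y\|_2$ and $\|x\|_2 n^{-1/2} \le u\|x\|_p n^{-1/p}$ convert it into $u\|x-y\|_2$. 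This cross-norm accounting is where $u$ does its work, and your proof bypasses it entirely by staying inside $\ell_p$. (A minor additional point: you cannot take $s \ge t$ without loss of generality, since the hypothesis is asymmetric in $x,y$; but this is harmless, as Proposition \ref{transportBpProperties} iv) gives $|f_{p,n}(s)/s - f_{p,n}(t)/t| \le |s-t|/\max(s,t) \le |s-t|/s$ regardless of which of $s,t$ is larger.)
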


\begin{proof} 
Let $s=\|x\|_p$ and $t=\|y\|_p$, we use Proposition \ref{transportBpProperties}
as in the proof of Proposition \ref{kontrakcjaLp},
and the H\"older inequality,
\begin{align*}
\|T_{p,n}x - T_{p,n}y\|_2 
%%= \Big(\sum_i |(T_{p,n}x)_i - (T_{p,n}y)_i|^2\Big)^{1/2}
&\leq   \Big(\sum_i\Big| |x_i - y_i|+|x_i| \frac{|s-t|}{s}\Big|^2
\Big)^{1/2} 
\\ 
& 
\leq  \|x - y\|_2 + \frac{|s-t|}{s} \|x\|_2 
\leq \|x-y\|_2 + \frac{\|x-y\|_p}{\|x\|_p} \|x\|_2
\\
& \leq  \|x-y\|_2 + 
\frac{\|x\|_2}{\|x\|_p} n^{\frac{1}{p} - \frac{1}{2}} \|x-y\|_2 
\leq (1+u) \|x-y\|_2.
\end{align*}
\end{proof}

The second transport we will use is a simple product transport which transports the 
measure $\nu_p^n$ onto $\nu_q^n$. We shall be particularly interested in the cases 
$p = 1$ and $p = 2$, but most of the results can be stated in the more general 
setting.

\begin{Def}
For $1 \leq p, q< \infty$ we define the map $w_{p,q}:\R\ra\R$ by 
\begin{equation}
\label{defw} \frac{1}{\gamma_p} \int_x^\infty e^{-t^p} dt = 
\frac{1}{\gamma_q}\int_{w_{p,q}(x)}^\infty e^{-t^q} dt.
\end{equation}
By $v_p$ we denote $w_{p,1}$. We also define $W_{p,q}^n : \R^n \ra \R^n$ by 
$$
W_{p,q}^n(x_1,x_2,\ldots,x_n) = (w_{p,q}(x_1),w_{p,q}(x_2),\ldots,w_{p,q}(x_n)).
$$
\end{Def}

Note that $w_{p,q}^{-1} = w_{q,p}$ and $(W_{p,q}^n)^{-1} = W_{q,p}^n$. 
Differentiating equality (\ref{defw}) we get 
\begin{equation}
\label{difw}
w_{p,q}'(x) = \frac{\gamma_q}{\gamma_p} e^{-x^p + w_{p,q}^q(x)}.
\end{equation}

We will prove that $w_{p,q}$ behaves very much like $x^{p\slash q}$ for large $x$, 
and is more or less linear for small $x$. We begin with the bound for $q=1$.

\begin{lem}
\label{est_vp}
For $p\geq 1$ we have\\
i) $v_p(x)\geq x^p+\ln(p\gamma_p x^{p-1})$ and $v_p'(x)\geq px^{p-1}$ for
$x\geq 0$,\\
ii) $v_p(x)\leq e+ x^p+\ln(p\gamma_p x^{p-1})$ and $v_p'(x)\leq e^e px^{p-1}$ 
for $x\geq 1$,\\
iii) $|v_p(x)-v_p(y)|\geq 2^{1-p}|x-y|^p$.
\end{lem}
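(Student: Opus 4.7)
The plan is to reduce everything to bounds on the tail integral $L(x) := \int_x^\infty e^{-t^p}dt$. Since $\gamma_1 = 1$, the defining identity (\ref{defw}) gives $e^{-v_p(x)} = L(x)/\gamma_p$, so differentiation yields $v_p'(x) = e^{v_p(x) - x^p}/\gamma_p$. Thus the inequalities in (i) and (ii) for $v_p$ are equivalent to $L(x) \leq e^{-x^p}/(px^{p-1})$ and $L(x) \geq e^{-e}e^{-x^p}/(px^{p-1})$ respectively, and the corresponding bounds on $v_p'$ then drop out from that formula. For (i), the upper bound on $L(x)$ is immediate: since $pt^{p-1} \geq px^{p-1}$ for $t \geq x > 0$, one can pull $1/(px^{p-1})$ outside and integrate the exact derivative $pt^{p-1}e^{-t^p}$. (The $x=0$ case is trivial, with an equality for $p=1$.)

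For (ii), I would substitute $u = t^p$ to rewrite $L(x) = \int_{x^p}^\infty e^{-u}/(pu^{(p-1)/p})du$ and then restrict the integration to the unit interval $[x^p, x^p + 1]$. For $x \geq 1$, one has $u \leq 2x^p$ on this range, so $u^{(p-1)/p} \leq 2 x^{p-1}$, which yields $L(x) \geq (1-e^{-1})e^{-x^p}/(2px^{p-1})$. Since $\ln(2/(1-e^{-1})) < e$, this is the required lower bound.

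For (iii), I would use that $v_p$ is odd (by symmetry of the density $e^{-|t|^p}$) and, by (i), $v_p'(s) \geq ps^{p-1}$ on $[0,\infty)$. For $0 \leq x \leq y$, integration gives $v_p(y) - v_p(x) \geq y^p - x^p \geq (y-x)^p \geq 2^{1-p}(y-x)^p$, using the superadditivity $(a+b)^p \geq a^p + b^p$ valid for $a,b \geq 0$ and $p \geq 1$ (which is elementary: fixing $a=1$, the function $g(b) = (1+b)^p - 1 - b^p$ has $g(0)=0$ and $g'(b) \geq 0$). The same-sign negative case reduces to this via oddness. In the mixed-sign case $x \leq 0 \leq y$, oddness gives $|v_p(y) - v_p(x)| = v_p(y) + v_p(|x|) \geq y^p + |x|^p$, and convexity of $u \mapsto u^p$ at the midpoint of $\{y, |x|\}$ produces $y^p + |x|^p \geq 2^{1-p}(y + |x|)^p = 2^{1-p}|y-x|^p$.

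The main obstacle is part (ii): naive bounds on $L(x)$, such as $\int_x^{x+h} e^{-t^p}dt \geq h e^{-(x+h)^p}$ with $h \sim 1/(px^{p-1})$, produce constants that blow up in $p$ because $(x+h)^p - x^p$ can be of order $2^{p-1}$. The substitution $u = t^p$ is what converts the tail into a Gamma-like integral for which a unit-length interval at the endpoint yields a $p$-uniform constant. Note that the factor $2^{1-p}$ in (iii) is essentially saturated by the mixed-sign case, where convexity of $u \mapsto u^p$ produces exactly this factor, while in the same-sign case one in fact obtains the stronger bound $(y-x)^p$.
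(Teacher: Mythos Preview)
Your proof is correct and follows essentially the same approach as the paper: the identity $v_p'(x)=e^{v_p(x)-x^p}/\gamma_p$, the upper bound on $L(x)$ via $pt^{p-1}\geq px^{p-1}$, and the same-sign/mixed-sign split in (iii) are all identical. The only difference is in (ii), where the paper restricts to the interval $[x,\,x+x^{1-p}/p]$ in the $t$ variable and uses $(1+r/p)^p\leq e^r\leq 1+er$ to get the bound $e^{-e}$ directly, whereas your substitution $u=t^p$ and restriction to $[x^p,x^p+1]$ amounts to the same interval and yields a slightly cleaner (and in fact sharper, since $2/(1-e^{-1})<e^e$) constant.
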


\begin{proof}
Note that $\gamma_1 = 1$. We have for $x\geq 0$,
\begin{equation} \label{IntegralpUpper}
e^{-v_p(x)}=\frac{1}{\gamma_p}\int_x^\infty e^{-t^p} dt 
\leq \frac{1}{p\gamma_px^{p-1}} \int_x^\infty p t^{p-1} e^{-t^p} dt=
\frac{e^{-x^p}}{p\gamma_px^{p-1}}
\end{equation}
and for $x\geq 1$, since 
$(1 + r\slash p)^p \leq e^r \leq 1 + er$ for $r \in [0,1]$, we get
$$
e^{-v_p(x)}\geq
\frac{1}{\gamma_p}\int_x^{x+x^{1-p}/p} e^{-t^p} \geq 
\frac{1}{p\gamma_px^{p-1}} e^{-(x + x^{1-p}/p)^p} 
\geq e^{-e}\frac{e^{-x^p}}{p\gamma_p x^{p-1}}.
$$
Notice that by (\ref{difw}), $v_p'(x)=e^{-x^p+v_p(x)}/\gamma_p$, hence
we may estimate $v_p'$ using the just derived bounds on $v_p$.

The lower bound on $v_p'$ yields $|v_p(x)-v_p(y)|\geq |x-y|^p$ for $x,y\geq 0$. The same 
estimate holds for $x,y\leq 0$, since $v_p$ is odd. Finally for $x\geq 0\geq y$ we
have
$$
|v_p(x)-v_p(y)|=|v_p(x)|+|v_p(y)|\geq |x|^p+|y|^p\geq 2^{1-p}|x-y|^p.
$$
\end{proof}

\begin{lem}
\label{est_wpq}
i) For $p\geq q\geq 1$,
 $|w_{p,q}(x)|\geq |x|^{p/q}$ and
 $w_{p,q}'(x)\geq\frac{\gamma_q}{\gamma_p}\geq\frac{1}{2}$. 
\\
ii) For $p\geq 2$, $w_{p,2}'(x)\geq \frac{1}{8} \sqrt{p} |x|^{p\slash 2 - 1}.$
\end{lem}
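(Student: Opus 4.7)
The plan is to prove part (i) via a single integral comparison and part (ii) by combining Mills--ratio--type tail estimates on both sides of the defining identity (\ref{defw}).

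\textbf{Part (i).} The derivative formula (\ref{difw}) reads $w_{p,q}'(x)=(\gamma_q/\gamma_p)e^{w_{p,q}(x)^q-x^p}$, so the two assertions $w'_{p,q}(x)\geq\gamma_q/\gamma_p$ and $|w_{p,q}(x)|\geq|x|^{p/q}$ are pointwise equivalent. Since $w_{p,q}$ is odd, it suffices to establish $w_{p,q}(x)\geq x^{p/q}$ for $x\geq 0$, which by (\ref{defw}) and the substitution $t=s^{p/q}$ on the $q$--tail reduces to
\[
\Phi(x) := \frac{p}{q\gamma_q}\int_x^\infty s^{p/q-1}e^{-s^p}\,ds - \frac{1}{\gamma_p}\int_x^\infty e^{-s^p}\,ds \geq 0,\qquad x\geq 0.
\]
A direct computation (substituting $u=s^p$ and using $\Gamma(1/q)=q\gamma_q$) gives $\Phi(0)=0$, and clearly $\lim_{x\to\infty}\Phi(x)=0$. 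The derivative $\Phi'(x)=e^{-x^p}\bigl(1/\gamma_p-(p/(q\gamma_q))x^{p/q-1}\bigr)$ changes sign exactly once from $+$ to $-$ (for $p>q$), so $\Phi$ is unimodal with vanishing boundary values; hence $\Phi\geq 0$. The bound $\gamma_q/\gamma_p\geq 1/2$ is immediate from the fact that $\gamma_p=\Gamma(1+1/p)\in[0.88,1]$ for $p\geq 1$.

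\textbf{Part (ii).} Integration by parts yields, for $x,w>0$,
\[
\int_x^\infty e^{-t^p}\,dt\leq\frac{e^{-x^p}}{px^{p-1}}, \qquad \int_w^\infty e^{-t^2}\,dt\geq\frac{we^{-w^2}}{2w^2+1}.
\]
Substituting both into $\gamma_2\int_x^\infty e^{-t^p}\,dt=\gamma_p\int_w^\infty e^{-t^2}\,dt$ and then applying (\ref{difw}) to rewrite $w'_{p,2}(x)=(\gamma_2/\gamma_p)e^{w^2-x^p}$ produces the key estimate
\[
w'_{p,2}(x)\geq\frac{pwx^{p-1}}{2w^2+1}.
\]
Writing $w=\alpha x^{p/2}$ with $\alpha\geq 1$ (by part (i)), the target inequality is equivalent to the quadratic $2x^p\alpha^2-8\sqrt{p}\,x^p\alpha+1\leq 0$, which holds on an interval $[\alpha_-,\alpha_+]$ with $\alpha_+\approx 4\sqrt{p}$ and $\alpha_-\approx 1/(4\sqrt{p}\,x^p)$ (via Vieta's $\alpha_-\alpha_+=1/(2x^p)$). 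The lower constraint $\alpha\geq\alpha_-$ is automatic from part (i) as soon as $x^p$ is not too small. For the upper constraint $\alpha\leq\alpha_+$, I will use the reverse pair $\int_x^\infty e^{-t^p}dt\geq e^{-e-x^p}/(px^{p-1})$ (valid for $x\geq 1$, proved by estimating $(x+1/(epx^{p-1}))^p\leq x^p+e$) together with $\int_w^\infty e^{-t^2}dt\leq e^{-w^2}/(2w)$ to obtain $w^2\leq x^p+e+\ln(\gamma_p p/(2\gamma_2))+(p/2-1)\ln x$, from which $\alpha\leq 4\sqrt{p}$ follows for all $x\geq 1$.

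\textbf{Main obstacle.} The delicate regime is $x$ of order $1$ with $p$ large: here $w(x)\sim\sqrt{\log p}$, so $\alpha$ is neither close to $1$ nor to $\sqrt{p}$, and the logarithmic terms must combine precisely to deliver the explicit factor $1/8$. For $x$ small enough that $(\sqrt{p}/8)|x|^{p/2-1}\leq\gamma_2/\gamma_p$, the trivial inequality $w'_{p,2}(x)\geq\gamma_2/\gamma_p$ from part (i) already closes the argument, and the two regimes must be patched at their overlap; this matching is what forces the explicit constant $1/8$. The extension to $x<0$ is immediate by oddness of $w_{p,2}$.
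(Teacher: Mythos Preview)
Your argument for part (i) is correct and essentially equivalent to the paper's: both reduce to the same integral comparison after the substitution $t=s^{p/q}$; the paper phrases it as a monotone weight (the tail ratio with increasing weight $u^{p/q-1}$ dominates the unweighted tail ratio), while you phrase it as unimodality of $\Phi$. These are two sides of the same coin.

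For part (ii), however, your approach has a genuine gap that is not merely a matter of writing out details. Your key estimate $w'_{p,2}(x)\geq \dfrac{pwx^{p-1}}{2w^2+1}$ is correct, but the right-hand side is \emph{not monotone} in $w$: it increases for $w<1/\sqrt{2}$ and decreases afterward. Consequently, to extract the target bound you need \emph{both} a lower and an upper bound on $w$. The lower bound $w\geq x^{p/2}$ comes from (i); the upper bound you establish only for $x\geq 1$. The trivial bound $w'\geq\gamma_2/\gamma_p$ covers the range $\sqrt{p}\,x^{p/2-1}\leq 4\sqrt{\pi}$, whose right endpoint $x_0=(4\sqrt{\pi}/\sqrt{p})^{2/(p-2)}$ satisfies $x_0^p\sim 1/p$ for large $p$. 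In the interval $(x_0,1)$, your lower tail bound $\int_x^\infty e^{-t^p}\,dt\geq e^{-e-x^p}/(px^{p-1})$ fails (the step $(x+1/(epx^{p-1}))^p\leq x^p+e$ requires $(1+1/(epx^p))^{p-1}\leq e^2$, which blows up at $x^p\sim 1/p$), and the crude bound $w(x)\leq w(1)\sim\sqrt{\ln p}$ gives $\alpha\sim\sqrt{p\ln p}>\alpha_+\sim 4\sqrt{p}$. So neither branch of your dichotomy covers this strip, and the ``patching'' you allude to does not in fact close.

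The paper circumvents this by a different idea: rather than bounding $w$ from above, it proves directly the lower bound
\[
w_{p,2}(x)\ \geq\ u_p(x):=\max\Bigl\{\tfrac{\sqrt{\pi}}{2}x,\ \sqrt{\bigl(x^p+\ln(\sqrt{p}\,x^{p/2-1}/(4\sqrt{\pi}))\bigr)_+}\Bigr\}
\]
via a short contradiction argument using the same tail estimates (\ref{IntegralpUpper}) and the Gaussian lower bound $\int_z^\infty e^{-t^2}\,dt\geq e^{-z^2}/(2\sqrt{z^2+1})$. Since $u_p(x)^2-x^p\geq \ln(\sqrt{p}\,x^{p/2-1}/(4\sqrt{\pi}))$ for \emph{all} $x>0$ (the positive-part device handles the small-$x$ regime automatically), the derivative formula (\ref{difw}) immediately yields $w'_{p,2}(x)\geq(\gamma_2/\gamma_p)\cdot\sqrt{p}\,x^{p/2-1}/(4\sqrt{\pi})=\sqrt{p}\,x^{p/2-1}/8$. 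The point is that only a \emph{lower} bound on $w$ is needed once you target $w^2-x^p$ directly, and this is what your route misses.
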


\begin{proof}
Since the function $w_{p,q}$ is odd, we may and will assume that $x\geq 0$.

i) We have by the monotonicity of $u^{p/q-1}$ on $[0,\infty)$,
\begin{align*}
\frac{1}{\gamma_p} \int_x^\infty e^{-t^p} dt 
&= \frac{1}{\gamma_q} \int_{w_{p,q}(x)}^\infty e^{-t^q} dt 
= \frac{\int_{w_{p,q}(x)}^\infty e^{-t^q} dt}{\int_0^\infty e^{-t^q} dt} 
= \frac{\int_{w_{p,q}(x)^{q\slash p}}^\infty u^{p \slash q - 1} e^{-u^p} du}
{\int_0^\infty u^{p \slash q - 1} e^{-u^p} du} 
\\
&\geq 
\frac{\int_{w_{p,q}(x)^{q\slash p}}^\infty e^{-u^p} du}{\int_0^\infty e^{-u^p} du} 
= \frac{1}{\gamma_p} \int_{w_{p,q}(x)^{q\slash p}}^\infty e^{-u^p} du,
\end{align*}
thus $w_{p,q}(x)^{q \slash p} \geq x$ and $w_{p,q}(x) \geq x^{p\slash q}$.
Formula (\ref{difw}) gives $w_{p,q}'(x) \geq \gamma_q/\gamma_p \geq 1/2$.

ii) 
We begin by the following  Gaussian tail estimate for $z > 0$: 
\begin{equation}
\label{Integral2Lower}
\int_z^\infty e^{-t^2} dt \geq \frac{1}{2\sqrt{z^2+1}} e^{-z^2}.
\end{equation} 
We have equality when $z \ra \infty$, and direct calculation shows the derivative 
of the left--hand--side is no larger than the derivative of the right--hand--side.

Let $\kappa:=4\sqrt{\pi}$, we will now show that for all $x>0$ and $p\geq 2$,
\begin{equation}
\label{est_by_up}
w_{p,2}(x)\geq u_{p}(x) := 
\max\Big\{\frac{\sqrt{\pi}}{2}x,
\sqrt{\big(x^p+\ln(\sqrt{p}x^{p/2-1}/\kappa)\big)_+}\Big\}.
\end{equation}
Suppose on the contrary that $w_{p,2}(x) < u_p(x)$ for some $p\geq 2$ and $x>0$.
Note that by i) we have $w_{p,2}' \geq \gamma_2/\gamma_p \geq \gamma_2 = \sqrt{\pi}/2$. 
Thus $u_p(x)$ is equal to the second part of the maximum. This in particular 
implies
that $x\geq 2/3$, since for $x < 2/3$ we have 
$$
x^p+\ln(\sqrt{p}x^{p/2-1}/\kappa)
\leq \frac{4}{9}+\Big(\frac{p}{2}-1\Big)\ln\frac{2}{3}+
\frac{\sqrt{p}}{\kappa}-1\leq 0.
$$
Therefore $u_p(x)\geq \sqrt{\pi}x/2\geq 1/\sqrt{3}$. Now by (\ref{IntegralpUpper}),
(\ref{defw}) and (\ref{Integral2Lower}),
\begin{align*} 
\sqrt{\pi}\frac{1}{px^{p-1}}e^{-x^p} 
&\geq \frac{\gamma_2}{\gamma_p} \frac{1}{px^{p-1}} e^{-x^p} 
\geq \frac{\gamma_2}{\gamma_p} \int_x^\infty e^{-t^p}dt 
= \int_{w_{p,2}(x)}^\infty e^{-t^2} dt 
\\
&> \int_{u_p(x)}^\infty e^{-t^2} dt 
\geq \frac{1}{2\sqrt{u_p^2(x)+1}}e^{-u_p^2(x)}
\geq \frac{1}{4u_p(x)}e^{-u_p^2(x)}
\\
&
=\frac{1}{4u_p(x)}e^{-(x^p+\ln(\sqrt{p}x^{p/2-1}/\kappa))}
=\frac{\sqrt{\pi}}{\sqrt{p}u_p(x)}x^{1-p/2}e^{-x^p}.
\end{align*}
After simplifying this gives $u_p(x) > \sqrt{p}x^{p/2}$.
Hence
$$
px^p<u_p^2(x)=
x^p+\frac{1}{2}\ln(px^{p})+\ln\frac{1}{\kappa x}\leq
\frac{p}{2}x^p+\frac{1}{2}px^p=px^p,
$$ 
which is impossible. This condratiction shows that (\ref{est_by_up}) holds.

Thus we have $w_{p,2}(x) \geq u_p(x)$ and by (\ref{difw}) we obtain
$$
w_{p,2}'(x) \geq \frac{\gamma_2}{\gamma_p}e^{-x^p + u_p^2(x)} \geq 
\frac{\sqrt{\pi}}{2}\frac{1}{\kappa} \sqrt{p} x^{p/2-1}=
\frac{1}{8}\sqrt{p} x^{p/2-1}.
$$
\end{proof}

\begin{remark}
By taking $u_p(x) = \max\{\sqrt{\pi}x/2,
\sqrt{(x^p + \ln (px^{p/2-1}/(\kappa\ln p)))_+}\}$ for sufficiently large $\kappa$
and estimating carefully one may arrive at the bound 
$w_{p,2}'(x) \geq C^{-1} p x^{p/2-1}/\ln p$. One cannot, however, receive a bound 
of the order of $p x^{p\slash 2 - 1}$.
\end{remark}

\begin{prop}
\label{Wproperties}
For $p \geq q \geq 1$ we have\\
i) $\nu_p^n(W_{q,p}^n(A)) = \nu_q^n(A)$ for $A\in \BB(\R^n)$,
\\
ii)  $|w_{q,p}(x)-w_{q,p}(y)|\leq 2|x-y|$ for $x\in\R$,
\\
iii)  for $x,y \in \R^n$ and $r \geq 1$, 
$$
\|W_{q,p}^n(x) - W_{q,p}^n(y)\|_r\leq 2\|x - y\|_r,
$$
\\
iv) for $x,y \in \R$,
$$
\big|w_{1,p}(x) - w_{1,p}(y)\big| \leq 2\min(|x-y|,|x-y|^{1/p})\leq
2|x-y|^{1/q},
$$
v) $\|W_{1,p}^n(x)-W_{1,p}^n(y)\|_{q}^q \leq 2^q\|x-y\|_1$ for $x,y \in \R^n$.
\end{prop}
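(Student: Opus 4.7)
All five parts follow from the defining relation (\ref{defw}) together with the derivative bounds already collected in Lemmas \ref{est_vp} and \ref{est_wpq}; I would proceed in the stated order.

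Part (i) is direct from (\ref{defw}): that equation is the statement $\nu_q[x,\infty)=\nu_p[w_{q,p}(x),\infty)$, and since $w_{q,p}$ is odd and strictly increasing this means $w_{q,p}$ pushes $\nu_q$ forward to $\nu_p$; the claim for $W_{q,p}^n$ then follows by independence. Part (ii) uses Lemma \ref{est_wpq}(i): the bound $w_{p,q}'\geq 1/2$ together with $w_{q,p}=w_{p,q}^{-1}$ yields $w_{q,p}'\leq 2$ everywhere, hence the Lipschitz bound. Part (iii) is (ii) applied coordinate by coordinate, as $\|W_{q,p}^n(x)-W_{q,p}^n(y)\|_r^r=\sum_i|w_{q,p}(x_i)-w_{q,p}(y_i)|^r$.

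For (iv) the first inequality has two halves. The bound $\leq 2|x-y|$ is (ii) specialized to $q=1$. The bound $\leq 2|x-y|^{1/p}$ is obtained by inverting Lemma \ref{est_vp}(iii): setting $u=v_p(a)$, $v=v_p(b)$ (so $a=w_{1,p}(u)$, $b=w_{1,p}(v)$), the estimate $|v_p(a)-v_p(b)|\geq 2^{1-p}|a-b|^p$ becomes
$$|u-v|\geq 2^{1-p}|w_{1,p}(u)-w_{1,p}(v)|^p,$$
which rearranges to $|w_{1,p}(u)-w_{1,p}(v)|\leq 2^{1-1/p}|u-v|^{1/p}\leq 2|u-v|^{1/p}$. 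The second inequality of (iv) is a two-case check split at $|x-y|=1$: if $|x-y|\leq 1$ then the minimum is $|x-y|\leq|x-y|^{1/q}$ (since $q\geq 1$), while if $|x-y|\geq 1$ the minimum is $|x-y|^{1/p}\leq|x-y|^{1/q}$ (since $1/p\leq 1/q$).

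For (v), raising the first inequality of (iv) to the $q$-th power gives
$$|w_{1,p}(x_i)-w_{1,p}(y_i)|^q\leq 2^q\min\bigl(|x_i-y_i|^q,|x_i-y_i|^{q/p}\bigr).$$
A short case split at $|x_i-y_i|=1$ (use $q\geq 1$ below, $q/p\leq 1$ above) shows this minimum is always dominated by $|x_i-y_i|$, and summing over $i$ gives $\|W_{1,p}^n(x)-W_{1,p}^n(y)\|_q^q\leq 2^q\|x-y\|_1$. No step is a genuine obstacle: all the analytic content was already done in Lemmas \ref{est_vp} and \ref{est_wpq}, and the only mild delicacy is the exponent bookkeeping in (iv)–(v).
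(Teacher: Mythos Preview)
Your proposal is correct and follows essentially the same route as the paper: (i) from the defining relation, (ii) from Lemma~\ref{est_wpq}(i) via $w_{q,p}=w_{p,q}^{-1}$, (iii) coordinatewise from (ii), and (iv) by combining (ii) with the inversion of Lemma~\ref{est_vp}(iii). For (v) you re-split at $|x_i-y_i|=1$ starting from the first inequality of (iv), which is fine, but note the slightly shorter path the paper takes: the second inequality of (iv) already gives $|w_{1,p}(x_i)-w_{1,p}(y_i)|\le 2|x_i-y_i|^{1/q}$, so raising to the $q$-th power and summing yields (v) immediately.
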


\begin{proof} Property i) follows from the definition
of $w_{q,p}$ and $W_{q,p}^n$. Since $w_{q,p}=w_{p,q}^{-1}$ we get
ii) by Lemma \ref{est_wpq} i). Property iii) is a direct consequence of ii).

By Lemma \ref{est_vp} iii), 
$$
|w_{1,p}(x)-w_{1,p}(y)|=|v_{p}^{-1}(x)-v_p^{-1}(y)|\leq 2^{1-1/p}|x-y|^{1/p}.
$$
The above inequality together with ii) gives iv) and iv) yields v). 
\end{proof}

Now we define a transport from the exponential measure $\nu^n$ to $\mpn$ for 
$p \geq 2$:
\begin{Def} 
For  $n\in \N$ and $2\leq p<\infty$ we define the map 
$S_{p,n}\colon \R^n \ra \R^n$ by $S_{p,n}(x):= T_{p,n}(W_{1,p}^n(x))$.
\end{Def}

This transport satisfies the following bound:

\begin{prop} 
\label{kontr_spn} 
We have $\|S_{p,n}(x)-S_{p,n}y\|_2\leq 4\|x-y\|_2$ for
all $x,y\in\R^n$ and $p\geq 2$.
\end{prop}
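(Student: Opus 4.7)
The plan is to bound the $\ell_2$-operator norm of the Jacobian $DS_{p,n}(z)$ by $4$ at every $z\neq 0$ and then integrate along line segments, using continuity of $S_{p,n}$ at the origin to handle the single exceptional point. By the chain rule, $DS_{p,n}(z)h = DT_{p,n}(\tilde z)\,k$, where $\tilde z := W_{1,p}^n(z)$ and $k_i := w_{1,p}'(z_i)\,h_i$. Proposition \ref{Wproperties} ii) gives $|w_{1,p}'| \leq 2$, hence $\|k\|_2 \leq 2\|h\|_2$, and the real work lies in estimating $DT_{p,n}(\tilde z)\,k$ in the $\ell_2$-norm.

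Writing $T_{p,n}(\tilde z) = \tilde z\,\phi(r)$ with $\phi(r) := f_{p,n}(r)/r$ and $r := \|\tilde z\|_p$, a direct differentiation yields
$$
DT_{p,n}(\tilde z)\,k \;=\; \phi(r)\,k \;+\; \frac{\phi'(r)\,m}{r^{p-1}}\,\tilde z, \qquad m := \sum_j \mathrm{sgn}(\tilde z_j)\,|\tilde z_j|^{p-1}\,k_j.
$$
By Proposition \ref{transportBpProperties} ii), $\phi(r) \leq 1/(2\gamma_p) \leq 1$, so the first summand contributes at most $\|k\|_2 \leq 2\|h\|_2$; the task reduces to showing the second summand is also bounded by $2\|h\|_2$.

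The main obstacle is the pointwise cancellation needed to control $|m|$. The key observation is that $|\tilde z_j|^{p-1}\,|w_{1,p}'(z_j)| \leq 1/p$, which follows from $w_{1,p} = v_p^{-1}$ (hence $w_{1,p}'(z_j) = 1/v_p'(\tilde z_j)$) combined with the lower bound $v_p'(t) \geq p\,|t|^{p-1}$ from Lemma \ref{est_vp} i). Consequently $|\tilde z_j|^{p-1}\,|k_j| \leq |h_j|/p$ coordinatewise, so $|m| \leq \|h\|_1/p$; the factor $1/p$ here cancels the factor $p$ arising from differentiating the $\ell_p$ norm and is what prevents an $n$-dependent blow-up. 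Plugging this in together with the estimate $|\phi'(r)| \leq \min\{1,2pr^p/n\}/r$ from Proposition \ref{transportBpProperties} iii), the H\"older inequality $\|\tilde z\|_2 \leq n^{1/2-1/p}\,r$ (where $p \geq 2$ is used), and $\|h\|_1 \leq \sqrt{n}\,\|h\|_2$ bounds the second summand by $\min\{1,2pr^p/n\}\cdot n^{1-1/p}/(p\,r^{p-1})\cdot\|h\|_2$. A short case split according to whether $2pr^p/n$ is at most or greater than $1$ --- using $r \leq (n/(2p))^{1/p}$ in the first regime and $r^{p-1} > (n/(2p))^{(p-1)/p}$ in the second --- yields a bound of $2\|h\|_2$ in each case. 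Summing gives $\|DS_{p,n}(z)h\|_2 \leq 4\|h\|_2$, and integrating along $[x,y]$ finishes the proof.
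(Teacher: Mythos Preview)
Your proof is correct and follows essentially the same approach as the paper: both bound the $\ell_2$-operator norm of the Jacobian $DS_{p,n}$ by $4$, decompose it as a diagonal part (bounded by $2$ via $f_{p,n}(s)/s\leq 1$ and $|w_{1,p}'|\leq 2$) plus a rank-one part, and control the latter using the crucial pointwise bound $|\tilde z_j|^{p-1}|w_{1,p}'(z_j)|\leq 1/p$ from Lemma~\ref{est_vp}, the estimate $|\phi'(r)|\leq r^{-1}\min\{1,2pr^p/n\}$ from Proposition~\ref{transportBpProperties} iii), and H\"older's inequality $\|\tilde z\|_2\leq n^{1/2-1/p}r$. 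The only cosmetic difference is that you bound $|m|$ via the $\ell_\infty/\ell_1$ pairing and then use $\|h\|_1\leq\sqrt n\,\|h\|_2$, whereas the paper applies Cauchy--Schwarz directly to get $|m|\leq(\sum_i\beta(x_i)^2)^{1/2}\|h\|_2\leq(\sqrt n/p)\|h\|_2$; the resulting numerical bound and the case split on $2pr^p/n$ are identical.
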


\begin{proof}
It is enough to show that 
$\|DS_{p,n}(x)\| \leq 4$,
where $DS_{p,n}$ is the derivative matrix, and the norm is the operator norm 
from $\ell_2^n$ into $\ell_2^n$.

Let $s = \|W_{1,p}^n(x)\|_p$. By direct calculation we get
\begin{equation}
\label{der_Spn}
\frac{(\partial S_{p,n})_j}{\partial x_i}(x) =
\frac{\delta_{ij} f_{p,n}(s) w_{1,p}'(x_i)}{s} +
\alpha(s)w_{1,p}(x_j)\beta(x_i)
\end{equation}
where
$$
\alpha(s):=s^{-p-1}
\big(s f_{p,n}'(s) - f_{p,n}(s)\big)
\mbox{ and }
\beta(t):=
|w_{1,p}(t)|^{p-1} 
{\rm sgn}(w_{1,p}(t)) w_{1,p}'(t).
$$
Thus we can bound
$$
\|DS_{p,n}(x)\| \leq \frac{f_{p,n}(s)}{s}\max_i |w_{1,p}'(x_i)|
+|\alpha(s)|\big\|W_{1,p}^n(x)\big\|_2 \Big(\sum_{i=1}^n\beta^{2}(x_i)\Big)^{1/2}.
$$

Since $w_{1,p} = w_{p,1}^{-1}$, Proposition \ref{est_wpq} i) implies 
$|w_{1,p}'(x_j)| \leq 2$, while by Proposition \ref{transportBpProperties} we have
$f_{p,n}(s)/s \leq 1$. Thus the first summand can be bounded by $2$.

For the second summand note that by Proposition \ref{transportBpProperties} iii),
\begin{equation}
\label{est_alpha}
|\alpha(s)|=s^{-p}\Big|f_{p,n}'(s)-\frac{f_{p,n}(s)}{s}\Big|\leq
s^{-p}\min\Big\{1,\frac{2ps^p}{n}\Big\}.
\end{equation}
Moreover, $\|W_{1,p}^n(x)\|_{2}\leq n^{1/2-1/p}s$ by the H\"older inequality and
$$
|\beta(t)|=|w_{1,p}(t)|^{p-1}|w_{1,p}'(t)|=\frac{|w_{1,p}(t)|^{p-1}}{v_p'(w_{1,p}(t))}
\leq \frac{1}{p}.
$$
by Lemma \ref{est_vp}. Thus
\begin{align*}
\|DS_{p,n}(x)\|&
\leq 2+s^{-p}\min\Big\{1,\frac{2ps^p}{n}\Big\}n^{1/2-1/p}s\frac{n^{1/2}}{p}
\\
&\leq 2+2sn^{-1/p}\min\{ns^{-p},1\}
\leq 4.
\end{align*}
\end{proof}

\begin{prop}
\label{kontr2_spn}
For any $y,z \in \R^n$ and $p\geq 2$ we have 
$$
\|S_{p,n}(y) - S_{p,n}(z)\|_2 \leq \|W_{1,p}^n(y) - W_{1,p}^n(z)\|_2 + 
2n^{-1/2}\|y-z\|_1.
$$
\end{prop}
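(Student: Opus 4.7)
The plan is to factor the Jacobian of $S_{p,n}=T_{p,n}\circ W_{1,p}^n$ via the chain rule and then integrate along the segment from $z$ to $y$. Starting from formula $(\ref{der_Spn})$ in the proof of Proposition \ref{kontr_spn}, one may rewrite the derivative as
\[
DS_{p,n}(x)h=\frac{f_{p,n}(s)}{s}\,DW_{1,p}^n(x)h+\alpha(s)\,\langle\beta(x),h\rangle\,W_{1,p}^n(x),\qquad s=\|W_{1,p}^n(x)\|_p.
\]
Setting $x_\tau:=(1-\tau)z+\tau y$ and writing $S_{p,n}(y)-S_{p,n}(z)=\int_0^1 DS_{p,n}(x_\tau)(y-z)\,d\tau=I_1+I_2$, the claim reduces to bounding $\|I_1\|_2$ by $\|W_{1,p}^n(y)-W_{1,p}^n(z)\|_2$ and $\|I_2\|_2$ by $2n^{-1/2}\|y-z\|_1$.

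The estimate on $I_1$ must be carried out \emph{coordinate by coordinate}; applying operator norms first and using $\|\int F\,d\tau\|_2\leq \int\|F\|_2\,d\tau$ would run in the wrong direction. Because $w_{1,p}'>0$ and $0\leq f_{p,n}(s_\tau)/s_\tau\leq 1$ by Proposition \ref{transportBpProperties}(ii), the integrand defining $(I_1)_j$ is a nonnegative multiple of $w_{1,p}'(x_{\tau,j})$, so
\[
|(I_1)_j|=|y_j-z_j|\int_0^1\tfrac{f_{p,n}(s_\tau)}{s_\tau}w_{1,p}'(x_{\tau,j})\,d\tau\leq |y_j-z_j|\int_0^1 w_{1,p}'(x_{\tau,j})\,d\tau=|w_{1,p}(y_j)-w_{1,p}(z_j)|,
\]
where the last equality is the fundamental theorem of calculus. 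Taking $\ell_2$ norms coordinatewise gives $\|I_1\|_2\leq\|W_{1,p}^n(y)-W_{1,p}^n(z)\|_2$.

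For $I_2$ I would bound the rank-one map $h\mapsto \alpha(s_\tau)\langle\beta(x_\tau),h\rangle W_{1,p}^n(x_\tau)$ via its operator norm from $\ell_1$ to $\ell_2$, namely $|\alpha(s_\tau)|\,\|W_{1,p}^n(x_\tau)\|_2\,\|\beta(x_\tau)\|_\infty$. Lemma \ref{est_vp}(i) bounds $|\beta(t)|=|w_{1,p}(t)|^{p-1}w_{1,p}'(t)\leq 1/p$, while exactly the calculation used in Proposition \ref{kontr_spn} (the bound $|\alpha(s)|\leq s^{-p}\min\{1,2ps^p/n\}$ together with $\|W_{1,p}^n(x_\tau)\|_2\leq n^{1/2-1/p}s_\tau$ from H\"older with $p\geq 2$) yields, after balancing the two branches of the minimum at $s_\tau^p=n/(2p)$, the estimate $|\alpha(s_\tau)|\,\|W_{1,p}^n(x_\tau)\|_2\leq (2p)^{(p-1)/p}n^{-1/2}$. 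Since $(2p)^{(p-1)/p}/p\leq 2$ for every $p\geq 2$, integration over $\tau$ gives $\|I_2\|_2\leq 2n^{-1/2}\|y-z\|_1$, and the triangle inequality concludes the proof.

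The main subtlety is the coordinatewise handling of $I_1$: all the ingredients for estimating $I_2$ are essentially already in the proof of Proposition \ref{kontr_spn} (only with the $\ell_\infty$ norm of $\beta$ instead of its $\ell_2$ norm), but the observation that keeping the integral \emph{inside} each coordinate makes $I_1$ directly comparable to $W_{1,p}^n(y)-W_{1,p}^n(z)$, rather than to its integrated Jacobian, is what allows the desired form of the inequality.
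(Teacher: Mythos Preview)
Your proof is correct and follows essentially the same strategy as the paper's: split the derivative of $S_{p,n}$ into its diagonal piece (coming from $DW_{1,p}^n$ scaled by $f_{p,n}(s)/s\le 1$) and its rank-one piece, then handle the diagonal part coordinatewise via the fundamental theorem of calculus and the rank-one part by the estimates already established in Proposition~\ref{kontr_spn}. The only difference is the integration path: you integrate along the straight segment $x_\tau=(1-\tau)z+\tau y$, whereas the paper integrates along the ``staircase'' path $u_i(t)=(y_1,\dots,y_{i-1},t,z_{i+1},\dots,z_n)$, changing one coordinate at a time. Because $DW_{1,p}^n$ is diagonal, both paths lead to the same coordinatewise bound $|(I_1)_j|\le |w_{1,p}(y_j)-w_{1,p}(z_j)|$; your observation that one must keep the integral inside each coordinate before estimating is exactly the point, and is equally present (implicitly) in the paper's argument via the identity $\langle\sum_j a_j,e_i\rangle=\langle a_i,e_i\rangle$.
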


\begin{proof}
Let $u_i(t) = (y_1,y_2,\ldots,y_{i-1},t,z_{i+1},z_{i+2},\ldots,z_n)$ for 
$i = 1,\ldots,n$.
Note that $u_i(y_i) = u_{i+1}(z_{i+1})$, $u_1(z_1) = z$ and $u_n(y_n) = y$, hence 
$$
S_{p,n}(z) - S_{p,n}(y) = 
\sum_{i=1}^{n} \big(S_{p,n}(u_i(z_i)) - S_{p,n}(u_i(y_i))\big).
$$

Let $s_i(t):=\|w_{1,p}(u_i(t))\|_p$. By vector--valued integration 
and (\ref{der_Spn}) we get 
$$
S_{p,n}\big(u_i(z_i)\big) - S_{p,n}\big(u_i(y_i)\big) = 
\int_{y_i}^{z_i} \frac{\partial S_{p,n}}{\partial x_i}(u_i(t))dt 
= a_i+b_i,
$$
where
$$
a_i:=\int_{y_i}^{z_i}\frac{f_{p,n}(s_i(t))}{s_i(t)} w_{1,p}'(t) e_idt
$$
and
$$
b_i:=\int_{y_i}^{z_i}\alpha(s_i(t))\beta(t)W_{1,p}^{n}(u_i(t))dt.
$$
As in the proof of Proposition \ref{kontr_spn} we show that
$$
\big\|\alpha(s_i(t))\beta(t)W_{1,p}^{n}(u_i(t))\big\|_{2}\leq
2n^{-1/2}s_i(t)n^{-1/p}\min\{ns_i(t)^{-p},1\}\leq 2n^{-1/2},
$$
thus
$$
\Big\|\sum_{i=1}^{n}b_i\Big\|_2\leq
\sum_{i=1}^{n}\|b_i\|_2\leq 2n^{-1/2}\sum_{i=1}^{n}|y_i-z_i|
=2n^{-1/2}\|y-z\|_1.
$$

To deal with the sum of $a_i$'s we notice that, since
$f_{p,n}(s)/s\leq 1$ and $w_{1,p}'(x)\geq 0$, 
\begin{align*} 
\Big|\Big\langle \sum_j a_j,e_i\Big\rangle\Big| &= 
|\is{a_i}{e_i}| = 
\Big|\int_{y_i}^{z_i}\frac{f_{p,n}(s_i(t))}{s_i(t)} w_{1,p}'(t)dt\Big| 
\\
&\leq \Big|\int_{y_i}^{z_i}w_{1,p}'(t)dt\Big| =
\big|w_{1,p}(z_i) - w_{1,p}(y_i)\big|.
\end{align*} 
Thus 
$$
\|\sum_i a_i\|_2 \leq \|\sum_i \big(w_{1,p}(z_i) - w_{1,p}(y_i)\big)e_i\|_2 
= \|W_{1,p}^n(z) - W_{1,p}^n(y)\|_2.
$$
\end{proof}

\begin{cor}
\label{KontrakcjaDuzep}
If $x-y\in tB_1^n+t^{1/2}B_{2}^n$ for some $t>0$, then for all $p\geq 2$, 
$S_{p,n}x-S_{p,n}y\in 10(t^{1/2}B_2^n\cap t^{1/p}B_p^n)$.
\end{cor}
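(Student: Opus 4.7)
The plan is to decompose $x - y$ along the two summands of $tB_1^n + t^{1/2}B_2^n$ and bound $\|S_{p,n}x - S_{p,n}y\|_2$ and $\|S_{p,n}x - S_{p,n}y\|_p$ separately by chains of Lipschitz estimates through intermediate points. Start by writing $x - y = u + v$ with $\|u\|_1 \leq t$ and $\|v\|_2 \leq t^{1/2}$.

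For the $\ell_p$ bound, the key observation is that $W_{1,p}^n$ has two Lipschitz scales: it is $2$-Lipschitz in $\ell_p$ (Proposition \ref{Wproperties}(iii) with $r = p$), and it also satisfies the sublinear estimate $\|W_{1,p}^n a - W_{1,p}^n b\|_p \leq 2\|a - b\|_1^{1/p}$ (Proposition \ref{Wproperties}(v)), which is better for large displacements. To exploit both, I would refine the splitting by writing $v = v' + v''$ with $v' := v \cdot \1_{\{|v_i| \geq 1\}}$. The coordinatewise inequalities $|v'_i| \leq |v'_i|^2$ and (using $p \geq 2$) $|v''_i|^p \leq |v''_i|^2$ yield $\|v'\|_1 \leq \|v\|_2^2 \leq t$ and $\|v''\|_p \leq t^{1/p}$. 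Writing $x - y = (u+v') + v''$ and passing through the intermediate point $z := y + u + v'$, Proposition \ref{kontrakcjaLp} followed by the appropriate part of Proposition \ref{Wproperties} gives
\[
\|S_{p,n}x - S_{p,n}z\|_p \leq 4\|v''\|_p \leq 4 t^{1/p}, \qquad
\|S_{p,n}z - S_{p,n}y\|_p \leq 4\|u+v'\|_1^{1/p} \leq 4\sqrt{2}\, t^{1/p},
\]
whose sum is less than $10 t^{1/p}$.

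For the $\ell_2$ bound I would work with the intermediate point $z_0 := y + u$. Proposition \ref{kontr_spn} immediately gives $\|S_{p,n}x - S_{p,n}z_0\|_2 \leq 4\|v\|_2 \leq 4 t^{1/2}$, while Proposition \ref{kontr2_spn} reduces the remaining step to bounding $\|W_{1,p}^n z_0 - W_{1,p}^n y\|_2 + 2 n^{-1/2}\|u\|_1$. The coordinatewise consequence of Proposition \ref{Wproperties}(iv), combined with $\min\{s, s^{1/p}\} \leq s^{1/2}$ (valid for $p \geq 2$ and $s \geq 0$), gives $|w_{1,p}(z_{0,i}) - w_{1,p}(y_i)|^2 \leq 4|u_i|$, so the first summand is at most $2\|u\|_1^{1/2} \leq 2 t^{1/2}$. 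When $t \leq n$ the residual satisfies $2n^{-1/2}\|u\|_1 \leq 2 t^{1/2}$, so the total is at most $8 t^{1/2}$. When $t > n$ I would instead invoke the trivial range bound: $S_{p,n}$ takes values in $\cpn B_p^n$ with $\cpn \leq C_0 n^{1/p}$ (an absolute-constant consequence of the formula for $|B_p^n|^{-1/n}$ and Stirling), yielding $\|S_{p,n}x - S_{p,n}y\|_2 \leq 2\cpn\, n^{1/2 - 1/p} \leq C n^{1/2} \leq C t^{1/2}$, which fits inside $10 t^{1/2}$.

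The main obstacle is the $\ell_p$ estimate: neither Lipschitz scale of $W_{1,p}^n$ alone converts the $\ell_2$ hypothesis on $v$ into an $\ell_p$ bound of the correct order $t^{1/p}$. Decomposing $v$ by coordinate magnitude is precisely the device that matches the two pieces of $v$ against the two Lipschitz scales of $W_{1,p}^n$; after that the estimates assemble via the triangle inequality and the cited propositions.
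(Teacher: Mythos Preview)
Your proof is correct. The $\ell_2$ estimate matches the paper's argument essentially line for line: the paper also picks an intermediate point $z$ with $x-z\in t^{1/2}B_2^n$, $z-y\in tB_1^n$ and combines Propositions~\ref{kontr_spn} and~\ref{kontr2_spn}. The one difference is the regime $t>n$: the paper simply applies H\"older to its already-established $\ell_p$ bound, getting $\|S_{p,n}x-S_{p,n}y\|_2\leq n^{1/2-1/p}\|S_{p,n}x-S_{p,n}y\|_p\leq 8t^{1/2}$, which is cleaner than your diameter argument and sidesteps the need to check that the implicit constant in $\cpn\sim n^{1/p}$ is at most $5$.

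The $\ell_p$ estimate is where you genuinely diverge. The paper does not split $v$ or introduce an intermediate point; it works directly with the coordinatewise bound from Proposition~\ref{Wproperties}(iv),
\[
|w_{1,p}(x_i)-w_{1,p}(y_i)|^p\leq 2^p\min\big(|x_i-y_i|^p,|x_i-y_i|\big)\leq 2^p\min\big(|x_i-y_i|^2,|x_i-y_i|\big),
\]
and then sums, using (implicitly) the elementary inequality $\min\big((a+b)^2,a+b\big)\leq 2a+2b^2$ for $a,b\geq 0$ to obtain $\sum_i\min(|x_i-y_i|^2,|x_i-y_i|)\leq 2(\|u\|_1+\|v\|_2^2)\leq 4t$. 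This yields $\|S_{p,n}x-S_{p,n}y\|_p\leq 8t^{1/p}$ in one stroke. Your magnitude-splitting of $v$ achieves the same end by a more explicit mechanism---routing the large coordinates of $v$ through the $\ell_1\to\ell_p$ Lipschitz scale and the small ones through the $\ell_p\to\ell_p$ scale---and lands at $(4+4\sqrt{2})t^{1/p}$. Both fit under $10$; the paper's route is shorter, yours makes the two-scale behaviour of $w_{1,p}$ more transparent.
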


\begin{proof}
Let us fix $x,y$ with $x-y\in tB_1^n+t^{1/2}B_{2}^n$. By Proposition
\ref{Wproperties} iv),
\begin{align*}
\|W_{1,p}^n(x)-W_{1,p}^n(y)\|_p^p&=
\sum_{i}|w_{1,p}(x_i)-w_{1,p}(y_i)|^p
\\
&\leq
2^p\sum_{i}\min(|x_i-y_i|^p,|x_i-y_i|)
\\
&\leq 2^p\sum_{i}\min(|x_i-y_i|^2,|x_i-y_i|)\leq 2^{p+2}t.
\end{align*}
Thus by Proposition \ref{kontrakcjaLp}, 
$$
\|S_{p,n}x-S_{p,n}y\|_p\leq 2\|W_{1,p}^nx-W_{1,p}^ny\|_p\leq 8t^{1/p}.
$$

By H\"older's inequality $\|S_{p,n}x-S_{p,n}y\|_2\leq n^{1/2-1/p}\|S_{p,n}x-S_{p,n}y\|_p
\leq 8t^{1/2}$ for $t\geq n$.

Assume now that $t\leq n$.
Let $z$ be such that $x-z\in t^{1/2}B_2^n$ and $z-y\in tB_1^n$. Then
$S_{p,n}x-S_{p,n}z\in 4t^{1/2}B_2^n$ by Proposition \ref{kontr_spn} and
$\|W_{1,p}^n z-W_{1,p}^ny\|_2\leq 2\sqrt{t}$ by Proposition \ref{Wproperties} v).
Thus by Proposition \ref{kontr2_spn},
$$
\|S_{p,n}x-S_{p,n}z\|_2\leq 4t^{1/2}+2n^{-1/2}t\leq 6t^{1/2}.
$$
Hence $S_{p,n}x-S_{p,n}y\in 10t^{1/2}B_2^n$.
\end{proof}

The last function we define transports the Gaussian measure $\nu_2^n$ to $\mpn$ 
for $p \geq 2$.
\begin{Def} 
For  $n\in \N$ and $2\leq p<\infty$ we define the map 
$\tilde{S}_{p,n}\colon \R^n \ra \R^n$ by $\tilde{S}_{p,n}(x):= T_{p,n}(W_{2,p}^n(x))$.
\end{Def}

\begin{prop} 
\label{kontr_tildespn} 
We have $\|\tilde{S}_{p,n}(x)-\tilde{S}_{p,n}y\|_2\leq 
18\|x-y\|_2$ for
all $x,y\in\R^n$ and $p\geq 2$.
\end{prop}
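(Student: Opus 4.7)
The plan is to follow the same template as the proof of Proposition \ref{kontr_spn}: compute the Jacobian of $\tilde{S}_{p,n}$ via the chain rule, decompose it as a diagonal plus a rank--one matrix, and bound the operator norm of each summand. Since $\tilde{S}_{p,n}$ differs from $S_{p,n}$ only in that $W_{1,p}^n$ is replaced by $W_{2,p}^n$, exactly the same computation as in (\ref{der_Spn}) yields
$$
\frac{\partial(\tilde S_{p,n})_j}{\partial x_i}(x)
\;=\;
\delta_{ij}\,\frac{f_{p,n}(s)\,w_{2,p}'(x_i)}{s}
\;+\;
\alpha(s)\,w_{2,p}(x_j)\,\tilde\beta(x_i),
$$
with $s=\|W_{2,p}^n(x)\|_p$, $\alpha(s)$ as in the proof of Proposition \ref{kontr_spn}, and $\tilde\beta(t):=|w_{2,p}(t)|^{p-1}\mathrm{sgn}(w_{2,p}(t))\,w_{2,p}'(t)$. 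So the target bound $18$ reduces to bounding the diagonal part and the rank--one part in operator norm.

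For the diagonal part, Proposition \ref{transportBpProperties}\,ii) still gives $f_{p,n}(s)/s\le1$. The new input is a uniform bound $|w_{2,p}'(y)|\le 2$. This follows by combining (\ref{difw}), which reads $w_{2,p}'(y)=(\gamma_p/\gamma_2)\exp(-y^2+w_{2,p}^p(y))$, with the inequality $w_{2,p}(y)\le y^{2/p}$ for $y\ge0$. The latter is the inverse form of $w_{p,2}(x)\ge x^{p/2}$ from Lemma \ref{est_wpq}\,i), and yields $w_{2,p}^p(y)\le y^2$, killing the exponent. Hence the diagonal part contributes at most $2\gamma_p/\gamma_2\le 2$.

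For the rank--one part, H\"older gives $\|W_{2,p}^n(x)\|_2\le n^{1/2-1/p}s$, and (\ref{est_alpha}) still applies verbatim to $\alpha(s)$. The key step is the bound
$$
|\tilde\beta(t)|\;\le\;\frac{8}{\sqrt p}\,|w_{2,p}(t)|^{p/2},
$$
obtained by writing $w_{2,p}'(t)=1/w_{p,2}'(w_{2,p}(t))$ and inserting the lower bound $w_{p,2}'(z)\ge(\sqrt p/8)|z|^{p/2-1}$ from Lemma \ref{est_wpq}\,ii). Summing $\tilde\beta(x_i)^2$ against $\sum_i|w_{2,p}(x_i)|^p=s^p$ gives $\|(\tilde\beta(x_i))_i\|_2\le (8/\sqrt p)\,s^{p/2}$, so the rank--one part is at most
$$
\frac{8}{\sqrt p}\,n^{1/2-1/p}\,s^{1-p/2}\,\min\!\Big(1,\frac{2ps^p}{n}\Big).
$$
Splitting on whether $s^p\le n/(2p)$ or not, both cases simplify to the common bound $8\sqrt 2/(2p)^{1/p}$, which for $p\ge 2$ is at most $8\sqrt 2<12$. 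Adding the two contributions yields $\|D\tilde S_{p,n}\|_{2\to 2}\le 2+8\sqrt 2<18$, and the mean value inequality finishes the proof.

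The main obstacle is the rank--one term, which requires the sharper lower bound $w_{p,2}'\gtrsim\sqrt p\,|z|^{p/2-1}$ of Lemma \ref{est_wpq}\,ii) rather than just $\gamma_q/\gamma_p\ge1/2$ -- without this, one loses the dimension balance between $\|W_{2,p}^n(x)\|_2$ and $\|(\tilde\beta(x_i))\|_2$ and the estimate diverges with $p$ or $n$. The uniform bound on $w_{2,p}'$, by contrast, is routine once one spots the inversion $w_{2,p}(y)\le y^{2/p}$.
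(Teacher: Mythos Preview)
Your proof is correct and follows essentially the same approach as the paper: compute the Jacobian via the chain rule, split it into a diagonal plus a rank--one part, bound the diagonal by $2$ using $|w_{2,p}'|\le 2$ and $f_{p,n}(s)/s\le 1$, and bound the rank--one part by combining (\ref{est_alpha}), H\"older, and the key estimate $|\tilde\beta(t)|\le (8/\sqrt{p})|w_{2,p}(t)|^{p/2}$ coming from Lemma \ref{est_wpq}\,ii). Your final bookkeeping is actually slightly tighter than the paper's (you get $2+8\sqrt{2}$ for the operator norm, where the paper is content with $2+16=18$), but the argument is the same.
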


\begin{proof}
We argue in a similar way as in the proof of Proposition  \ref{kontr_spn}.
We need to show that 
$\|D\tilde{S}_{p,n}(x)\| \leq 18$. Direct calculation gives 
\begin{equation}
\label{der_tildespn}
\frac{(\partial \tilde{S}_{p,n})_j}{\partial x_i}(x) =
\frac{\delta_{ij} f_{p,n}(\tilde{s}) w_{2,p}'(x_i)}{\tilde{s}} +
\alpha(\tilde{s})w_{2,p}(x_j)\tilde{\beta}(x_i)
\end{equation}
where $\tilde{s}=\|W_{2,p}^n(x)\|_p$,
$$
\alpha(s):=s^{-p-1}
\big(s f_{p,n}'(s) - f_{p,n}(s)\big)
\mbox{ and }
\tilde{\beta}(t):=
|w_{2,p}(t)|^{p-1} 
{\rm sgn}(w_{2,p}(t)) w_{2,p}'(t).
$$
Thus we can bound
\begin{equation}
\label{est_dtspn}
\|D\tilde{S}_{p,n}(x)\| \leq 
\frac{f_{p,n}(\tilde{s})}{\tilde{s}}\max_i |w_{2,p}'(x_i)|
+|\alpha(\tilde{s})|\big\|W_{2,p}^n(x)\big\|_2
 \Big(\sum_{i=1}^n\tilde{\beta}^{2}(x_i)\Big)^{1/2}.
\end{equation}
The first summand is bounded by 2 as in the proof of Proposition \ref{kontr_spn}.
Since $w_{2,p}=w_{p,2}^{-1}$ we get by Lemma \ref{est_wpq} ii)
$$
|\tilde{\beta}(x)|=|w_{2,p}(x)|^{p-1}|w_{2,p}'(x)|=
\frac{|w_{2,p}(x)|^{p-1}}{w_{p,2}'(w_{2,p}(x))}
\leq \frac{8}{\sqrt{p}}|w_{2,p}(x)|^{p/2},
$$
hence
$$
\Big(\sum_{i=1}^n\tilde{\beta}^{2}(x_i)\Big)^{1/2}\leq \frac{8}{\sqrt{p}} 
\tilde{s}^{p/2}.
$$
Using (\ref{est_alpha}) and $\|W_{2,p}^n(x)\|_2\leq n^{1/2-1/p}\tilde{s}$ we bound
the second summand in (\ref{est_dtspn}) by
$$
\tilde{s}^{-p}\min\Big\{1,\frac{2p\tilde{s}^p}{n}\Big\}n^{1/2-1/p}\tilde{s}
\frac{8}{\sqrt{p}} \tilde{s}^{p/2}=
8p^{-1/p}\min\{u^{-1/2},2u^{1/2}\}u^{1/p}\leq 16,
$$
where $u:=p\tilde{s}^p/n$.
\end{proof}

\subsection{Applying $\nu_1$ results -- $p \leq 2$}

We start with the version of Theorem \ref{PushAndPop} for $\nu_p$.

\begin{lem} 
\label{LpPushAndPop} 
For any $A \in\BB(\R^n)$,
$p \in [1,2]$ and $t \geq 1$ at least one of the following holds:
\begin{itemize}
\item $\npn(A + 20 t^{1\slash p} B_p^n) \geq e^t \npn(A)$ or
\item $\npn\big((A + 20 t^{1\slash p} B_p^n) \cap 100 \sqrt{n} B_2^n \big)
\geq \frac{1}{2} \npn(A)$.
\end{itemize}
\end{lem}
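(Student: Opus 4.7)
\bigskip

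\textbf{Proof plan for Lemma \ref{LpPushAndPop}.} My plan is to transport the exponential-measure result of Theorem \ref{PushAndPop} over to $\npn$ using the product map $W_{1,p}^n$, which by Proposition \ref{Wproperties}(i) pushes $\nu^n$ forward to $\npn$. Set $\tilde{A}:=(W_{1,p}^n)^{-1}(A)=W_{p,1}^n(A)$; then $\nu^n(\tilde A)=\npn(A)$. Apply Theorem \ref{PushAndPop} to $\tilde A$ with parameter $s:=10t\geq 10$ (legal since $t\geq1$), obtaining one of the two dichotomy alternatives for $\tilde A$.

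The transport now requires two simple geometric inclusions. First, I claim
\[
W_{1,p}^n\bigl(\tilde A+sB_1^n\bigr)\subset A+2s^{1/p}B_p^n.
\]
This is immediate from the contraction bound $\|W_{1,p}^n(x)-W_{1,p}^n(y)\|_p^p\leq 2^p\|x-y\|_1$ of Proposition \ref{Wproperties}(v): given $y=a+b$ with $a\in\tilde A$ and $\|b\|_1\leq s$, we get $W_{1,p}^n(y)\in W_{1,p}^n(a)+2s^{1/p}B_p^n\subset A+2s^{1/p}B_p^n$. Second, I claim
\[
W_{1,p}^n(50\sqrt{n}\,B_2^n)\subset 100\sqrt{n}\,B_2^n.
\]
This follows coordinatewise from $|w_{1,p}(x)|\leq 2|x|$ for all $x$, which in turn is obtained by taking $y=0$ in Proposition \ref{Wproperties}(iv): $|w_{1,p}(x)|\leq 2\min(|x|,|x|^{1/p})\leq 2|x|$.

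To finish, because $W_{1,p}^n$ is a bijection pushing $\nu^n$ forward to $\npn$, I may apply it to both sides of either alternative of Theorem \ref{PushAndPop}. Since $2s^{1/p}=2\cdot 10^{1/p}t^{1/p}\leq 20t^{1/p}$, the first inclusion above yields
\[
\npn\bigl(A+20t^{1/p}B_p^n\bigr)\geq \nu^n\bigl(\tilde A+sB_1^n\bigr),
\]
so the second alternative of Theorem \ref{PushAndPop} applied to $\tilde A$ gives $\npn(A+20t^{1/p}B_p^n)\geq e^{s/10}\npn(A)=e^t\npn(A)$. In the first alternative of Theorem \ref{PushAndPop}, combining both inclusions (and using that $W_{1,p}^n$ commutes with intersections, being a bijection) yields
\[
\npn\bigl((A+20t^{1/p}B_p^n)\cap 100\sqrt{n}\,B_2^n\bigr)\geq \nu^n\bigl((\tilde A+sB_1^n)\cap 50\sqrt{n}\,B_2^n\bigr)\geq \tfrac12\npn(A),
\]
which is the other alternative of the lemma.

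The argument is essentially bookkeeping once the right transport is identified; the only genuine issue is ensuring the constants line up, which is why $s=10t$ is chosen so that both the exponent $s/10$ matches $t$ and the $\ell_p$-enlargement $2s^{1/p}=2\cdot 10^{1/p}t^{1/p}$ is dominated by $20\,t^{1/p}$ uniformly in $p\in[1,2]$. No deeper obstacle appears: the contraction estimate (Proposition \ref{Wproperties}(v)) was tailored exactly so that an $\ell_1$-ball enlargement at the $\nu^n$ level becomes an $\ell_p$-ball enlargement at the $\npn$ level, and the linear bound $|w_{1,p}(x)|\leq 2|x|$ handles the Euclidean truncation with only a factor of $2$ loss.
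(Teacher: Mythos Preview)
Your proof is correct and follows essentially the same approach as the paper: transport Theorem \ref{PushAndPop} via $W_{1,p}^n$ applied to $\tilde A=W_{p,1}^n(A)$ with parameter $10t$, use Proposition \ref{Wproperties}(v) to turn the $\ell_1$-enlargement into an $\ell_p$-enlargement, and use the coordinatewise bound $|w_{1,p}(x)|\le 2|x|$ (the paper quotes part (iii) rather than (iv), but the content is the same) to handle the Euclidean truncation. One cosmetic remark: for the intersection step you only need $f(C\cap D)\subset f(C)\cap f(D)$, which holds for any map, so invoking bijectivity is unnecessary there.
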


\begin{proof} 
We will use the transport $W_{1,p}^n$ from $\nn$ to $\npn$. 
Proposition \ref{Wproperties} v) gives 
$\|W_{1,p}^n(x) - W_{1,p}^n(y)\|_p^p \leq 2^p \|x-y\|_1.$ 
By Remark \ref{normComparisons} this means that 
$A + 2(10t)^{1\slash p} B_p^n \supset W_{1,p}^n (W_{p,1}^n(A) + 10 t B_1^n)$. 
Let us fix $t\geq 1$ and apply Theorem \ref{PushAndPop} to $W_{p,1}^n(A)$ and 
$10 t$. If the second case occurs, we have
\begin{align*}
%%\npn\big(A + (20 \slash \cwlow) t^{1\slash p} B_p^n\big) & \geq 
\npn(A + 20 t^{1\slash p} B_p^n) &\geq 
\npn \big(W_{1,p}^n (W_{p,1}^n(A) + 10tB_1^n)\big)  
= \nn (W_{p,1}^n(A) + 10 t B_1^n) 
\\
&\geq e^{t} \nn(W_{p,1}^n(A)) = e^{t} \npn(A).
\end{align*}

If the first case of Theorem \ref{PushAndPop} occurs, then due to Proposition
\ref{Wproperties} iii) we have $\|W_{1,p}^n(x)\|_2 \leq 2\|x\|_2$, so 
$2\alpha B_2^n \supset W_{1,p}^n(\alpha B_2^n)$ for any $\alpha>0$. Thus
\begin{align*} 
%%\npn \big((A + (20 \slash \cwlow) t^{1\slash p} B_p^n) \cap 100\sqrt{n} B_2^n\big) 
%%& \geq
\npn \big((A + 20t^{1/p} B_p^n) \cap &100 \sqrt{n} B_2^n\big) 
 \geq
\npn\big(W_{1,p}^n(W_{p,1}^n(A) +  10tB_1^n) \cap 100 \sqrt{n} B_2^n\big) 
\\ 
& =
\npn\big(W_{1,p}^n\big((W_{p,1}^n(A) + 10tB_1^n)\cap W_{p,1}^n(100 \sqrt{n} B_2^n)
\big)\big) 
\\ 
& \geq
\npn\big(W_{1,p}^n\big((W_{p,1}^n(A)+10tB_1^n)\cap 50\sqrt{n}B_2^n\big)\big)
\\ 
& = 
\nn\big(\big(W_{p,1}^n(A) +  10tB_1^n\big) \cap 50\sqrt{n}B_2^n\big) 
\\ 
& \geq 
\frac{1}{2} \nn(W_{p,1}^n(A)) = \frac{1}{2} \npn(A).
\end{align*}
\end{proof}

\begin{lem} 
\label{Magia} 
There exists a constant $C$ such that for any $p \in [1,2]$, $t > 0$ and $n\in\N$ 
we have 
$$
\npn\big(A + C(t^{1\slash p} B_p^n + t^{1\slash 2} B_2^n)\big) 
\geq \min\Big\{\frac{1}{2},e^t \npn(A)\Big\}.
$$
\end{lem}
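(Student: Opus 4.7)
The plan is to combine Lemma \ref{LpPushAndPop} (which provides a sharp $e^t$ enlargement by $t^{1/p}B_p^n$ alone, at the price of possibly restricting the mass to $100\sqrt{n}B_2^n$) with the two-level concentration for $\npn$ furnished by $\IC(48)$ (Corollary \ref{SIClogprod}), using the shape description $B_s(\npn)\sim \sqrt{s}B_2^n + s^{1/p}B_p^n$ from Corollary \ref{KsztaltBpNu}.

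For $t\leq 1$ I would apply Proposition \ref{ICconc} inequality (\ref{concsmall}) directly to $\npn$ with parameter $s:=2t$, using $\IC(48)$: this yields
\[
\npn\bigl(A + 48\, B_{2t}(\npn)\bigr) > \min\{e^{t}\npn(A),\, 1/2\},
\]
and Corollary \ref{KsztaltBpNu} gives $48\, B_{2t}(\npn)\subset C_0(t^{1/2}B_2^n + t^{1/p}B_p^n)$ with an absolute $C_0$, settling this regime.

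For $t\geq 1$ I would invoke Lemma \ref{LpPushAndPop}. If its first alternative holds, then $\npn(A + 20\, t^{1/p}B_p^n) \geq e^t \npn(A)$, and the inclusion $20\, t^{1/p}B_p^n \subset C(t^{1/p}B_p^n + t^{1/2}B_2^n)$ finishes the proof. Otherwise, set $A':= (A + 20\, t^{1/p}B_p^n)\cap 100\sqrt{n}\,B_2^n$, so that $\npn(A')\geq \tfrac{1}{2}\npn(A)$. Apply $\IC(48)$ to $A'$ with parameter $s:=2(t + \ln 2)$: Proposition \ref{ICconc} (\ref{concsmall}) gives
\[
\npn\bigl(A' + 48\, B_s(\npn)\bigr) > \min\bigl\{e^{s/2}\npn(A'),\, 1/2\bigr\}\geq \min\{e^t \npn(A),\, 1/2\}.
\]
Since $s\leq 4t$, Corollary \ref{KsztaltBpNu} yields $48\, B_s(\npn)\subset C_1(t^{1/2}B_2^n + t^{1/p}B_p^n)$. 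Combined with $A'\subset A + 20\, t^{1/p}B_p^n$, one gets $A' + 48\, B_s(\npn) \subset A + C(t^{1/p}B_p^n + t^{1/2}B_2^n)$, which completes the argument.

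The main obstacle is the careful bookkeeping of constants: the shift $2\ln 2$ in $s$ is chosen so that $e^{s/2}$ absorbs both the factor $\tfrac{1}{2}$ lost in passing to $A'$ and the factor $e^t$ demanded in the conclusion, while $s\leq 4t$ ensures the IC enlargement $48\, B_s(\npn)$ stays within the target scale $t^{1/2}B_2^n + t^{1/p}B_p^n$. One also needs uniformity of the shape-equivalence constants of Corollary \ref{KsztaltBpNu} in $p\in [1,2]$, which is built into that result.
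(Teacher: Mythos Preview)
Your proof is correct, but it is considerably more elaborate than necessary. The paper's proof is exactly your $t\leq 1$ argument, applied verbatim for \emph{all} $t>0$: by $\IC(48)$ for $\npn$ and (\ref{concsmall}) one has $\npn(A+48\,B_{2t}(\npn))\geq \min\{e^{t}\npn(A),1/2\}$, and Corollary \ref{KsztaltBpNu} turns $48\,B_{2t}(\npn)$ into $C(t^{1/p}B_p^n+t^{1/2}B_2^n)$ with a universal constant. There is nothing special about $t\leq 1$ in this step, so your entire $t\geq 1$ branch --- the appeal to Lemma \ref{LpPushAndPop}, the restricted set $A'$, the shifted parameter $s=2(t+\ln 2)$ --- is superfluous here. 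The work you do there is essentially the machinery that the paper deploys later, inside the proof of Theorem \ref{glownyWynikMaleP}, where it is genuinely needed (for $\mpn$ rather than $\npn$); for the present lemma about $\npn$ the product $\IC$ inequality already gives the full statement in one line.
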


\begin{proof}
Corollary \ref{KsztaltBpNu} gives $B_s(\npn) \subset C (s^{1\slash p} 
B_p^n + s^{1\slash 2} B_2^n)$ for $s>0$. By Corollary \ref{SIClogprod}, 
$\npn$ satisfies $IC(48)$, which, due to Proposition \ref{ICconc}  
implies $\nu_p^n(A +48B_{2t}(\npn)) \geq \min\{1\slash 2, e^t\npn(A)\}$ 
for any Borel set $A$. Thus we have 
$$
\npn \big(A + 96 C (t^{1\slash p} B_p^n + t^{1\slash 2} B_2^n)\big) 
\geq \min\{1\slash 2,e^t \npn(A)\}.
$$
\end{proof}

\begin{prop} 
\label{malaNorma}
For any $\alpha > 1$ there exists a constant $c(\alpha)$ such that for any $n\in \N$ 
and $p\geq 1$ we have 
$$
\npn(\{x : \|x\|_p < c(\alpha) n^{1\slash p}\}) < \alpha^{-n}.
$$
\end{prop}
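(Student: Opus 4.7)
\medskip

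\noindent\textbf{Proof plan.}
The plan is to treat $\|x\|_p^p=\sum_{i=1}^n|x_i|^p$ as a sum of i.i.d.\ nonnegative random variables under $\npn$, and apply the Chernoff/Laplace transform method to bound its left tail. (Equivalently, under $\npn$ the variable $\|x\|_p^p$ is $\Gamma(n/p,1)$-distributed, and one only needs a classical left-tail bound for the gamma.)

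First I would compute the Laplace transform of a single coordinate: for $X\sim\np1$, the substitution $u=(1+\lambda)|x|^p$ reduces the integral to a gamma function evaluation and gives
$$
\int_{\er} e^{-\lambda|x|^p}\,d\np1(x) = (1+\lambda)^{-1/p}\qquad(\lambda\geq 0),
$$
so by independence $\int e^{-\lambda\|x\|_p^p}\,d\npn(x) = (1+\lambda)^{-n/p}$. Markov's inequality applied to $e^{-\lambda\|x\|_p^p}$ then yields, for any $\lambda>0$,
$$
\npn\bigl(\{x : \|x\|_p<cn^{1/p}\}\bigr)
=\npn\bigl(\{x:\|x\|_p^p<c^pn\}\bigr)
\leq \bigl[e^{\lambda c^p}(1+\lambda)^{-1/p}\bigr]^n.
$$

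Next I would optimize over $\lambda$. Taking the critical point $\lambda^\star=1/(pc^p)-1$ (which is positive for $c<1/e$ and any $p\geq 1$, so in particular for all $c$ we shall care about) a short computation reduces the bracket to
$$
c\,p^{1/p}\,e^{1/p-c^p}\;=\;c\,e^{(1+\log p)/p-c^p}.
$$
The crux is to make this bound uniform in $p$: the function $p\mapsto(1+\log p)/p$ attains its maximum value $1$ on $[1,\infty)$ at $p=1$, and $-c^p\leq 0$, so the whole bracket is at most $ec$ \emph{independent of $p$}. Choosing $c(\alpha):=1/(2e\alpha)$ we get $ec=1/(2\alpha)<1/\alpha$; raising to the $n$-th power then gives the desired $\alpha^{-n}$ bound.

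The only (mild) obstacle in the plan is precisely this uniformity in $p$: at first glance the optimal Chernoff exponent seems to depend on $p$ through the factor $p^{1/p}e^{1/p}$, but the elementary calculus inequality $(1+\log p)/p\leq 1$ on $[1,\infty)$ disposes of this at once. No other step requires more than routine calculation.
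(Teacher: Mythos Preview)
Your proof is correct. The paper takes a different, more direct route: it writes the probability via $\ell_p$--polar coordinates as
\[
\npn\bigl(\{x:\|x\|_p<c\,n^{1/p}\}\bigr)=\frac{n}{\Gamma(1+n/p)}\int_0^{c\,n^{1/p}}e^{-r^p}r^{n-1}\,dr,
\]
drops the factor $e^{-r^p}\leq 1$, evaluates the remaining integral to $c^n n^{n/p}$, and then invokes Stirling on $\Gamma(1+n/p)$ to obtain a bound of the form $(Cc)^n$; the choice $c(\alpha)<(C\alpha)^{-1}$ finishes. Your Chernoff argument instead exploits the product structure of $\npn$ (equivalently, that $\|x\|_p^p$ is $\Gamma(n/p,1)$), and the uniformity in $p$ enters through the elementary inequality $(1+\log p)/p\leq 1$ rather than through Stirling. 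Your route yields an explicit constant $c(\alpha)=1/(2e\alpha)$ with no appeal to asymptotics; the paper's route is a line or two shorter but leaves the constant implicit in the Stirling estimate.
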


\begin{proof} 
We have 
\begin{align*} 
\npn(\{x : \|x\|_p < c(\alpha) n^{1\slash p}\}) = & 
\frac{n}{\Gamma(1 + \frac{n}{p})} \int_0^{c(\alpha) n^{1/p}} e^{-r^p} r^{n-1}dr  
\\ 
\leq &
\frac{n}{\Gamma(1+\frac{n}{p})} \int_0^{c(\alpha) n^{1\slash p}} r^{n-1} 
= \frac{c(\alpha)^n n^{n\slash p}}{\Gamma(1+\frac{n}{p})} 
\leq (C c(\alpha))^n,\end{align*}
where in the last step we use the Stirling approximation and $C$ as always 
denotes a universal constant. Thus it is enough to take 
$c(\alpha) < (C\alpha)^{-1}$.
\end{proof}

\begin{thm} 
\label{glownyWynikMaleP} 
There exists a universal constant $C$ such that $\mu_{p,n}$ satisfies $\CI(C)$
for any $p \in [1,2]$ and  $n\in\N$.
\end{thm}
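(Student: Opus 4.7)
The plan is to verify the equivalent form of $\CI$ from Proposition \ref{equiv}: for every Borel $B$ with $\mpn(B)>0$ and every $t\geq 2$,
\begin{equation}
\label{claim_glownyWynik}
\mpn(B+CK_t)\geq \min\{e^t\mpn(B),1/2\},\qquad K_t:=t^{1/p}B_p^n+t^{1/2}B_2^n.
\end{equation}
By Propositions \ref{Z_sub_B}, \ref{B_sub_Z}, \ref{LogToReg}, \ref{ksztaltLambdaStarMi} and Corollary \ref{KsztaltBpNu} (together with the remark following Proposition \ref{ksztaltLambdaStarMi}), $\mathcal{Z}_t(\mpn)\sim K_t$ for $t\leq n$ and $\mathcal{Z}_t(\mpn)\sim\cpn B_p^n$ for $t\geq n$, so (\ref{claim_glownyWynik}) yields $\CI$ with a universal constant. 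I would split on the size of $t$ and $\mpn(B)$: if $t\geq c_0 n$ or $\mpn(B)\leq e^{-c_0 n}$ for a suitable absolute $c_0$, Proposition \ref{small_sets} delivers (\ref{claim_glownyWynik}) at once; in the remaining regime I transport the concentration of $\npn$ to $\mpn$ via $T_{p,n}$.

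Set $B':=T_{p,n}^{-1}(B)$ and apply Lemma \ref{LpPushAndPop} with parameter $t$. If $\npn(B'+20t^{1/p}B_p^n)\geq e^t\npn(B')$, then the $\ell_p$-Lipschitz property of $T_{p,n}$ (Proposition \ref{kontrakcjaLp}) pushes this into $B+40t^{1/p}B_p^n\subset B+40K_t$, giving (\ref{claim_glownyWynik}). Otherwise let $D:=(B'+20t^{1/p}B_p^n)\cap 100\sqrt{n}B_2^n$, and introduce the good set $G:=\{x:\|x\|_p\geq c_1 n^{1/p}\}$; choose $c_1$ small (Proposition \ref{malaNorma}) so that $\npn(G^c)\leq e^{-c_0 n}/4\leq \mpn(B)/4$. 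Applying Lemma \ref{Magia} to $D\cap G$ gives
\[
\npn((D\cap G)+C_1K_t)\geq \min\{1/2,(e^t/4)\mpn(B)\}.
\]
For each $y$ in this set, write $y=x+v_p+v_2$ with $x\in D\cap G$, $v_p\in C_1t^{1/p}B_p^n$, $v_2\in C_1t^{1/2}B_2^n$, and route through $z:=x+v_2$: the $\ell_2$-Lipschitz bound of Proposition \ref{kontrakcjaL2} at $x$ (valid with $u=100/c_1$ since $\|x\|_2\leq 100\sqrt{n}$ and $\|x\|_p\geq c_1 n^{1/p}$) controls $\|T_{p,n}(z)-T_{p,n}(x)\|_2\leq C_2 t^{1/2}$, while the global $\ell_p$-Lipschitz bound controls $\|T_{p,n}(y)-T_{p,n}(z)\|_p\leq 2C_1 t^{1/p}$. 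Combined with $T_{p,n}(x)\in T_{p,n}(D)\subset B+40t^{1/p}B_p^n$, this places $T_{p,n}(y)$ in $B+C_3K_t$, and pushing forward yields (\ref{claim_glownyWynik}) up to absorbing the factor $1/4$ into $C$.

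The main obstacle is pushing the $\ell_2$-component of $K_t$ forward, since Proposition \ref{kontrakcjaL2} supplies a uniformly bounded Lipschitz constant only on the spread-out set $G$. The naive intermediate point $x+v_p$ would have $\ell_2$-norm of order $t^{1/p}$, potentially dragging it out of $G$ when $p<2$ and $t$ is close to $n^{p/2}$; the resolution is to route through $z=x+v_2$ instead, where the bounded-set alternative of Lemma \ref{LpPushAndPop} already guarantees $\|x\|_2\leq 100\sqrt{n}$ at the start point, so the $\ell_2$-Lipschitz bound applies at $x$ with $O(1)$ constant and the remaining $v_p$ step defers to the unconditional Proposition \ref{kontrakcjaLp}.
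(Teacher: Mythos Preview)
Your proof is correct and follows essentially the same route as the paper's: reduce via Proposition~\ref{small_sets} to the regime $t\lesssim n$, $\mpn(B)\gtrsim e^{-cn}$, apply Lemma~\ref{LpPushAndPop} to $T_{p,n}^{-1}(B)$, and in the second case intersect with the good set $G$ before invoking Lemma~\ref{Magia} and transporting back with the $\ell_2$-step (Proposition~\ref{kontrakcjaL2}, anchored at $x\in D\cap G$) done before the $\ell_p$-step (Proposition~\ref{kontrakcjaLp}). The only cosmetic difference is that the paper applies Lemma~\ref{Magia} with parameter $4t$ to swallow the factor $1/4$ up front, whereas you defer this to the end; your remark ``absorbing the factor $1/4$ into $C$'' is justified by the same device (replace $t$ by $t+\ln 4$ in Magia and use $K_{t+\ln 4}\subset 2K_t$ for $t\geq 2$).
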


\begin{proof}
By Propositions \ref{equiv}, \ref{small_sets}, \ref{B_sub_Z} 
and \ref{ksztaltLambdaStarMi} it is enough to show
\begin{equation}
\label{est_smallp}
\mpn \big(A + C \big(t^{1\slash p} B_p^n+ t^{1\slash 2} B_2^n\big)\big) 
\geq \min\{1\slash 2,e^t \mpn(A)\}.
\end{equation}
for  $1\leq t\leq n$ and $\mu_{p,n}(A)\geq e^{-n}$.

Recall that $T_{p,n}$ denotes the map transporting $\npn$ to $\mpn$. Apply 
Lemma \ref{LpPushAndPop} to $T_{p,n}^{-1}(A)$ and $t$. If the first case occurs, 
we have 
$$
\npn\big(T_{p,n}^{-1}(A)+20 t^{1\slash p}B_p^n\big) 
\geq e^t \npn\big(T_{p,n}^{-1}(A)\big) = e^t \mpn(A).
$$
Proposition \ref{kontrakcjaLp} gives $\|T_{p,n}x-T_{p,n}y\|_p\leq 2\|x - y\|_p$,
thus by Remark \ref{normComparisons}, 
\begin{align*}
\mpn\big(A + 40 t^{1\slash p} B_p^n\big)
&= \npn\big(T_{p,n}^{-1}\big(A + 40 t^{1\slash p} B_p^n\big)\big)
\\
&\geq \npn\big(T_{p,n}^{-1}(A) + 20 t^{1\slash p}B_p^n\big)
\geq e^t \mpn(A)
\end{align*}
and we obtain (\ref{est_smallp}) in this case.

Hence we may assume that the second case of Lemma \ref{LpPushAndPop} holds,
that is
$$
\npn(A') \geq \frac{1}{2}\npn(T_{p,n}^{-1}(A)) = \frac{1}{2}\mpn(A),
$$ 
where
$$
A':=\big(T_{p,n}^{-1}(A)+ 20 t^{1\slash p} B_p^n\big)\cap 100\sqrt{n}B_2^n.
$$
In particular $\npn(A')\geq e^{-n}/2$. Let
$$
A'':= A'\cap \{x : \|x\|_p \geq \tilde{c} n^{1\slash p}\},
$$
where $\tilde{c}=c(4e)$ is a constant given by 
Proposition \ref{malaNorma} for $\alpha=4e$. Then
$$
\npn(A'')\geq \npn(A')-(4e)^{-n}\geq \frac{1}{2}\npn(A')\geq \frac{1}{4}\mpn(A).
$$
We apply Lemma \ref{Magia} for $A''$ and $4t$ to get 
\begin{align*}
\mpn\big(T_{p,n}&\big(A''+4C\big(t^{1/p}B_p^n+t^{1/2} B_2^n\big)\big)\big)
%%=& \npn\big(A'' + 4C \big(t^{1\slash p} B_p^n + t^{1\slash 2} B_2^n\big)\Big) \\
\geq  \npn\big(A''+C\big((4t)^{1/p} B_p^n + (4t)^{1/2} B_2^n\big)\big) 
\\
&\geq  \min\Big\{\frac{1}{2},e^{4t} \npn(A'')\Big\}
\geq \min\Big\{\frac{1}{2},e^{4t} \frac{\mpn(A)}{4}\Big\} 
\geq \min\Big\{\frac{1}{2},e^t \mpn(A)\Big\}.
\end{align*}

Proposition \ref{kontrakcjaLp} and Remark \ref{normComparisons} imply
$$
T_{p,n}\big(A''+4C t^{1/2}B_2^n+4C t^{1/p}B_p^n\big) 
\subset T_{p,n}\big(A'' + 4C t^{1/2} B_2^n\big)
+8C t^{1/p} B_p^n.
$$
Moreover, for $x \in A''$ we have  $\|x\|_2 \leq 100 \sqrt{n}$ and 
$\|x\|_p \geq \tilde{c} n^{1/p}$. Thus $n^{-1/2}\|x\|_2\leq 
100\tilde{c}^{-1} n^{-1/p} \|x\|_p$, so we can use Proposition
\ref{kontrakcjaL2} along with Remark \ref{normComparisons} to get 
$$
T_{p,n}(A''+4C t^{1/2}B_2^n) \subset T_{p,n}(A'') +\tilde{C} t^{1/2} B_2^n.
$$

Proposition \ref{kontrakcjaLp}, Remark \ref{normComparisons} 
and the definitions of $A'$ and $A''$ yield
$$
T_{p,n}(A'') \subset T_{p,n}(A') \subset 
T_{p,n}\big(T_{p,n}^{-1}(A) + 20 t^{1/p} B_p^n\big) 
\subset A + 40 t^{1\slash p} B_p^n.
$$
Putting the four estimates together, we can write 
\begin{align*} 
\mpn \Big(A +&(40+8C)t^{1/p} B_p^n 
+\tilde{C} t^{1/2} B_2^n\Big)
\\
&\geq \mpn \Big(T_{p,n}(A'')+\tilde{C}t^{1/2}B_2^n + 8Ct^{1/p} B_p^n\Big) 
\\
&\geq \mpn \Big(T_{p,n}\big(A''+4C t^{1/2} B_2^n\big) + 8Ct^{1/p} B_p^n\Big) 
\\
&\geq \mpn \big(T_{p,n}\big(A'' + 4C \big(t^{1/p}B_p^n+t^{1/2} B_2^n\big)\big)\big)
\geq \min\Big\{\frac{1}{2},e^t \mpn(A)\Big\},
\end{align*} 
which gives (\ref{est_smallp}) in the second case and ends the proof.
\end{proof}

\begin{cor} 
\label{ICsmallP} 
There exists an absolute constant $C$ such that the measure $\mi_{p,n}$ 
satisfies $\IC(C)$ for any $p \in [1,2]$ and $n \in \N$. 
\end{cor}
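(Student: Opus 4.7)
The plan is to combine the concentration inequality just proved with part (ii) of Corollary \ref{IC_and_CI}. Recall that this corollary asserts that for a symmetric, isotropic, $\alpha$-regular probability measure which satisfies both $\CI(\beta)$ and Cheeger's inequality with constant $1/\gamma$, one obtains $\IC(36\min\{6e\beta,\gamma\})$. Thus the task reduces to verifying these four hypotheses for $\mi_{p,n}$ (up to a linear rescaling) with universal constants.

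First I would reduce to the isotropic case. The measure $\mi_{p,n}$ is symmetric and has full-dimensional support, so there is a linear map $L$ such that $\mi_{p,n}\circ L^{-1}$ is isotropic; by Proposition \ref{linearIC}, the $\IC$ property transfers back to $\mi_{p,n}$ once established for the isotropic image. Since $\mi_{p,n}$ is log-concave (uniform on a convex body), Proposition \ref{LogToReg} gives $1$-regularity, a property which is also preserved under linear maps (it is a statement about one-dimensional marginals of $\langle v,x\rangle$). Theorem \ref{glownyWynikMaleP} yields $\CI(C)$ with a universal constant $C$, again invariant under linear maps because the sets ${\cal Z}_p$ transform covariantly with $L$.

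The remaining ingredient is Cheeger's inequality (\ref{cheeger}) for the isotropic version of $\mi_{p,n}$ with a universal constant. This is exactly the result of Sodin cited in the introduction (\cite{So}); it provides $\kappa\geq c$ independent of $n$ and $p\in[1,2]$ for the uniform measure on a suitably normalized $B_p^n$. So I would simply invoke this as a black-box statement.

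Given all four prerequisites, Corollary \ref{IC_and_CI} (ii) yields $\IC(C')$ for the isotropic image of $\mi_{p,n}$ with a universal constant $C'$, and Proposition \ref{linearIC} then transports this back to $\mi_{p,n}$ itself. The only nontrivial obstacle in this scheme is the Cheeger inequality for $B_p^n$ in the range $p\in[1,2]$, which is precisely why the range of this corollary is restricted to $p\leq 2$ --- for $p\geq 2$ the Cheeger constant is easier to control and a different argument is employed in the next subsection.
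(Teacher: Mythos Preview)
Your proposal is correct and follows exactly the paper's approach: pass to the isotropic rescaling, use log--concavity for $1$--regularity, invoke Theorem \ref{glownyWynikMaleP} for $\CI(C)$, cite Sodin \cite{So} for Cheeger, and then apply Corollary \ref{IC_and_CI}(ii). The one point you leave implicit under ``suitably normalized'' is that Sodin's inequality (\ref{sodinPoincare}) is stated on $\cpn B_p^n$, so transferring Cheeger to the isotropic measure with a universal constant requires the routine check that $\sigma_{p,n}^2=\int x_1^2\,d\mu_{p,n}\sim 1$; the paper carries this out explicitly via the moment comparison (\ref{comp_mom}).
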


\begin{proof} 
Let
$$
\tilde{\mu}_{p,n}(A):=\mu_{p,n}(\sigma_{p,n}A),
\mbox{ where }
\sigma_{p,n}^2:=\int x_1^2d\mu_{p,n},
$$
then $\tilde{\mu}_{p,n}$ is isotropic. Both properties
$\IC$ and $\CI$ are affine invariant, so by Theorem \ref{glownyWynikMaleP},
$\tilde{\mu}_{p,n}$ has property $\CI(C)$ and we are to show that it
satisfies $\IC(C)$. By Corollary \ref{IC_and_CI} we only
need to show Cheeger's inequality for $\tilde{\mu}_{p,n}$ with uniform
constant.

A recent result of S.\ Sodin (\cite[Theorem 1]{So}) 
states (after rescaling from $B_p^n$ to $\cpn B_p^n$) that 
\begin{equation} 
\label{sodinPoincare} 
\mu_{p,n}^+(A) \geq c \min\{\mpn(A),1-\mpn(A)\} \log^{1-1/p}
\frac{1}{\min\{\mpn(A),1-\mpn(A)\}}
%%\geq c \min\{\mpn(A),1-\mpn(A)\}
\end{equation} 
for some universal constant $c$. 
Thus $\tilde{\mu}_{p,n}$ satisfies Cheeger's inequality with constant
$c\sigma_{n,p}$ and it is enough to notice that by (\ref{comp_mom}),
$$
\sigma_{p,n}\sim \frac{\cpn}{n^{1/p}}\Big(\int x_1^2d\nu_p(x)\Big)^{1/2}\sim 1.
$$
\end{proof}

\subsection{The easy case -- $p \geq 2$}

This case will follow easily from the exponential case and the facts from 
subsection \ref{Transporty}. 

\begin{thm} 
\label{conc_largeP} 
There exists a universal constant $C$ such that for any $A \subset \R^n$, 
any $t \geq 1$, $n \geq 1$ and $p \geq 2$ we have 
$$
\mpn\Big(A + C \big(t^{1\slash p} B_p^n \cap t^{1\slash 2} B_2^n\big)\Big) 
\geq \min\{1\slash 2,e^t \mi_p(A)\}.$$
\end{thm}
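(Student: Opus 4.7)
The plan is to push the two-level concentration for the exponential product measure $\nu^n$ forward to $\mpn$ via the transport $S_{p,n} = T_{p,n}\circ W_{1,p}^n$, exploiting the contraction bound from Corollary \ref{KontrakcjaDuzep}. Because the $p \geq 2$ case is the "easy" one, no small-sets patching analogous to the $p \leq 2$ proof (Theorem \ref{glownyWynikMaleP}) should be needed: the shape $\sqrt{t}B_2^n \cap t^{1/p}B_p^n$ matching $B_t(\mpn)$ (Proposition \ref{ksztaltLambdaStarMi}) appears directly as the image of $tB_1^n + \sqrt{t}B_2^n$ under $S_{p,n}$.

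First, I would extract from $\IC(48)$ for $\nu^n$ (Corollary \ref{SIClogprod}) a usable concentration inequality. Combining Proposition \ref{ICconc} (the inequality (\ref{concsmall})) with the description $B_s(\nu^n) \sim sB_1^n + \sqrt{s}B_2^n$ from Corollary \ref{KsztaltBpNu} (applied at $p=1$), there is an absolute constant $C_1 \geq 1$ such that for every Borel $B\subset\R^n$ and $t > 0$,
\[
\nu^n\bigl(B + C_1(tB_1^n + \sqrt{t}B_2^n)\bigr) \geq \min\{e^t \nu^n(B),\, 1/2\}.
\]
A rescaling $\tilde{t}:=C_1^2 t$ absorbs the constant $C_1$ into the ball (using $C_1 \geq 1$ to compare the two pieces) so the enlargement takes the canonical shape $\tilde{t}B_1^n + \sqrt{\tilde{t}}B_2^n$ required by Corollary \ref{KontrakcjaDuzep}, without weakening the lower bound.

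Second, I would set $B := S_{p,n}^{-1}(A)$; since $S_{p,n}$ pushes $\nu^n$ forward to $\mpn$ by Propositions \ref{transportBpProperties}(i) and \ref{Wproperties}(i), one has $\nu^n(B) = \mpn(A)$. Corollary \ref{KontrakcjaDuzep} then gives
\[
S_{p,n}\bigl(B + \tilde{t}B_1^n + \sqrt{\tilde{t}}B_2^n\bigr) \subset A + 10\bigl(\sqrt{\tilde{t}}B_2^n \cap \tilde{t}^{1/p}B_p^n\bigr),
\]
and substituting $\tilde{t} = C_1^2 t$ places the right-hand side inside $A + 10C_1(\sqrt{t}B_2^n \cap t^{1/p}B_p^n)$, invoking $C_1^{2/p}\leq C_1$ for $p\geq 2$. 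Pushing the measure $\mpn$ back through $S_{p,n}^{-1}$ and chaining inequalities yields the target bound with $C = 10C_1$.

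There is no serious obstacle here; the proof is essentially a clean assembly of the pieces already built. The only bookkeeping point to watch is that the exponential-side enlargement $sB_1^n + \sqrt{s}B_2^n$ is inhomogeneous in $s$, so the constant $C_1$ from the concentration step must be absorbed by the rescaling $\tilde{t} := C_1^2 t$ \emph{before} Corollary \ref{KontrakcjaDuzep} is applied; this is automatic and harmless, but neglecting it would scramble the exponents. The genuine conceptual work has already been done in Section \ref{Transporty}, where $S_{p,n}$ was shown to simultaneously contract the $\ell_1$ part (via Proposition \ref{kontr_spn}/\ref{kontr2_spn}) and the $\ell_p$ part (via Proposition \ref{kontrakcjaLp}) into the intersection shape.
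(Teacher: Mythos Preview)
Your proposal is correct and follows essentially the same route as the paper: pull $A$ back through $S_{p,n}$, apply the two-level concentration for $\nu^n$, and push the enlargement forward via Corollary~\ref{KontrakcjaDuzep}. The only cosmetic difference is that the paper cites Talagrand's inequality~(\ref{conc_nu}) directly for the $\nu^n$-concentration step, whereas you recover the same bound from $\IC(48)$ together with the shape $B_t(\nu^n)\sim tB_1^n+\sqrt{t}B_2^n$ from Corollary~\ref{KsztaltBpNu}; both are equivalent up to constants.
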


\begin{proof}
In this case we will again use the transport $S_{p,n}$. Assume 
$A \subset \cpn B_p^n$, let $\tilde{A} := S_{p,n}^{-1}(A)$. By Talagrand's
inequality (\ref{conc_nu}) we have 
$\nu^n(\tilde{A} + Ct B_1^n + \sqrt{Ct} B_2^n) \geq 
\min\{e^t \nu^n(\tilde{A}), 1/2\}$. 
However by Corollary \ref{KontrakcjaDuzep} we have 
$$
S_{p,n}\Big(\tilde{A} + Ct B_1^n + \sqrt{Ct} B_2^n)\Big) 
\subset S_{p,n}(\tilde{A}) + 10C \big(\sqrt{t} B_2^n + t^{1\slash p} B_p^n\big).
$$ 
Thus, as $S_{p,n}(\tilde{A})=A$ and $S_{p,n}$ transports the measure $\nu^n$ 
to $\mpn$, we get the thesis.
\end{proof}

By Propositions \ref{equiv}, \ref{small_sets}, \ref{B_sub_Z} 
and \ref{ksztaltLambdaStarMi}, Theorem \ref{conc_largeP} yield the following.

\begin{cor}
\label{CI_largeP}
There exists an absolute constant $C$ such that $\mpn$ satisfies $\CI(C)$
for $p\geq 2$.
\end{cor}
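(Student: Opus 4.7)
The corollary is a routine packaging of Theorem \ref{conc_largeP} with the four cited propositions. Recall that $\CI(C)$ requires, for every level $q\geq 2$ and every Borel set $\tilde A$ with $\mpn(\tilde A)\geq 1/2$, the bound $1-\mpn(\tilde A+C\mathcal{Z}_q(\mpn))\leq e^{-q}(1-\mpn(\tilde A))$. The natural strategy is to split on the size of the level, handling $2\leq q\leq n$ by Theorem \ref{conc_largeP} itself (which gives concentration in terms of $q^{1/p}B_p^n\cap q^{1/2}B_2^n$) and the regime $q\geq n$ by Proposition \ref{small_sets}.

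For the moderate range, I would take $t=q$ in Theorem \ref{conc_largeP} to obtain $\mpn\bigl(A+C(q^{1/p}B_p^n\cap q^{1/2}B_2^n)\bigr)\geq\min\{1/2,e^q\mpn(A)\}$ for every Borel $A$. Since $q^{1/p}B_p^n\cap q^{1/2}B_2^n$ is convex and symmetric, Proposition \ref{equiv} with $\gamma=e^q$ flips this into $1-\mpn(\tilde A+C(q^{1/p}B_p^n\cap q^{1/2}B_2^n))\leq e^{-q}(1-\mpn(\tilde A))$ whenever $\mpn(\tilde A)\geq 1/2$. It then suffices to show $q^{1/p}B_p^n\cap q^{1/2}B_2^n\subset C'\mathcal{Z}_q(\mpn)$ up to a universal constant. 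Corollary \ref{KsztaltBpNu} (case $p\geq 2$) gives $B_q(\npn)\sim q^{1/p}B_p^n\cap q^{1/2}B_2^n$; Proposition \ref{ksztaltLambdaStarMi} transfers this to $B_q(\mpn)\sim B_q(\npn)$ in the range $q\leq n$; and since $\mpn$ is log-concave, hence $1$-regular by Proposition \ref{LogToReg}, Proposition \ref{B_sub_Z} yields $B_q(\mpn)\subset 4e\mathcal{Z}_q(\mpn)$. Chaining the three inclusions closes the argument for $q\leq n$.

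For the large-level range $q\geq n$, Theorem \ref{conc_largeP} no longer compares favorably with $\mathcal{Z}_q$, so I would instead apply Proposition \ref{small_sets} with $c=1$ (available because $\mpn$ is symmetric and log-concave). It produces $\mpn(A+40\mathcal{Z}_q(\mpn))\geq(1/2)\min\{e^q\mpn(A),1\}=\min\{e^q\mpn(A)/2,1/2\}$. Feeding this into Proposition \ref{equiv} with $\gamma=e^q/2$, and using symmetry of $\mathcal{Z}_q(\mpn)$, gives $1-\mpn(\tilde A+40\mathcal{Z}_q(\mpn))\leq 2e^{-q}(1-\mpn(\tilde A))$ for $\mpn(\tilde A)\geq 1/2$. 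The spurious factor of $2$ can be swallowed by working at level $q+\log 2$ instead and using the scaling $\mathcal{Z}_{q+\log 2}(\mpn)\subset 2\mathcal{Z}_q(\mpn)$ provided by Remark \ref{growth_Z}, at the cost of doubling the constant multiplying $\mathcal{Z}_q$. Taking the maximum of the constants produced in the two ranges yields a single universal $C$.

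The only non-bookkeeping step is the geometric identification $q^{1/p}B_p^n\cap q^{1/2}B_2^n\sim \mathcal{Z}_q(\mpn)$ for $2\leq q\leq n$, and this is already prepared in the paper: the shape of $B_q(\npn)$ is handled in Corollary \ref{KsztaltBpNu}, the transfer from $\npn$ to $\mpn$ is Proposition \ref{ksztaltLambdaStarMi} (resting on the moment equivalence (\ref{comp_mom}) from \cite{BGMN}), and the regularity bound $B_q\subset 4e\mathcal{Z}_q$ is Proposition \ref{B_sub_Z}. Once this chain is in place, the rest of the proof is a two-line application of Proposition \ref{equiv} in each range, which is the expected obstacle only in that one must keep track of symmetry of the enlargement sets and the factor-of-two discrepancy in the large-$q$ estimate.
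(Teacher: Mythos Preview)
Your proposal is correct and follows exactly the route the paper intends: the paper's own proof of Corollary~\ref{CI_largeP} is the single sentence ``By Propositions \ref{equiv}, \ref{small_sets}, \ref{B_sub_Z} and \ref{ksztaltLambdaStarMi}, Theorem \ref{conc_largeP} yield the following,'' and you have unpacked this citation list in the natural way --- splitting on $q\le n$ versus $q\ge n$, using Theorem~\ref{conc_largeP} together with the geometric identification $q^{1/p}B_p^n\cap q^{1/2}B_2^n\sim B_q(\mpn)\subset 4e\,\mathcal{Z}_q(\mpn)$ in the first range and Proposition~\ref{small_sets} in the second, and invoking Proposition~\ref{equiv} to flip to the $\CI$ form. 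Your explicit mention of Corollary~\ref{KsztaltBpNu} and Proposition~\ref{LogToReg} simply makes visible two ingredients the paper leaves implicit, and your absorption of the factor~$2$ via Remark~\ref{growth_Z} is the standard manoeuvre.
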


\begin{thm} 
\label{Cheeger_largeP}
For any $p \geq 2$ and $n \geq 1$ the 
measure $\mpn$ satisfies Cheeger's inequality (\ref{cheeger}) with the constant 
$1\slash 20$. 
\end{thm}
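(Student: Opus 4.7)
The strategy is to transfer Cheeger's inequality from the Gaussian measure $\nu_2^n$ to $\mpn$ via the Lipschitz transport $\tilde{S}_{p,n}$ constructed at the end of Subsection~\ref{Transporty}. The work is already largely done in Proposition~\ref{kontr_tildespn}; it remains to combine that bound with the classical Gaussian isoperimetry and an elementary contraction principle.

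First I would record Cheeger's inequality for $\nu_2^n$. Since $2\gamma_2=\sqrt{\pi}$, the density of $\nu_2$ is $\pi^{-1/2}e^{-x^2}$, so $\nu_2^n$ is the standard Gaussian on $\R^n$ rescaled by $1/\sqrt{2}$. By Gaussian isoperimetry, the standard Gaussian $\gamma$ satisfies $\gamma^+(A)\geq \phi(\Phi^{-1}(\gamma(A)))$, and the quotient $\phi(\Phi^{-1}(s))/\min\{s,1-s\}$ attains its minimum $\sqrt{2/\pi}$ at $s=1/2$. Scaling coordinates by $1/\sqrt{2}$ multiplies the Cheeger constant by $\sqrt{2}$, so $\nu_2^n$ satisfies Cheeger's inequality with constant $2/\sqrt{\pi}$.

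Second I would invoke the following elementary principle: if $T:\R^n\to\R^n$ pushes $\mu$ forward to $\nu$ and is $L$-Lipschitz in $\ell_2$, then for every Borel $A$ and $t>0$,
\[
T^{-1}(A)+\tfrac{t}{L}B_2^n\subset T^{-1}\bigl(A+tB_2^n\bigr),
\]
(indeed, $x=y+z$ with $y\in T^{-1}(A)$ and $\|z\|_2\leq t/L$ gives $\|Tx-Ty\|_2\leq t$). This yields $\nu(A+tB_2^n)\geq \mu(T^{-1}(A)+t L^{-1}B_2^n)$, and hence $\nu^+(A)\geq L^{-1}\mu^+(T^{-1}(A))$. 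Combined with $\mu(T^{-1}(A))=\nu(A)$, this propagates Cheeger's inequality with a loss of factor $L$.

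Finally I would apply this with $\mu=\nu_2^n$, $\nu=\mpn$, and $T=\tilde{S}_{p,n}$, which pushes $\nu_2^n$ onto $\mpn$ by Proposition~\ref{Wproperties}(i) and Proposition~\ref{transportBpProperties}(i), and which is $18$-Lipschitz in $\ell_2$ by Proposition~\ref{kontr_tildespn}. The resulting Cheeger constant is at least $(2/\sqrt{\pi})/18 = 1/(9\sqrt{\pi}) > 1/20$, as required. There is essentially no obstacle at this stage: the main difficulty was already overcome in Proposition~\ref{kontr_tildespn}, whose proof leverages the sharp lower bound $w_{p,2}'(x)\geq \tfrac{1}{8}\sqrt{p}\,|x|^{p/2-1}$ from Lemma~\ref{est_wpq}(ii) to control the non-diagonal term in $D\tilde{S}_{p,n}$.
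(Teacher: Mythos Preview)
Your argument is correct and follows the same contraction principle as the paper, but you transport from a different source. The paper uses the exponential measure $\nu^n$ together with the $4$--Lipschitz map $S_{p,n}$ of Proposition~\ref{kontr_spn}: by \cite{BH1} $\nu^n$ satisfies Cheeger with constant $1/(2\sqrt{6})$, so $\mpn$ inherits Cheeger with constant $1/(8\sqrt{6})\geq 1/20$. You instead use the Gaussian $\nu_2^n$ and the $18$--Lipschitz map $\tilde{S}_{p,n}$ of Proposition~\ref{kontr_tildespn}, obtaining $1/(9\sqrt{\pi})>1/20$. Both routes are available within Subsection~\ref{Transporty}; yours yields a marginally better constant and dovetails with the proof of Theorem~\ref{logSobolev}, while the paper's choice keeps the argument in parallel with the $\IC$ proof (Theorem~\ref{conc_largeP}), which already relies on $S_{p,n}$ via Corollary~\ref{KontrakcjaDuzep}.
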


\begin{proof} Again we shall transport this result from the exponential measure.
By \cite{BH1} Cheeger's inequality holds for $\nu^n$ with the constant 
$\kappa=1/(2\sqrt{6})$, thus by Proposition  \ref{kontr_spn} $\mpn$ satisfies 
(\ref{cheeger}) with the constant $\kappa/4\geq 1/20$.
\end{proof}

As in the proof of Corollary \ref{ICsmallP} we show that
Theorem \ref{Cheeger_largeP} and Corollary \ref{CI_largeP} imply
infimum convolution inequality for $\mpn$, $p\geq 2$. Adding the two results together
we get

\begin{thm}
\label{glownyWynik} 
There exists a universal constant $C$ such that for any $p \in [1,\infty]$ and any 
$n \in \N$ the measure $\mpn$ satisfies $\IC(C)$.
\end{thm}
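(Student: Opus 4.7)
The plan is to split into three regimes and assemble the result from pieces already proved in the paper. For $p\in[1,2]$ the conclusion is exactly Corollary~\ref{ICsmallP}, so nothing remains to do. For $p=\infty$ the measure $\mu_{\infty,n}$ is the uniform distribution on $[-1,1]^n$, which is the product of $n$ copies of the (symmetric, log-concave) uniform measure on $[-1,1]$; Corollary~\ref{SIClogprod} then yields $\IC(48)$ immediately.

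The substantive case is $p\in[2,\infty)$. I would mimic the proof strategy of Corollary~\ref{ICsmallP}: first rescale $\mpn$ to the isotropic copy $\tilde\mu_{p,n}(A):=\mu_{p,n}(\sigma_{p,n}A)$ where $\sigma_{p,n}^2=\int x_1^2\,d\mpn$, noting that $\sigma_{p,n}\sim 1$ by the same moment comparison (\ref{comp_mom}) as in the small-$p$ case, and that both $\IC$ and $\CI$ and Cheeger's inequality are affine invariant up to adjusting the constant by $\sigma_{p,n}$. On $\tilde\mu_{p,n}$ I would then verify the hypotheses of Corollary~\ref{IC_and_CI}(ii): symmetry is obvious, $\alpha$-regularity with $\alpha=1$ comes from Proposition~\ref{LogToReg} since $\mpn$ (and hence $\tilde\mu_{p,n}$) is log-concave, the $\CI(C)$ property is Corollary~\ref{CI_largeP}, and Cheeger's inequality with a universal constant is Theorem~\ref{Cheeger_largeP}. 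Plugging these into Corollary~\ref{IC_and_CI}(ii) yields $\IC(C')$ for $\tilde\mu_{p,n}$ with a universal $C'$, and transporting back via the isotropic rescaling gives $\IC(C'')$ for $\mpn$ itself, with $C''$ absolute since $\sigma_{p,n}\sim 1$ uniformly in $p\geq 2$ and $n$.

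Taking the maximum of the three constants obtained in the three regimes produces the required single universal constant $C$ valid for all $p\in[1,\infty]$ and all $n\in\N$. I do not anticipate a serious obstacle, since all the real work (the concentration inequality of Corollary~\ref{CI_largeP}, Sodin's Cheeger bound used in Theorem~\ref{Cheeger_largeP}, and the $\CI\Rightarrow\IC$ implication) has already been carried out; the only care needed is bookkeeping of affine invariance and checking that $\sigma_{p,n}$ is bounded above and below by absolute constants uniformly in $p\geq 2$, which follows from the moment equivalence (\ref{comp_mom}) between $\mpn$ and $\npn$ combined with $\int x^2\,d\nu_p\sim 1$.
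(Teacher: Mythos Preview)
Your proposal is correct and follows essentially the same route as the paper: for $p\in[1,2]$ invoke Corollary~\ref{ICsmallP}, and for $p\ge 2$ combine Corollary~\ref{CI_largeP} with Theorem~\ref{Cheeger_largeP} via Corollary~\ref{IC_and_CI}(ii) after passing to the isotropic rescaling, exactly as in the proof of Corollary~\ref{ICsmallP}. Two minor remarks: your separate treatment of $p=\infty$ via Corollary~\ref{SIClogprod} is a clean addition that the paper leaves implicit; and your parenthetical attribution is slightly off, since Theorem~\ref{Cheeger_largeP} is obtained by transporting the Bobkov--Houdr\'e Cheeger bound for $\nu^n$ through the Lipschitz map $S_{p,n}$, not from Sodin's inequality (which is used only in the $p\le 2$ case).
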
 

We conclude this section with the proof of logaritmic Sobolev--type inequality
for $\mpn$.

\begin{thm} \label{logSobolev}
Let $\Phi(x)=(2\pi)^{-1/2}\int_{x}^{\infty}\exp(-y^2/2)$ be a Gaussian distribution
function, $A\in \BB(\er^n)$  and $p\geq 2$, then
$$
\mpn(A)=\Phi(x)\ \Rightarrow\ \mpn(A+18\sqrt{2}tB_2^n)\geq \Phi(x+t)
\mbox{ for all } t>0.
$$
In particular there exists a universal constant $C$ such that 
$$
\mpn^{+}(A)\geq \frac{1}{C}\min\bigg\{\mpn(A)\sqrt{\ln\frac{1}{\mpn(A)}},
(1-\mpn(A))\sqrt{\ln\frac{1}{1-\mpn(A)}}\bigg\}.
$$
\end{thm}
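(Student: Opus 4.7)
The plan is to transport the classical Gaussian isoperimetric inequality of Borell and Sudakov--Tsirelson through the Lipschitz map $\tilde{S}_{p,n}$ from subsection \ref{Transporty}, and then read off the boundary--measure consequence by differentiating at $t=0^+$. Note that $\nu_2$ has density $\pi^{-1/2}e^{-x^2}$, i.e.\ is a centered Gaussian of variance $1/2$, so for the product measure $\nu_2^n$ the Gaussian isoperimetric inequality takes the scaled form
\[
\nu_2^n(B)=\Phi(x)\ \Longrightarrow\ \nu_2^n(B+sB_2^n)\geq \Phi\bigl(x+s\sqrt{2}\bigr)
\]
for every Borel $B\subset\er^n$ and every $s>0$.

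By the definition $\tilde{S}_{p,n}=T_{p,n}\circ W_{2,p}^n$ together with Propositions \ref{transportBpProperties}(i) and \ref{Wproperties}(i), this map transports $\nu_2^n$ onto $\mpn$; by Proposition \ref{kontr_tildespn} it is $18$--Lipschitz with respect to $|\cdot|$. Given $A$ with $\mpn(A)=\Phi(x)$, set $B:=\tilde{S}_{p,n}^{-1}(A)$, so $\nu_2^n(B)=\Phi(x)$ and $\tilde{S}_{p,n}(B+sB_2^n)\subset A+18sB_2^n$. Chaining the two displays yields
\[
\mpn(A+18sB_2^n)\geq \nu_2^n(B+sB_2^n)\geq \Phi\bigl(x+s\sqrt{2}\bigr),
\]
and the substitution $s=t/\sqrt{2}$ (after which $18s=9\sqrt{2}t\leq 18\sqrt{2}\,t$, so we may enlarge the neighborhood) gives the first assertion.

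For the second assertion, differentiate the inequality $\mpn(A+18\sqrt{2}\,tB_2^n)\geq\Phi(\Phi^{-1}(\mpn(A))+t)$ at $t=0^+$. With $u=\mpn(A)$ one obtains
\[
\mpn^+(A)\geq \frac{1}{18\sqrt{2}}\,\varphi\bigl(\Phi^{-1}(u)\bigr),
\]
where $\varphi(x)=(2\pi)^{-1/2}e^{-x^2/2}$. The map $u\mapsto\varphi(\Phi^{-1}(u))$ is symmetric about $u=1/2$ (since $\Phi(-x)=1-\Phi(x)$ and $\varphi$ is even), and the standard Mills ratio bound $\Phi(x)\leq\varphi(x)/x$ for $x\geq 1$ yields the elementary estimate $\varphi(\Phi^{-1}(u))\geq c\,u\sqrt{\ln(1/u)}$ for $u\in(0,1/2]$ with some absolute $c>0$. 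Combining these two facts, the announced lower bound holds in the small-- and large--$\mpn(A)$ regimes simultaneously, with the $\min$ appearing automatically through the symmetry of $\varphi\circ\Phi^{-1}$.

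The only mildly delicate point in this scheme is keeping track of the $\sqrt{2}$ factor produced by the non--unit variance of $\nu_2$; the rest is a routine Lipschitz--transport computation. Note that nothing in the argument used $p\geq 2$ except through Proposition \ref{kontr_tildespn}, which is where that hypothesis is essential.
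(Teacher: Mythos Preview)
Your argument is correct and follows exactly the same route as the paper: transport the Borell--Sudakov--Tsirelson Gaussian isoperimetric inequality through the Lipschitz map $\tilde{S}_{p,n}$ of Proposition \ref{kontr_tildespn}, then read off the boundary--measure bound from the standard estimate on the Gaussian isoperimetric profile $\varphi\circ\Phi^{-1}$. The only cosmetic difference is that the paper absorbs the variance mismatch by rescaling the map (considering $\tilde{S}_{p,n}(\sqrt{2}\,\cdot)$ as a $18\sqrt{2}$--Lipschitz push-forward of the canonical Gaussian), whereas you rescale the isoperimetric inequality for $\nu_2^n$ instead; your bookkeeping actually produces the sharper constant $9\sqrt{2}$ before you relax to the stated $18\sqrt{2}$.
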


\begin{proof}
By Proposition \ref{kontr_tildespn}, $\tilde{S}_{p,n}(\sqrt{2}\cdot)$
is $18\sqrt{2}$--Lipschitz and transports
the canonical Gaussian measure on $\er^n$ onto $\mpn$. Hence the first part of
theorem follows by the Gaussian isoperimetric inequality of Borell \cite{Bo2}
and Sudakov, Tsirel'son \cite{ST}. The last estimate immediately follows
by a standard estimate of the Gaussian isoperimetric function.
\end{proof}

\section{Concluding Remarks}

1. With the notion of the $\IC$ property one may associate $\IC$--domination 
of symmetric probability measures $\mu,\tilde{\mu}$ on $\er^n$: 
we say that  {\em $\mu$
is  $\IC$--dominated by $\tilde{\mu}$ with a constant $\beta$} if 
$(\mu,\Lambda_{\tilde{\mu}}^{*}(\frac{\cdot}{\beta}))$ has property $\tau$.
$\IC$--domination has the tensorization property: if $\mu_i$ are
$\IC(\beta)$--dominated by $\tilde{\mu}_i$, $1\leq i\leq n$,
then $\otimes \mu_i$ is $\IC(\beta)$--dominated by $\otimes \tilde{\mu}_i$. 
An easy modification of the proof of Corollary \ref{IC_and_CI} gives that
if $\mu$ is $\IC(\beta)$--dominated by an $\alpha$--regular measure
$\tilde{\mu}$, then
$$
\forall_{p\geq 2}\forall_{A\in {\cal B}(\er^n)}\
\mu(A)\geq \frac{1}{2}\ \Rightarrow\
1-\mu(A+c(\alpha)\beta{\cal Z}_{p}(\tilde{\mu}))\leq e^{-p}(1-\mu(A)).
$$
Following the proof of Proposition \ref{weakstr} we also get
for all $p\geq 2$,
$$
\Big(\int \big|\|x\| -\Med_{\mu}(\|x\|)\big|^pd\mu\Big)^{1/p}\leq
\tilde{c}(\alpha)\beta
\sup_{\|u\|_*\leq 1}\Big(\int |\is{u}{x}|^pd\tilde{\mu}\Big)^{1/p}.
$$

2. One may consider convex versions of properties $\CI$ and $\IC$.
We say that a symmetric probability measure $\mu$ satisfies the {\em convex
infimum convolution inequality with a constant $\beta$} if the pair
$(\mu,\Lambda_{\mu}^{*}(\frac{\cdot}{\beta}))$ has convex property $(\tau)$,
i.e. the inequality (\ref{in_infconv}) holds for all convex function
and $f$ with $\varphi(x)=\Lambda_{\mu}^{*}(x/\beta)$. Analogously
$\mu$ satisfies {\em convex concentration inequality with a constant $\beta$},
if (\ref{in_CI}) holds for all convex Borel sets $A$. We do not know if
convex $\IC$ implies convex $\CI$, however for $\alpha$--regular measures
it implies a weaker version of convex $\CI$, namely
$$
\mu(A)\geq 1/2 \ \Rightarrow\  \mu(A+c_1(\alpha)\beta {\cal Z}_{p}(\tilde{\mu}))
\geq 1-2e^{-p}
$$
and this property yields $\CWSM(c_2(\alpha)\beta)$.

From the results of \cite{Ma} one may easily deduce that the uniform 
distribution on $\{-1,1\}^n$ satisfies convex $\IC(C)$ with a universal
constant $C$.

\medskip

3. Property $\IC$ may be also investigated for nonsymmetric measures.
However in this case the natural choice of the cost function is
$\Lambda_{\tilde{\mu}}^{*}(x/\beta)$,
%%\Lambda_{\mu}^{*}(x/\beta)\Lambda_{\mu}^{*}(-x/\beta)$, 
where $\tilde{\mu}$ is the convolution of $\mu$ and the symmetric image of $\mu$.

\medskip

4. We do not know if the infimum convolution property (at least for regular measures)
implies Cheeger's inequality. If so, we would have equivalence of $\IC$ and $\CI$ + Cheeger.

\begin{tabbing}
11111111111111111111111111111111111111\=1111111111111111111111\=\kill
Rafa{\l} Lata{\l}a \>  Jakub Onufry Wojtaszczyk \\
Institute of Mathematics \> Institute of Mathematics\\
Warsaw University \> Warsaw University\\
Banacha 2, 02-097 Warszawa \> Banacha 2, 02-097 Warszawa\\
and \> Poland \\
Institute of Mathematics\> {\tt onufry@mimuw.edu.pl}\\
Polish Academy of Sciences\\
\'Sniadeckich 8\\
P.O.Box 21, 00-956 Warszawa 10\\
Poland\\
{\tt rlatala@mimuw.edu.pl}
\end{tabbing}


\begin{thebibliography}{15}
\bibitem{AAR} G.\ E.\ Andrews, R.\ Askey and R.\ Roy, {\em Special functions}, 
Encyclopedia of Mathematics and its Applications, 71,
Cambridge University Press, Cambridge, 1999.

\bibitem{AKM} S.\ Artstein-Avidan, B.\ Klartag and V.\ Milman, {\em The Santal\'o
point of a function, and a functional form of the Santal\'o inequality},
Mathematika 51 (2004), 33--48.


\bibitem{BGMN} F.\ Barthe, O.\ Guedon, S.\ Mendelson and A.\ Naor, 
{\em A probabilistic approach to the geometry of the $l\sp n\sb p$-ball}, 
Ann.\ Probab.\  33  (2005), 480--513. 


\bibitem{Bob} S.\ G.\ Bobkov, {\em Isoperimetric and analytic inequalities for log-concave probability measures},  Ann.\ Probab.  {\bf 27}  (1999), 
1903--1921. 

\bibitem{BH1} S.\ G.\ Bobkov and C.\ Houdr\'e, {\em Isoperimetric constants
for product probability measures}, Ann.\ Probab.\ 25 (1997), 184--205.

\bibitem{BH2} S.\ G.\ Bobkov and C.\ Houdr\'e, {\em Some connections between
isoperimetric and Sobolev-type inequalities}, Mem.\ Amer.\ Math.\ Soc.\
129 (1997), no. 616.

\bibitem{Bo} C.\ Borell, {\em Convex set functions in $d$-space},
Period. Math. Hungar.  6  (1975), 111--136. 
%%MR0404559

\bibitem{Bo2} C. Borell, {\em The Brunn-Minkowski inequality in Gauss space},
Invent. Math., 30 (1975), 207--216.

\bibitem{DS} J.-D., Deuschel and D.\ W.\ Stroock, {\em Large deviations}, 
Pure and Applied Mathematics, 137, Academic Press, Inc., Boston, MA, 1989. 

\bibitem{Go} N.\ Gozlan, {\em Characterization of Talagrand's like
 transportation-cost inequalities on the real line}, preprint (available at arXiv:math/0608241v1).

\bibitem{KLS} R.\ Kannan, L.\ Lov\'asz and M.\ Simonovits {\em Isoperimetric 
problems for convex bodies and a localization lemma}, Discrete Comput.\ Geom.
{\bf 13} (1995), 541--559.

\bibitem{Kl} B.\ Klartag, {\em A central limit theorem for convex sets}, 
 Invent. Math.  168  (2007), 91--131.

\bibitem{KM} B.\ Klartag and V.\ D.\ Milman,{\em Geometry of log-concave functions 
and measures}, Geom. Dedicata 112 (2005), 169--182.

\bibitem{KLO} S.\ Kwapie\'n. R.\ Lata{\l}a and K.\ Oleszkiewicz,
{\em  Comparison of moments of sums of independent random variables 
and differential inequalities}, J. Funct. Anal. 136 (1996), 258--268.

\bibitem{MT} G.G.\ Magaril-llyaev and V.M.\ Tikhomirov, 
{\em Convex analysis: theory and applications}, Translations of Mathematical
Monographs, 222, American Mathematical Society, Providence, RI, 2003.

\bibitem{Ma} B.\ Maurey, {\em Some deviation inequalities,} Geom.\ Funct.\
    Anal. 1 (1991), 188--197.

\bibitem{MS} E.\ Milman and S.\ Sodin, {\em An isoperimetric inequality for 
uniformly log-concave measures and uniformly convex bodies}, to appear in J. Funct. Anal. (available at arXiv:math/0703857v2).

\bibitem{Pa} G.\ Paouris, {\em Concentration of mass on convex bodies}, 
Geom. Funct. Anal.  16  (2006), 1021--1049. 

\bibitem{So} S.\ Sodin, {\em An isoperimetric inequality on the $l_p$ balls},
preprint (available at arXiv:math/0607398v2).

\bibitem{ST} V.N. Sudakov and B.\ S. Tsirel'son, 
{\em Extremal properties of half-spaces
for spherically invariant measures} (in Russian),  Zap. Nauchn. Sem.
L.O.M.I. 41 (1974), 14--24.

\bibitem{Ta} M.\ Talagrand, {\em A new isoperimetric inequality and the
concentration of measure phenomenon}, in Israel Seminar (GAFA), 
Lecture Notes in Math. 1469, 94--124, Springer, Berlin 1991. 



\end{thebibliography}
\end{document}